\newtheorem{theorem}{Theorem}[section]
\newtheorem{proposition}[theorem]{Proposition}
\newtheorem{lemma}[theorem]{Lemma}
\def\F{\mathbb F}
\def\cH{\mathcal H}
\def\cP{\mathcal P}
\def\fqs{\mathbb{F}_{q^2}}
\def\PG{{\rm{PG}}}
\def\ord{\mbox{\rm ord}}
\def\fq{{\mathbb F}_q}
\newcommand{\PSL}{\mbox{\rm PSL}}
\newcommand{\PGL}{\mbox{\rm PGL}}
\newcommand{\PSU}{\mbox{\rm PSU}}
\newcommand{\PGU}{\mbox{\rm PGU}}
\newcommand{\aut}{\mbox{\rm Aut}}
\newcommand{\diag}{\mbox{\rm diag}}
\title{The M\"obius function of $\PSU(3,2^{2^n})$}
\date{}
\author{Giovanni Zini}
\begin{document}

%\vspace{0.5cm}\noindent {\em Keywords}:
%Algebraic curves, algebraic function fields, positive characteristic, automorphism groups.
%\vspace{0.2cm}\noindent

%\vspace{0.5cm}\noindent {\em Subject classifications}:
%\vspace{0.2cm}\noindent  14H37, 14H05.

\begin{abstract}
Let $G$ be the simple group ${\rm PSU}(2,2^{2^n})$, $n>0$.
For any subgroup $H$ of $G$, we compute the M\"obius function $\mu_L(H,G)$ of $H$ in the subgroup lattice $L$ of $G$, and the M\"obius function $\mu_{\bar L}([H],[G])$ of $[H]$ in the poset $\bar L$ of conjugacy classes of subgroups of $G$.
For any prime $p$, we provide the Euler characteristic of the order complex of the poset of $p$-subgroups of $G$.
\end{abstract}

\maketitle

\begin{small}

{\bf Keywords:} Unitary groups, M\"obius function, subgroup lattice

{\bf 2000 MSC:} 05E15
%20D30

%\footnote{
%URL: Giovanni Zini (giovanni.zini@unimib.it).
%}

\end{small}

\section{Introduction}
The M\"obius function $\mu(H,G)$ on the subgroups of a finite group is defined recursively by $\mu(G,G)=1$ and $\sum_{K\geq H}\mu(K,G)=0$ if $H<G$. This function was used in $1936$ by Hall \cite{Hall} to enumerate $k$-tuples of elements of $G$ which generate $G$, for a given $k$.

The combinatorial and group-theoretic properties of the M\"obius function were investigated by many authors; see the paper \cite{HIO} by Hawkes, Isaacs, and \"Ozaydin. The M\"obius function is defined more generally on a locally finite poset $(\mathcal P,\leq)$ by the recursive definition $\mu(x,x)=1$, $\mu(x,y)=0$ if $x\not\leq y$, and $\sum_{x\leq z\leq y}\mu(z,y)=0$ if $x\leq y$; often, the poset taken into consideration is the subgroup lattice $L$ of a finite group $G$ ordered by inclusion.
Mann \cite{Mann,Mann2} studied $\mu(H,G)$ in the broader context of profinite groups and defined a probabilistic zeta function $P(G,s)$ associated to $G$, related to the probability of generating $G$ with $s$ elements when $G$ is positively finitely generated.

The M\"obius function on a poset $\mathcal P$ also appears in the context of topological invariants of the order simplicial complex associated to $\mathcal P$; see the works of Brown \cite{Brown1975} and Quillen \cite{Quillen}: if $\mathcal P$ is the subgroup lattice of a finite group $G$, then the reduced Euler characteristic is equal to $\mu(\{1\},G)$. This motivates the search for $\mu(\{1\},G)$ independently of the knowledge of $\mu(H,G)$ for other subgroups $H$ of $G$; see for instance \cite{Shar1,Shar2} and the references therein. Shareshian provided a formula in \cite{Shar2} for $\mu(\{1\},Sym(n))$, and computed $\mu(\{1\},G)$ in \cite{Shar1} when $G\in\{\PGL(2,q),\PSL(2,q),\PGL(3,q),\PSL(3,q),\PGU(3,q),\PSU(3,q)\}$ with $q$ odd or $G$ is a Suzuki group $Sz(2^{2h+1})$.

As already mentioned, the M\"obius function is defined for any locally finite poset.
Consider the poset $\bar L$ of conjugacy classes $[H]$ of subgroups $H$ of a finite group $G$, ordered as follows: $[H]\leq[K]$ if and only if $H$ is contained in some conjugate of $K$ in $G$.
After Hawkes, Isaacs, and \"Ozaydin \cite{HIO}, we denote by $\lambda(H,G)$ the M\"obius function $\mu([H],[G])$ in $\bar L$, while $\mu(H,G)$ is the M\"obius function in $L$.
Some attempt was done to search relations between the M\"obius functions $\mu(H,G)$ and $\lambda(H,G)$; Hawkes, Isaacs, and \"Ozaydin \cite{HIO} proved that, if $G$ is solvable, then
\begin{equation}\label{mulambda}
\mu(\{1\},G)=|G^\prime|\cdot\lambda(\{1\},G).
\end{equation}
The property \eqref{mulambda}, which we call $(\mu,\lambda)$-property, does not hold in general for non-solvable groups; see \cite{BMV}.
Pahlings \cite{Pahlings} proved that, if $G$ is solvable, then
\begin{equation}\label{generalizedmulambda}
\mu(H,G)=[N_{G^\prime}(H):H\cap G^\prime]\cdot\lambda(H,G).
\end{equation}
holds for any subgroup $H$ of $G$.
%\eqref{generalizedmulambda}is not true in general for non-solvable groups for $G\in\{\PSU(3,3),M_{12},J_2,M_{23}\}$.
The analysis of the generalized $(\mu,\lambda)$-property \eqref{generalizedmulambda}, although false in general for non-solvable groups, is of interest since it relates the M\"obius functions $\mu(H,G)$ and $\lambda(H,G)$.

A lot of work was done by several authors  about probabilistic functions for groups; see for instance \cite{Mann,Mann2,DamianLucchini,DungLucchini}.
In particular, Mann posed in \cite{Mann} a conjecture, the validity of which would imply that the sum
$$ \sum _H \frac{\mu(H,G)}{[G:H]^s} $$
over all subgroups $H<G$ of finite index of a positively finitely generated profinite group $G$ is absolutely convergent for $s$ in some right complex half-plane and, for $s\in\mathbb{N}$ large enough, represents the probability of generating $G$ with $s$ elements.
Lucchini \cite{Lucchini2010} showed that this problem can be reduced so that Mann's conjecture is reformulated as follows:
there exist two constants $c_1,c_2\in\mathbb{N}$ such that, for any finite monolithic group $G$ with non-abelian socle,
\begin{enumerate}
\item $|\mu(H,G)|\leq[G:H]^{c_1}$ for any $H<G$ such that $G=H\, soc(G)$, and
\item the number of subgroups $H<G$ of index $n$ in $G$ such that $H\, soc(G)=G$ and $\mu(H,G)\ne0$ is upper bounded by $n^{c_2}$, for any $n\in\mathbb N$.
\end{enumerate}
It seems natural to investigate this conjecture on finite monolithic groups starting by almost simple groups.
Mann's conjecture has been shown to be satisfied by the alternating and symmetric groups \cite{CL}, as well as by those families of groups $G$ for which $\mu(H,G)$ has been computed for any subgroup $H$; namely, $\PSL(2,q)$ \cite{Hall,Downs1991}, \PGL(2,q) \cite{Downs1991}, the Suzuki groups $Sz(2^{2h+1})$ \cite{DJ}, and the Ree groups $R(3^{2h+1})$ \cite{Pierro}.

In this paper, we take into consideration the three dimensional projective special unitary group $G=\PSU(3,q)$ over the field with $q=2^{2^n}$ elements, for any positive $n$ (note that $\PSU(3,q)=\PGU(3,q)$ as $3\nmid(q+1)$). In particular, the following results are obtained.
\begin{itemize}
\item We compute $\mu(H,G)$ for any subgroup $H$ of $G$, as summarized in Table \ref{tabella1}. This shows that the groups $\PSU(3,2^{2^n})$ satisfy Mann's conjecture.
\item We compute $\lambda(H,G)$ for any subgroup $H$ of $G$, as summarized in Table \ref{tabella1}. This shows that the groups $\PSU(3,2^{2^n})$ satisfy the $(\mu,\lambda)$-property, but do not satisfy the generalized $(\mu,\lambda)$-property.
\item We compute the Euler characteristic $\chi(\Delta(L_p\setminus\{1\}))$ of the order complex of the poset of non-trivial $p$-subgroups of $G$, for any prime $p$, as summarized in Table \ref{tabella2}.
\end{itemize}
For the subgroups listed in Table \ref{tabella1}, the isomorphism type determines a unique conjugacy class in $G$.

\begin{center}
\begin{table}[!ht]
\caption{Subgroups $H$ of $G=\PSU(3,q)$, $q=2^{2^n}$, with $\mu(H)\ne0$ or $\lambda(H)\ne0$}\label{tabella1}
\def\arraystretch{1.3}
\begin{tabular}{|c|c|c|c|c|}
\hline
Isomorphism type of $H$ & $|H|$ & $N_G(H)$ & $\mu(H,G)$ & $\lambda(H,G)$ \\ \hline
$G$ & $q^3(q^3+1)(q^2-1)$ & $H$ & $1$ & $1$ \\
$(E_q\, . \, E_{q^2})\rtimes C_{q^2-1}$ & $q^3(q^2-1)$ & $H$ & $-1$ & $-1$ \\
$\PSL(2,q)\times C_{q+1}$ & $q(q^2-1)(q+1)$ & $H$ & $-1$ & $-1$ \\
$(C_{q+1}\times C_{q+1})\rtimes Sym(3)$ & $6(q+1)^2$ & $H$ & $-1$ & $-1$ \\
$C_{q^2-q+1}\rtimes C_3$ & $3(q^2-q+1)$ & $H$ & $-1$ & $-1$ \\
$E_q\rtimes C_{q^2-1}$ & $q(q^2-1)$ & $H$ & $1$ & $1$ \\
$(C_{q+1}\times C_{q+1})\rtimes C_2$ & $2(q+1)^2$ & $H$ & $1$ & $1$ \\
$Sym(3)$ & $6$ & $Sym(3)\times C_{q+1}$ & $q+1$ & $1$ \\
$C_3$ & $3$ & $C_{q^2-1}\rtimes C_2$ & $\frac{2(q^2-1)}{3}$ & $1$ \\
$C_2$ & $2$ & $(E_q\, . \, E_{q^2}) \rtimes C_{q+1}$ & $-\frac{q^3(q+1)}{2}$ & $-1$ \\
\hline
\end{tabular}
\end{table}
\end{center}

%\begin{center}
%\begin{table}%[!ht]
%\caption{Euler characteristic of the order complex of the poset of proper $p$-subgroups of $G$}\label{tabella2}
%\def\arraystretch{1.3}
%\begin{tabular}{|c|c|}
%\hline
%prime $p$ & $\chi(\Delta(L_p\setminus\{1\}))$ \\ \hline
%$p\nmid|G|$ & $0$ \\
%$p=2$ & $q^3+1$ \\
%$p\mid(q+1)$ & $-\frac{q^6-2q^5-q^4+2q^3-3q^2}{3}$ \\
%$p\mid(q-1)$ & $\frac{q^6+q^3}{2}$ \\
%$p\mid(q^2-q+1)$ & $-\frac{q^6+q^5-q^4-q^3}{3}$ \\
%\hline
%\end{tabular}
%\end{table}
%\end{center}

\begin{center}
\begin{table}[!ht]
\caption{Euler characteristic of the order complex of the poset of proper $p$-subgroups of $G$}\label{tabella2}
\def\arraystretch{1.5}
\begin{tabular}{|c|c c c c c|}
\hline
prime $p$ & $p\nmid|G|$ & $p=2$ & $p\mid(q+1)$ & $p\mid(q-1)$ & $p\mid(q^2-q+1)$ \\ \hline
$\chi(\Delta(L_p\setminus\{1\}))$ & $0$ & $q^3+1$ & $-\frac{q^6-2q^5-q^4+2q^3-3q^2}{3}$ & $\frac{q^6+q^3}{2}$ & $-\frac{q^6+q^5-q^4-q^3}{3}$ \\
\hline
\end{tabular}
\end{table}
\end{center}

The paper is organized as follows.
Section \ref{sec:preliminari} contains preliminary results on the M\"obius functions $\mu(H,G)$ and $\lambda(H,G)$ and the relation between the M\"obius function and the Euler characteristic of the order complex; this section contains also preliminary results on the groups $G=\PSU(3,2^{2^n})$, whose elements are described geometrically in their action on the Hermitian curve associated to $G$.
Sections \ref{sec:mobius} and \ref{sec:lambda} is devoted to the determination of $\mu(H,G)$ and $\lambda(H,G)$, respectively, for any subgroup $H$ of $G$.
Section \ref{sec:euler} provides the Euler characteristic of the order complex of the poset of proper $p$-subgroups of $G$, for any prime $p$.

\section{Preliminary results}\label{sec:preliminari}

Let $(\cP,\leq)$ be a finite poset. The M\"obius function $\mu_{\cP}:\cP\times \cP\rightarrow\mathbb{Z}$ is defined recursively as follows:
$\mu_\cP (x,y)=0$ if $x\not\leq y$; $\mu_\cP(x,x)=1$; and
$$ \mu_\cP (x,y)=0\quad\textrm{if}\quad x\not\leq y;\qquad \mu_\cP (x,x)=1;\qquad \sum_{x\leq z\leq y}\mu_\cP(z,y)=0 \quad\textrm{if}\quad x<y.$$
If $x<y$, then $\mu_\cP(x,y)$ van be equivalently defined by $\sum_{x\leq z\leq y}\mu_\cP(x,z)=0$.

To the poset $\cP$ we can associate a simplicial complex $\Delta(\cP)$ whose vertices are the elements of $\cP$ and whose $i$-dimensional faces are the chains $a_0<\cdots<a_i$ of length $i$ in $\cP$; $\Delta(\cP)$ is called the \emph{order complex} of $\cP$.
Provided that $\cP$ has a least element $0$, the Euler characteristic of the order complex of $\cP\setminus\{0\}$ is computed as follows (see \cite[Proposition 3.8.6]{Stan}):
$$ \chi(\Delta(\cP\setminus\{0\})) = -\sum_{x\in\cP\setminus\{0\}}\mu_{\cP}(0,x). $$

Given a finite group $G$, we will consider the following two M\"obius functions associated to $G$.
\begin{itemize}
\item The M\"obius function on the subgroup lattice $L$ of $G$, ordered by inclusion.
We will denote $\mu_L(H,G)$ simply by $\mu(H)$.
\item The M\"obius function on the poset $\bar L$ of conjugacy classes $[H]$ of subgroups $H$ of $G$, ordered as follows: $[H]\leq[K]$ if and only if $H$ is contained in the conjugate $gKg^{-1}$ for some $g\in G$.
We will denote $\mu_{\bar L}([H],[G])$ simply by $\lambda(H)$.
\end{itemize}
Two facts will be used to compute $\mu(H)$.
The first easy fact is that, if $H$ and $K$ are conjugate in $G$, then $\mu(H)=\mu(K)$.
The second fact is due to Hall \cite[Theorem 2.3]{Hall}, and is stated in the following lemma.
\begin{lemma}\label{intersezione}
If $H< G$ satisfies $\mu(H)\ne0$, then $H$ is the intersection of maximal subgroups of $G$.
\end{lemma}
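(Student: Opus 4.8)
The plan is to prove the contrapositive: assuming $H<G$ is \emph{not} the intersection of maximal subgroups of $G$, I will show $\mu(H)=0$. The starting point is that the subgroup lattice $L$ is genuinely a lattice, with meet given by intersection and join by the generated subgroup, so every interval $[H,G]=\{K : H\le K\le G\}$ is itself a finite lattice with bottom $H$ and top $G$. Since the M\"obius function depends only on the interval, $\mu(H)=\mu_{[H,G]}(H,G)$, and I can work entirely inside $[H,G]$ and evaluate $\mu_{[H,G]}(H,G)$ in terms of the coatoms of this interval by a crosscut-type formula.

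Concretely, I would carry out the following steps. First, since $H<G$, the subgroup $H$ is contained in at least one maximal subgroup, so the family $\cM$ of maximal subgroups of $G$ containing $H$ is nonempty; note that $\cM$ is exactly the set of coatoms of $[H,G]$. Put $M=\bigcap_{K\in\cM}K$, the intersection of all maximal subgroups of $G$ containing $H$. The hypothesis that $H$ is not an intersection of maximal subgroups translates precisely into $H\lneq M$ (indeed $H$ equals some intersection of maximals if and only if $H=M$, since any maximal subgroup occurring in such an intersection must contain $H$ and hence lie in $\cM$). Next I invoke Rota's crosscut theorem for the finite lattice $[H,G]$: writing the meet as intersection, it gives
\[ \mu(H)=\mu_{[H,G]}(H,G)=\sum_{\substack{S\subseteq\cM\\ \bigcap_{K\in S}K=H}}(-1)^{|S|}, \]
where the empty set contributes the meet $G\ne H$ and so does not appear. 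Finally, for every $S\subseteq\cM$ one has $\bigcap_{K\in S}K\supseteq\bigcap_{K\in\cM}K=M\gneq H$, so no subset $S$ satisfies $\bigcap_{K\in S}K=H$. The index set of the sum is therefore empty and $\mu(H)=0$, which is the desired contrapositive.

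The step I expect to be the real content—and hence the main obstacle—is the crosscut evaluation itself, since it is what converts the recursive definition of $\mu$ into a sum over meets of coatoms; the remaining steps are bookkeeping. If one prefers not to quote the crosscut theorem as a black box, I would prove the single instance required directly. Consider the map $c(K)=\bigcap\{L\in\cM:L\ge K\}$ on the interval $[H,G]$; it is a closure operator satisfying $c(H)=M\gneq H$, so $H$ is not closed. One then evaluates $\mu(H)$ by grouping the subgroups $K\in[H,G]$ according to which coatoms contain them, i.e.\ according to $c(K)$, and performing inclusion--exclusion over the Boolean lattice of subsets of $\cM$; a sign-reversing pairing on $\cM$ shows that the contributions cancel unless $H$ is itself a meet of coatoms. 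Establishing this cancellation—equivalently, checking that $c$ is a closure operator and that the induced pairing is sign-reversing—is the delicate point, but it becomes routine once the coatoms of $[H,G]$ have been identified as the maximal subgroups of $G$ containing $H$.
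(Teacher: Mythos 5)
Your proof is correct. The paper gives no proof of this lemma at all---it simply quotes it as Hall's Theorem 2.3 \cite{Hall}---and your argument (pass to the interval $[H,G]$, observe its coatoms are exactly the maximal subgroups of $G$ containing $H$, and apply the coatom/dual form of Rota's crosscut theorem, cf.\ the dual of \cite[Corollary 3.9.4]{Stan}, so that $\mu(H)=\sum_{S\subseteq\cM,\ \bigcap S=H}(-1)^{|S|}$ is an empty sum whenever $H\lneq\bigcap_{K\in\cM}K$) is precisely the standard proof of that classical fact, so it matches the intended justification.
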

For any prime $p$, let $L_p$ be the subposet of $L$ given by all $p$-subgroups of $G$, so that
\begin{equation}\label{caratteristica} \chi(\Delta(L_p\setminus\{1\})) = -\sum_{H\in L_p\setminus\{1\}} \mu_{L_p}(\{1\},H).
\end{equation}
%When the order of $G$ is not divisible by $p$, the summation in Equation \eqref{caratteristica} is meant to be zero, so that $\chi(\emptyset)=0$.
%Note that $\mu_{L_p}$ is equal to $\mu_{L}$ on $L_p\times L_p$.
By a result of Brown \cite{Brown1975}, $\chi(\Delta(L_p\setminus\{1\}))$ is congruent to $1$ modulo the order $|G|_p$ of a Sylow $p$-subgroup of $G$.
In order to compute explicitly $\chi(\Delta(L_p\setminus\{1\}))$ we will use the following result of Hall \cite[Equation (2.7)]{Hall}:
\begin{lemma}\label{pgruppi}
Let $H$ be a $p$-group of order $p^r$.
If $H$ is not elementary abelian, then $\mu_{L_p}(\{1\},H)=0$. If $H$ is elementary abelian, then $\mu_{L_p}(\{1\},H)=(-1)^r p^{\binom{r}{2}}$.
\end{lemma}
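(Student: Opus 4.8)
The plan is to reduce $\mu_{L_p}(\{1\},H)$ to the M\"obius function of the subgroup lattice of $H$ and then to handle the two cases separately. Since $H$ is a $p$-group, every subgroup of $H$ is again a $p$-group, so the interval $\{K\in L_p:\{1\}\leq K\leq H\}$ is precisely the full subgroup lattice $L(H)$ of $H$; consequently $\mu_{L_p}(\{1\},H)=\mu_{L(H)}(\{1\},H)$, and it suffices to compute the M\"obius function between the bottom and top of $L(H)$.

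I would first dispose of the case where $H$ is not elementary abelian. The maximal subgroups of a $p$-group $H$ are exactly its subgroups of index $p$, and each of them contains the Frattini subgroup $\Phi(H)$; indeed $\Phi(H)$ equals the intersection of all maximal subgroups of $H$, and $\Phi(H)=\{1\}$ precisely when $H$ is elementary abelian. Hence, if $H$ is not elementary abelian, then $\Phi(H)\neq\{1\}$ and no intersection of maximal subgroups of $H$ can be trivial. Applying Hall's theorem in its general form (the statement of Lemma \ref{intersezione} holds for an arbitrary finite group; I would apply it to the group $H$ and its trivial subgroup), I would conclude that $\mu_{L(H)}(\{1\},H)=0$.

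For the elementary abelian case the plan is to identify $H$ with the vector space $\mathbb{F}_p^{\,r}$, so that $L(H)$ becomes the lattice of $\mathbb{F}_p$-subspaces. Since the interval $[\{0\},W]$ is isomorphic to the subspace lattice of $W$, the value $\mu_{L(H)}(\{0\},W)=\mu_k$ depends only on $k=\dim W$; writing $\binom{r}{k}_p$ for the number of $k$-dimensional subspaces (the Gaussian binomial coefficient), the defining recursion becomes
\begin{equation*}
\mu_0=1,\qquad \sum_{k=0}^{r}\binom{r}{k}_p\,\mu_k=0\ \ (r\geq 1).
\end{equation*}
Since the coefficient of $\mu_r$ in the $r$-th equation is $1$, this determines the $\mu_k$ uniquely, so it is enough to verify that $\mu_k=(-1)^k p^{\binom{k}{2}}$ is a solution. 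This is exactly the $q$-binomial theorem evaluated at $q=p$, $x=-1$:
\begin{equation*}
\sum_{k=0}^{r}\binom{r}{k}_p(-1)^k p^{\binom{k}{2}}=\prod_{i=0}^{r-1}\bigl(1-p^{\,i}\bigr),
\end{equation*}
whose right-hand side vanishes for $r\geq 1$ because of the factor with $i=0$. Taking $k=r$ then yields the stated value $(-1)^r p^{\binom{r}{2}}$.

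The routine ingredients here are the Frattini characterization of non-elementary-abelian $p$-groups and the enumeration of subspaces by Gaussian binomials. The one genuinely substantive step will be the combinatorial identity, the $q$-analogue of $\sum_k(-1)^k\binom{r}{k}=0$; I expect checking that the proposed closed form solves the Gaussian recursion (equivalently, invoking the $q$-binomial theorem) to be the crux, while everything else is bookkeeping.
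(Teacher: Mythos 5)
Your proof is correct, and note that the paper itself gives no proof of this lemma but quotes it from Hall \cite[Equation (2.7)]{Hall}; your argument is essentially the classical one behind that citation. All the steps check out: since every subgroup of a $p$-group is a $p$-group, the interval $[\{1\},H]$ in $L_p$ is the full subgroup lattice of $H$, so the reduction $\mu_{L_p}(\{1\},H)=\mu_{L(H)}(\{1\},H)$ is legitimate; the non-elementary-abelian case follows from Lemma \ref{intersezione} applied inside $H$ (valid since $\{1\}<H$ there) together with the fact that every intersection of maximal subgroups contains $\Phi(H)\neq\{1\}$; and in the elementary abelian case the recursion $\sum_{k=0}^{r}\binom{r}{k}_p\mu_k=0$ (using the paper's equivalent ``lower'' form of the M\"obius recursion) determines the $\mu_k$ uniquely because the coefficient of $\mu_r$ is $\binom{r}{r}_p=1$, and the proposed solution is verified by the $q$-binomial theorem, $\sum_{k=0}^{r}\binom{r}{k}_p(-1)^k p^{\binom{k}{2}}=\prod_{i=0}^{r-1}\bigl(1-p^{i}\bigr)=0$ for $r\geq 1$.
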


We describe now the group $G$ which will be considered in the following sections.
Let $n$ be a positive integer, $q=2^{2^n}$, $\fq$ be the finite field with $q$ element, and $\bar{\mathbb{F}}_{q}$ be the algebraic closure of $\fq$.
Let $\mathcal{U}$ be a non-degenerate unitary polarity of the plane $\PG(2,q^2)$ over $\mathbb{F}_{q^2}$, and $\cH_q\subset\PG(2,\bar{\mathbb{F}}_{q})$ be the Hermitian curve defined by $\mathcal U$.
The following homogeneous equations define models for $\cH_q$ which are projectively equivalent over $\mathbb{F}_{q^2}$:
\begin{equation} \label{M1}
X^{q+1}+Y^{q+1}+Z^{q+1}=0;
\end{equation}
\begin{equation} \label{M2}
X^{q}Z+XZ^{q}-Y^{q+1}=0.
\end{equation}
The models \eqref{M1} and \eqref{M2} are called the Fermat and the Norm-Trace model of $\cH_q$, respectively.
The set of $\mathbb{F}_{q^2}$-rational points of $\cH_q$ is denoted by $\cH_q(\fqs)$, and consists of the $q^3+1$ isotropic points of $\mathcal U$.
The full automorphism group $\aut(\cH_q)$ of $\cH_q$ is defined over $\fqs$, and coincides with the unitary subgroup $\PGU(3,q)$ of $\PGL(3,q^2)$ stabilizing $\cH_q(\fqs)$, of order $|\PGU(3,q)|=q^3(q^3+1)(q^2-1)$.

The combinatorial properties of $\cH_q(\fqs)$ can be found in \cite{HP}.
In particular, any line $\ell$ of $\PG(2,q^2)$ has either $1$ or $q+1$ common points with $\cH_q(\fqs)$, that is, $\ell$ is either a tangent line or a chord of $\cH_q(\fqs)$; in the former case $\ell$ contains its pole with respect to $\mathcal U$, in the latter case $\ell$ doesn't.
Also, $\PGU(3,q)$ acts $2$-transitively on $\cH_q(\fqs)$ and transitively on $\PG(2,q^2)\setminus\cH_q$; $\PGU(3,q)$ acts transitively also on the non-degenerate self-polar triangles $T=\{P_1,P_2,P_3\}\subset\PG(2,q^2)\setminus\cH_q$ with respect to $\mathcal U$.
Recall that, if $\sigma\in\PGU(3,q)$ stabilizes a point $P\in\PG(2,q^2)$, then $\sigma$ stabilizes also the polar line of $P$ with respect to $\mathcal U$, and viceversa.

The curve $\cH_q$ is non-singular and $\mathbb{F}_{q^2}$-maximal of genus $g=\frac{q(q-1)}{2}$, that is, the size of $\cH_q(\fqs)$ attains the Hasse-Weil upper bound $q^2+1+2gq$. This implies that $\cH_q$ is $\mathbb{F}_{q^4}$-minimal and $\mathbb{F}_{q^6}$, so that $\cH_q(\mathbb{F}_{q^4})\setminus\cH_q(\fqs)=\emptyset$ and $|\cH_q({\mathbb{F}_{q^6}})\setminus\cH_q(\fqs)|=q^6+q^5-q^4-q^3$.
Let $\Phi_{q^2}$ be the Frobenius map $(X,Y,Z)\mapsto(X^{q^2},Y^{q^2},Z^{q^2})$ over $\PG(2,\bar{\mathbb{F}}_{q^2})$; then the $\mathbb{F}_{q^6}\setminus\fqs$-rational points of $\cH_q$ split into $\frac{q^6+q^5-q^4-q^3}{3}$ non-degenerate triangles $\{P,\Phi_{q^2}(P),\Phi_{q^2}^2(P)\}$. The group $\PGU(3,q)$ is transitive on such triangles.

Since $3\nmid(q+1)$, we have $\PGU(3,q)=\PSU(3,q)$; henceforth, we denote by $G$ the simple group $\PSU(3,q)$.
The following classification of subgroups of $G$ goes back to Hartley \cite{Hartley}; here we use that $\log_2(q)$ has no odd divisors different from $1$.
The notation $S_2$ stands for a Sylow $2$-subgroup of $G$, which is a non-split extension $E_q\, . \, E_{q^2}$ of its elementary abelian center of order $q$ by an elementary abelian group of order $q^2$.

\begin{theorem}\label{Hartley} Let $n>0$, $q=2^{2^n}$, and $G=\PSU(3,q)$. Up the conjugation, the maximal subgroups of $G$ are the following.
\begin{itemize}
\item[(i)] The stabilizer $M_1(P)\cong S_2\rtimes C_{q^2-1}$ of a point $P\in\cH_q(\fqs)$, of order $q^3(q^2-1)$.
\item[(ii)] The stabilizer $M_2(P)\cong \PSL(2,q)\times C_{q+1}$ of a point $P\in\PG(2,q^2)\setminus\cH_q(\fqs)$, of order $q(q^2-1)(q+1)$.
\item[(iii)] The stabilizer $M_3(T)\cong (C_{q+1}\times C_{q+1})\rtimes Sym(3)$ of a non-degenerate self-polar triangle $T=\{P,Q,R\}\subset\PG(2,q^2)\setminus\cH_q$ with respect to $\mathcal U$, of order $6(q+1)^2$.
\item[(iv)] The stabilizer $M_4(T) \cong C_{q^2-q+1}\rtimes C_3$ of a triangle $T=\{P,\Phi_{q^2}(P),\Phi_{q^2}^2(P)\}\subset\cH_{q}(\F_{q^6})\setminus\cH_{q}(\F_{q^2})$, of order $3(q^2-q+1)$.
\end{itemize}
\end{theorem}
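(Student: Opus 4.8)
The plan is to obtain the statement from Hartley's complete determination \cite{Hartley} of the subgroups of $\PSU(3,q)$ in even characteristic, and then to extract those that are maximal in the special case $q=2^{2^n}$, identifying each geometrically through the action of $G=\PSU(3,q)=\PGU(3,q)$ on $\PG(2,q^2)$ and on the Hermitian curve $\cH_q$. Two arithmetic features of this case will be used repeatedly: that $3\nmid(q+1)$, so that the tori attached to the candidate subgroups occur with their full orders $q^2-1$, $(q+1)^2$ and $q^2-q+1$; and that $\log_2(q)=2^n$ is a power of $2$, hence has no odd divisor greater than $1$.

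First I would realise the four geometric families as point- and triangle-stabilisers. A subgroup fixing a point fixes, by the polarity $\mathcal U$, also its polar line, and two cases arise according to whether the point is isotropic. An isotropic point $P\in\cH_q(\fqs)$ gives the maximal parabolic $M_1(P)\cong S_2\rtimes C_{q^2-1}$, the Levi torus having full order $q^2-1$ since $3\nmid(q+1)$. A non-isotropic point $P$ gives $M_2(P)$, which acts on the $q+1$ points cut by its polar chord on $\cH_q$ as $\PGU(2,q)\cong\PSL(2,q)$ (recall $q$ is even), while the kernel of this action is the cyclic factor $C_{q+1}$; hence $M_2(P)\cong\PSL(2,q)\times C_{q+1}$. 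The imprimitive subgroups stabilising a triangle give the remaining two families: the stabiliser of a non-degenerate self-polar triangle is $M_3(T)\cong(C_{q+1}\times C_{q+1})\rtimes\Sym(3)$, with the diagonal torus permuted by $\Sym(3)$ on the three vertices, while the stabiliser of a Frobenius triangle $\{P,\Phi_{q^2}(P),\Phi_{q^2}^2(P)\}$ is the normaliser $M_4(T)\cong C_{q^2-q+1}\rtimes C_3$ of a non-split maximal torus. That each is the \emph{full} stabiliser follows by comparing the stated orders with the orbit lengths recorded earlier in this section, namely $q^3+1$ isotropic points, $q^2(q^2-q+1)$ non-isotropic points, and the corresponding triangle counts, all of which multiply up to $|G|=q^3(q^3+1)(q^2-1)$.

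It then remains to rule out every other maximal subgroup, and this is where the shape $q=2^{2^n}$ is decisive. Beyond the four geometric families, Hartley's list contains subfield subgroups $\PSU(3,q_0)$ with $q=q_0^r$ and finitely many exceptional subgroups (the Hessian-type groups, $\PSL(2,7)$, $A_6$ and their relatives). A genuine subfield subgroup of $\PSU(3,q)$ requires $r$ to be an odd prime; since $\log_2(q)=2^n$ has no odd divisor larger than $1$, no such $q_0$ exists. The only remaining subfield-type embedding is the orthogonal one coming from the index-$2$ subfield $\fq\subset\fqs$, but in even characteristic a symmetric bilinear form on a $3$-dimensional space is degenerate and its orthogonal group fixes the radical point, so this $\PSL(2,q)$ is already contained in a non-isotropic point stabiliser $M_2$. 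The exceptional subgroups are then excluded by the congruence and order conditions that control their embedding, none of which is met when $q=2^{2^n}$, or else by observing that each such candidate lies inside one of the $M_i$.

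I expect the disposal of the exceptional subgroups to be the main obstacle: it requires going through Hartley's sporadic candidates one at a time, checking in each case the divisibility and congruence constraints on $q$ under which the subgroup can embed maximally, and confirming that these fail for $q$ a power of $2$ of the form $2^{2^n}$ (or that the candidate is properly contained in a geometric subgroup). Once the subfield and exceptional subgroups are eliminated, maximality of $M_1,\dots,M_4$ is immediate from the classification, since each stabilises an object lying in a single $G$-orbit and, by a routine comparison of orders, none of the four is contained in another.
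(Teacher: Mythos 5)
Your proposal is correct and follows the same route as the paper, which offers no independent proof of this theorem but simply quotes Hartley's classification \cite{Hartley}, noting exactly the two arithmetic facts you exploit: that $3\nmid(q+1)$ (so $\PSU(3,q)=\PGU(3,q)$ and the tori have full orders $q^2-1$, $(q+1)^2$, $q^2-q+1$) and that $\log_2(q)=2^n$ has no odd divisor greater than $1$ (which kills the subfield subgroups $\PSU(3,q_0)$, $q=q_0^r$ with $r$ an odd prime). Your additional details --- the geometric identification of the four stabilizers, the nucleus argument placing the characteristic-two orthogonal subgroup inside $M_2$, and the case-by-case disposal of Hartley's exceptional candidates --- are a sound filling-in of what the paper leaves to the citation.
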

For a detailed description of the maximal subgroups of $G$, both from an algebraic and a geometric point of view, we refer to \cite{GSX,MZ,MZNotFixing}.

In our investigation it is useful to know the geometry of the elements of $\PGU(3,q)$ on $\PG(2,\bar{\mathbb{F}}_{q})$, and in particular on $\cH_q(\mathbb{F}_{q^2})$. This can be obtained as a corollary of Theorem \ref{Hartley}, and is stated in Lemma $2.2$ with the usual terminology of collineations of projective planes; see \cite{HP}. In particular, a linear collineation $\sigma$ of $\PG(2,\bar{\mathbb{F}}_q)$ is a $(P,\ell)$-\emph{perspectivity}, if $\sigma$ preserves each line through the point $P$ (the \emph{center} of $\sigma$), and fixes each point on the line $\ell$ (the \emph{axis} of $\sigma$). A $(P,\ell)$-perspectivity is either an \emph{elation} or a \emph{homology} according to $P\in \ell$ or $P\notin\ell$.
Lemma \ref{classificazione} was obtained in \cite{MZ} in a more general form (i.e., for any prime power $q$).
\begin{lemma}\label{classificazione}
For a nontrivial element $\sigma\in G=\PSU(3,q)$, $q=2^{2^n}$, one of the following cases holds.
\begin{itemize}
\item[(A)] ${\rm ord}(\sigma)\mid(q+1)$ and $\sigma$ is a homology, with center $P\in\PG(2,q^2)\setminus\cH_q$ and axis $\ell_P$ which is a chord of $\cH_q(\mathbb{F}_{q^2})$; $(P,\ell_P)$ is a pole-polar pair with respect to $\mathcal U$.
\item[(B)] $2\nmid{\rm ord}(\sigma)$ and $\sigma$ fixes the vertices $P_1,P_2,P_3$ of a non-degenerate triangle $T\subset\PG(2,q^6)$.
\begin{itemize}
\item[(B1)] $\ord(\sigma)\mid(q+1)$, $P_1,P_2,P_3\in\PG(2,q^2)\setminus\cH_q$, and the triangle $T$ is self-polar with respect to $\mathcal U$.
\item[(B2)] $\ord(\sigma)\mid(q^2-1)$ and $\ord(\sigma)\nmid(q+1)$; $P_1\in\PG(2,q^2)\setminus\cH_q$ and $P_2,P_3\in\cH_q(\mathbb{F}_{q^2})$.
\item[(B3)] $\ord(\sigma)\mid(q^2-q+1)$ and $P_1,P_2,P_3\in\cH_q(\mathbb{F}_{q^6})\setminus\cH_q(\mathbb{F}_{q^2})$.
\end{itemize}
\item[(C)] ${\rm ord}(\sigma)=2$; $\sigma$ is an elation with center $P\in\cH_q(\mathbb{F}_{q^2})$ and axis $\ell_P$ which is tangent to $\cH_q$ at $P$, such that $(P,\ell_P)$ is a pole-polar pair with respect to $\mathcal U$.
\item[(D)] ${\rm ord}(\sigma)=4$; $\sigma$ fixes a point $P\in\cH_q(\mathbb{F}_{q^2})$ and a line $\ell_P$ which is tangent to $\cH_q$ at $P$, such that $(P,\ell_P)$ is a pole-polar pair with respect to $\mathcal U$.
\item[(E)] $\ord(\sigma)=2d$ where $d$ is a nontrivial divisor of $q+1$; $\sigma$ fixes two points $P\in\cH_q(\mathbb{F}_{q^2})$ and $Q\in\PG(2,q^2)\setminus\cH_q$, the polar line $PQ$ of $P$, and the polar line of $Q$ which passes through $P$.
\end{itemize}
\end{lemma}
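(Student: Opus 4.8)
The plan is to lift $\sigma$ to an element $\hat\sigma$ of $\mathrm{GU}(3,q)\le\GL(3,\bar{\mathbb F}_q)$ acting on $V=\bar{\mathbb F}_q^{\,3}$ and preserving the Hermitian form $h$ attached to $\mathcal{U}$, and to read off the geometry of $\sigma$ from the eigenvalue and Jordan structure of $\hat\sigma$. Writing the Jordan decomposition $\hat\sigma=\hat\sigma_s\hat\sigma_u$ with $\hat\sigma_s$ semisimple, $\hat\sigma_u$ unipotent and $[\hat\sigma_s,\hat\sigma_u]=1$, the first observation is that, since $q$ is even, each of $q-1$, $q+1$, $q^2-1$ and $q^2-q+1$ is odd; hence $\hat\sigma_s$ has odd order while $\hat\sigma_u$ has order a power of $2$, and the three regimes ``$\hat\sigma_u=1$'', ``$\hat\sigma_s$ central'' and ``both parts non-trivial'' will produce exactly cases (A)--(B), (C)--(D) and (E). The key tool throughout is the isotropy criterion: if $v$ is an eigenvector of $\hat\sigma$ with eigenvalue $\lambda$, then $h(\hat\sigma v,\hat\sigma v)=\lambda^{q+1}h(v,v)$ forces $h(v,v)=0$ whenever $\lambda^{q+1}\ne1$; thus eigenvectors whose eigenvalue is not a $(q+1)$-th root of unity are isotropic, i.e.\ their points lie on $\cH_q$, while the remaining eigenlines are non-isotropic and lie off $\cH_q$. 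Moreover the form pairs $\lambda$ with $\lambda^{-q}$, so the multiset of eigenvalues of $\hat\sigma_s$ is invariant under $\lambda\mapsto\lambda^{-q}$.

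For the semisimple case ($\hat\sigma_u=1$) I would enumerate the eigenvalue patterns allowed by this involution on the three eigenvalues. If all three satisfy $\lambda^{q+1}=1$, the element lies in a torus of order dividing $q+1$ and its eigenlines are non-isotropic; a repeated eigenvalue is forced to be a $(q+1)$-th root of unity (a two-dimensional isotropic subspace cannot exist in a non-degenerate Hermitian plane), so after scaling we reach $(\lambda,1,1)$ with $\lambda\ne1$, giving a homology whose axis is the non-degenerate fixed line --- a chord meeting $\cH_q(\F_{q^2})$ in $q+1$ points --- and whose center is the polar point off $\cH_q$: this is case (A). When the three eigenvalues are distinct we get three non-isotropic fixed points forming a self-polar triangle and odd order dividing $q+1$, which is (B1). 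If exactly one eigenvalue $\mu$ satisfies $\mu^{q+1}=1$ and the other two form a genuine pair $\{\lambda,\lambda^{-q}\}$ with $\lambda\in\F_{q^2}^{\,*}$ and $\lambda^{q+1}\ne1$ (Galois stability over $\F_{q^2}$ forces $\lambda\in\F_{q^2}^{\,*}$), the criterion puts two fixed points on $\cH_q(\F_{q^2})$ and the third off the curve, with order dividing $q^2-1$ but not $q+1$: this is (B2). Finally, if the eigenvalues form a single orbit $\{\lambda,\lambda^{q^2},\lambda^{q^4}\}\subset\F_{q^6}$ under $x\mapsto x^{q^2}$ with $\lambda^{q^3+1}=1$, then $\lambda^{-q}=\lambda^{q^4}$ shows the pattern is form-invariant, $\lambda^{q+1}\ne1$ forces all three eigenpoints onto $\cH_q(\F_{q^6})\setminus\cH_q(\F_{q^2})$, they are permuted cyclically by $\Phi_{q^2}$, and modulo the central torus of order $q+1$ the order divides $q^2-q+1$: this is (B3).

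For the unipotent case ($\hat\sigma_s$ central, $\hat\sigma_u\ne1$) I would use that in characteristic $2$ a non-trivial unipotent $\hat\sigma_u$ in $\mathrm{GU}(3,q)$ has Jordan type $(2,1)$ or $(3)$. Type $(2,1)$ is a unitary transvection: it has order $2$, its center is an isotropic fixed point and its axis the tangent there, yielding the elation of case (C). The regular type $(3)$ has order $4$, fixing a unique isotropic point and the tangent line at it as a non-pointwise-fixed flag, giving case (D). In the mixed case both parts are non-trivial; since $\hat\sigma_u$ commutes with $\hat\sigma_s$ it preserves the eigenspaces of $\hat\sigma_s$, and a non-trivial unipotent element cannot preserve a decomposition of $V$ into pairwise distinct eigenlines (on each line it would act as a scalar), so $\hat\sigma_s$ has a repeated, hence $(q+1)$-th root of unity, eigenvalue and is a homology of some order $d\mid q+1$, $d>1$. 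Then $\hat\sigma_u$ restricts to a transvection on the non-degenerate fixed plane of $\hat\sigma_s$; Jordan type $(3)$ is impossible here, so $\hat\sigma_u$ is an involution, its center $P$ is isotropic (on $\cH_q$), the homology center $Q$ is off the curve, and $\ord(\sigma)=\mathrm{lcm}(2,d)=2d$. The axis of the homology is the polar of $Q$ through $P$ and the axis of the elation part is the polar of $P$ through $Q$, which is case (E).

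The main obstacle is the isotropy/incidence bookkeeping rather than any single hard estimate: one must verify, for each eigenvalue pattern, precisely which eigenpoints lie on $\cH_q$ and reconstruct the pole--polar incidences claimed in (A)--(E), and one must check completeness --- that the five patterns exhaust all elements and that no further orders arise. The most delicate point is the mixed case (E): showing that the $2$-part of an element with non-trivial semisimple part is necessarily an involution (never of order $4$), and that the fixed isotropic point, the fixed non-isotropic point and their polar lines sit in exactly the described configuration. Since the analogous statement for arbitrary prime power $q$ is established in \cite{MZ}, an alternative is to specialize that result to $q=2^{2^n}$, where the oddness of $q\pm1$, $q^2-1$ and $q^2-q+1$ removes the cases that do not occur in even characteristic.
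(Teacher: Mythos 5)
Your proof is essentially correct, but it takes a genuinely different route from the paper, which in fact contains no proof of Lemma~\ref{classificazione} at all: the author simply remarks that the lemma can be obtained as a corollary of the classification of maximal subgroups (Theorem~\ref{Hartley}, going back to Hartley) and cites \cite{MZ}, where the statement is proved in greater generality for arbitrary prime powers $q$. Your argument --- lifting $\sigma$ to $\hat\sigma\in{\rm GU}(3,q)$, splitting off the Jordan decomposition $\hat\sigma=\hat\sigma_s\hat\sigma_u$ (where the oddness of $q\pm1$, $q^2-1$ and $q^2-q+1$ identifies the $2$-part with the unipotent part), and reading the geometry from the eigenvalue pattern via the isotropy criterion $h(v,v)=\lambda^{q+1}h(v,v)$ together with the invariance of the spectrum under $\lambda\mapsto\lambda^{-q}$ --- is self-contained and makes visible exactly where each of the five cases comes from: the three semisimple regimes (all, one, or none of the eigenvalues in $\F_{q^2}$ up to the stated pairing) give (A)/(B1), (B2), (B3); the two unipotent Jordan types $(2,1)$ and $(3)$ give (C) and (D); and the commuting mixed case, where block-diagonality with respect to the homology's center-line/axis-plane decomposition caps the Jordan blocks of $\hat\sigma_u$ at size $2$, gives (E) with $\ord(\sigma)=2d$ and the claimed pole--polar configuration. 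What the paper's route buys is brevity and consistency with its later reliance on Theorem~\ref{Hartley}; what yours buys is independence from the subgroup classification and an explicit mechanism (eigenvalues and the form) that is reused implicitly throughout the paper's geometric arguments.

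One step needs a one-line repair. In case (B2) you write that ``Galois stability over $\F_{q^2}$ forces $\lambda\in\F_{q^2}^{*}$'': this is not by itself sufficient, since a Frobenius orbit $\{\lambda,\lambda^{q^2}\}$ of size two with the third eigenvalue in $\F_{q^2}$ is also Galois-stable (this would correspond to fixed points in $\PG(2,q^4)$). To exclude it, combine Galois stability with the $\lambda\mapsto\lambda^{-q}$ invariance you already set up: in such an orbit the partner of $\lambda$ would have to be $\lambda^{-q}=\lambda^{q^2}$, whence $\left(\lambda^{q+1}\right)^{q}=\lambda^{q^2+q}=1$ and, by injectivity of the Frobenius, $\lambda^{q+1}=1$, contradicting $\lambda^{q+1}\ne1$; geometrically this is precisely the $\F_{q^4}$-minimality of $\cH_q$ recorded in Section~\ref{sec:preliminari}. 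With that line added, your case enumeration is exhaustive and each case matches (A)--(E).
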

For a detailed description of the elements and subgroups of $G$, both from an algebraic and a geometric point of view, we refer to \cite{GSX,MZ,MZNotFixing}, on which our geometric arguments are based.

Throughout the paper, a nontrivial element of $G$ is said to be of type (A), (B), (B1), (B2), (B3), (C), (D), or (E), as given in Lemma \ref{classificazione}.
Also, the polar line to $\cH_q$ at $P\in\PG(2,q^2)$ is denoted by $\ell_P$.
Note that, under our assumptions, any element of order $3$ in $G$ is of type (B2).
We will denote a cyclic group of order $d$ by $C_d$ and an elementary abelian group of order $d$ by $E_d$.
The center $Z(S_2)$ of $S_2$ is elementary abelian of order $q$, and any element in $S_2\setminus Z(S_2)$ has order $4$; see \cite[Section 3]{GSX}.

%%%%%%%%%%%%%%%%%%%%%%%%%%%%%%%%%%%%%%%%%%%%%%%%%%%%%%%%%%%%%%%
%%%%%%%%%%%%%%%%%%%%%%%%%%%%%%%%%%%%%%%%%%%%%%%%%%%%%%%%%%%%%%%

\section{Determination of $\mu(H)$ for any subgroup $H$ of $G$}\label{sec:mobius}

Let $n>0$, $q=2^{2^n}$, $G=\PSU(3,q)$.
This section is devoted to the proof of the following theorem.
\begin{theorem}\label{risultato}
Let $H$ be a proper subgroup of $G$.
Then $H$ is the intersection of maximal subgroups of $G$ if and only if $H$ is one of the following groups:
\begin{equation}\label{tipologia}
\begin{array}{c}
S_2\rtimes C_{q^2-1},\quad \PSL(2,q)\times C_{q+1},\quad (C_{q+1}\times C_{q+1})\rtimes Sym(3),\quad C_{q^2-q+1}\rtimes C_3, \\
E_q \rtimes C_{q^2-1},\quad (C_{q+1}\times C_{q+1})\rtimes C_2,\quad C_{q+1}\times C_{q+1},\quad C_{q^2-1},\quad C_{2(q+1)},\\
C_{q+1}=Z(M_2(P))\textrm{ for some }P,\quad E_q,\quad Sym(3),\quad C_3,\quad C_2,\quad \{1\}.
\end{array}
\end{equation}
Given a type of groups in Equation {\rm \ref{tipologia}}, there is just one conjugacy class of subgroups of $G$ of that type.

The normalizer $N_G(H)$ of $H$ in $G$ for the groups $H$ in Equation {\rm \eqref{tipologia}} are, respectively:
\begin{equation}\label{tipologianormalizzante}
\begin{array}{c}
H,\quad H,\quad H,\quad H,\quad H,\quad H,\quad H\rtimes Sym(3),\quad H\rtimes C_2,\quad E_q\times C_{q+1},   \\
\PSL(2,q)\times H,\quad S_2\rtimes C_{q^2-1},\quad H\times C_{q+1},\quad C_{q^2-1}\rtimes C_2,\quad S_2\rtimes C_{q+1},\quad G.
%1,\quad 1,\quad 1,\quad 1, \quad q^2,\quad 1,\quad 6,\quad 2,\quad \frac{q}{2},\quad q(q^2-1),\quad q^2(q^2-1),\quad q+1,\quad \frac{2(q^2-1)}{3},\quad \frac{q^3(q+1)}{2},\quad |G|.
\end{array}
\end{equation}
The values $\mu(H)$ for the groups $H$ in Equation {\rm \eqref{tipologia}} are, respectively:
\begin{equation}\label{tipologiamu}
\begin{array}{c}
-1,\quad -1,\quad -1,\quad -1, \quad 1,\quad 1,\quad 0,\quad 0,\quad 0,\quad 0,\quad 0,\quad q+1,\quad \frac{2(q^2-1)}{3},\quad -\frac{q^3(q+1)}{2},\quad 0.
\end{array}
\end{equation}
\end{theorem}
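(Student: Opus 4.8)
The plan is to compute $\mu(H)$ recursively using Lemma~\ref{intersezione} as the organizing principle: since $\mu(H)\neq0$ forces $H$ to be an intersection of maximal subgroups, I would first determine exactly which subgroups of $G$ arise as such intersections, thereby establishing the list in Equation~\eqref{tipologia} and simultaneously pruning the subgroup lattice to a manageable sublattice on which the recursion lives. To produce this list, I would intersect the four families of maximal subgroups from Theorem~\ref{Hartley} in all essentially distinct geometric configurations, reading off the intersections from the fixed-point/axis data of Lemma~\ref{classificazione}. For instance, two point-stabilizers $M_1(P)\cap M_1(P')$ with $P\neq P'$ in $\cH_q(\fqs)$ stabilize the chord $PP'$ and hence reduce to a group fixing two isotropic points, giving a $C_{q^2-1}$; intersecting $M_2(P)$ with $M_3(T)$ when $P$ is a vertex of $T$ yields the diagonal torus $C_{q+1}\times C_{q+1}$ or its extension by $C_2$; and so on. The $2$-transitivity on $\cH_q(\fqs)$ and transitivity on the relevant triangle configurations guarantee that each isomorphism type gives a single conjugacy class, which settles the uniqueness assertion.

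Once the poset is identified, I would read off the normalizers in Equation~\eqref{tipologianormalizzante} geometrically: the normalizer of a subgroup $H$ is the stabilizer of the geometric configuration that $H$ determines (its fixed points, axes, and poles with respect to $\mathcal{U}$). Thus $N_G(E_q)$ is the full point-stabilizer $M_1(P)=S_2\rtimes C_{q^2-1}$ where $P$ is the common center; $N_G(C_3)$ stabilizes the two isotropic points and one non-isotropic point fixed by a type-(B2) element, giving $C_{q^2-1}\rtimes C_2$; and $N_G(Sym(3))=Sym(3)\times C_{q+1}$ because the extra $C_{q+1}$ is the pointwise-stabilizing homology factor commuting with the $Sym(3)$ permuting a self-polar triangle. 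Here I must be careful that the wreathing/splitting in each semidirect or direct product is the correct one, using the structure of $S_2$ recalled at the end of Section~\ref{sec:preliminari} (center $Z(S_2)\cong E_q$, all elements outside of order~$4$).

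The heart of the proof is the M\"obius recursion $\mu(H)=-\sum_{H<K\le G}\mu(K)$, computed from the top of the poset downward. The maximal subgroups each get $\mu=-1$ directly. For each lower subgroup $H$ I would count, using the normalizer data and the conjugacy-class structure, how many overgroups of each type lie above $H$; the count of overgroups of a given type is an index of normalizers, computable from Equation~\eqref{tipologianormalizzante}. For example, to get $\mu(C_3)=\tfrac{2(q^2-1)}{3}$ I must enumerate the copies of $C_{q^2-1}$, $C_{2(q+1)}$, $M_1$, $M_2$, etc.\ containing a fixed $C_3$, weight each by its (already computed) $\mu$-value, and sum. I expect the main obstacle to be precisely these overgroup-counting computations for the three subgroups with nonzero $\mu$ that are \emph{not} themselves intersections-in-an-obvious-way—namely $Sym(3)$, $C_3$, and $C_2$—where many intermediate subgroups (including those with $\mu=0$, such as $C_{q+1}\times C_{q+1}$ and $C_{q^2-1}$, which must still be counted because they contribute through the recursion even though they vanish) appear above $H$ with nontrivial multiplicities. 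Verifying that the $\mu=0$ entries in Equation~\eqref{tipologiamu} are correct is itself part of this step: for these I would either exhibit two non-conjugate maximal-subgroup intersections forcing a cancellation, or complete the recursion and observe the sum vanishes, and cross-check the final $\mu(\{1\})$ against the $p=2$ Euler-characteristic congruence of Brown (Lemma~\ref{pgruppi} and the surrounding discussion) as an arithmetic consistency test.
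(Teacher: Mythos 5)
Your plan follows the paper's proof essentially step for step: Lemma~\ref{intersezione} to restrict attention to intersections of maximal subgroups, a geometric determination of those intersections together with conjugacy-uniqueness via the transitivity properties (Propositions~\ref{unaclassediconiugio}, \ref{sonointersezioni}, \ref{soloquellisonointersezioni}), normalizers read off as stabilizers of the fixed configurations (Proposition~\ref{normalizzante}), and the top-down M\"obius recursion in which the delicate overgroup counts for $Sym(3)$, $C_3$ and $C_2$ are handled by double-counting arguments (Lemmas~\ref{involuzionefissatriangoli}, \ref{C3fissatriangoli}, \ref{S3fissatriangoli} and Proposition~\ref{valoridimu}). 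The one point to make explicit is that the paper proves completeness of the list in Equation~\eqref{tipologia} by \emph{iteratively} intersecting each already-listed subgroup with every maximal subgroup, not merely by taking pairwise intersections of maximal subgroups, so your step of checking ``all essentially distinct geometric configurations'' must be understood to include this closure iteration.
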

%By Lemma \ref{intersezione}, we only consider the cases in which $H$ is the intersection of maximal subgroups of $G$, which are described in Theorem \ref{Hartley};
%recall that two maximal subgroups of $G$ of the same type are conjugated in $G$.
The proof of Theorem \ref{risultato} is divided into several propositions.
\begin{proposition}\label{unaclassediconiugio}
The group $G$ contains exactly one conjugacy class for any group in Equation {\rm \eqref{tipologia}}.
\end{proposition}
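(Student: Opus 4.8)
The plan is to establish the statement type by type, exploiting the geometric description of $G=\PGU(3,q)$ acting on $\PG(2,q^2)$ and the Hermitian curve $\cH_q(\fqs)$. For each isomorphism type $H$ listed in Equation \eqref{tipologia}, I would first exhibit a concrete representative inside $G$ as the (pointwise or setwise) stabilizer of a well-chosen geometric configuration—a point on or off $\cH_q$, a self-polar triangle, a Frobenius triangle, a flag, etc.—and then argue that $G$ acts transitively on the set of all such configurations, so that any two subgroups of the given type are conjugate. The transitivity statements needed are largely supplied by the preliminaries: $G$ acts $2$-transitively on $\cH_q(\fqs)$, transitively on $\PG(2,q^2)\setminus\cH_q$, transitively on non-degenerate self-polar triangles, and transitively on the Frobenius triangles in $\cH_q(\F_{q^6})\setminus\cH_q(\fqs)$ (Section \ref{sec:preliminari}). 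The maximal cases $S_2\rtimes C_{q^2-1}$, $\PSL(2,q)\times C_{q+1}$, $(C_{q+1}\times C_{q+1})\rtimes Sym(3)$, and $C_{q^2-q+1}\rtimes C_3$ are then immediate from Theorem \ref{Hartley}, since maximal subgroups are by statement listed up to conjugacy and their orders are pairwise distinct.

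For the non-maximal types I would proceed downward. The index-two and index-three subgroups of the maximal ones—namely $E_q\rtimes C_{q^2-1}$, $(C_{q+1}\times C_{q+1})\rtimes C_2$, $C_{q+1}\times C_{q+1}$, and $Sym(3)$—can be pinned down as stabilizers of enriched or impoverished configurations: for instance $C_{q+1}\times C_{q+1}$ is the pointwise stabilizer of a self-polar triangle (the kernel of the $Sym(3)$-action permuting its vertices), and conjugacy follows from transitivity on self-polar triangles together with the fact that the permutation part is determined intrinsically. The cyclic types $C_{q^2-1}$, $C_{2(q+1)}$, $C_{q+1}$, $C_3$, and $C_2$ I would handle via Lemma \ref{classificazione}: each nontrivial element falls into exactly one geometric type (A)–(E) determined by its order and fixed configuration, and a cyclic group is the group generated by such an element (or, for $C_{q^2-1}$ and $C_{2(q+1)}$, the full cyclic stabilizer of the associated fixed structure). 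Since elements of a fixed type with a fixed order are permuted transitively by $G$ acting on their fixed configurations, the cyclic subgroups they generate form a single class. For $C_2$ in particular, all involutions are elations of type (C) with center on $\cH_q(\fqs)$, and the $2$-transitivity (indeed transitivity on incident point–tangent pairs) gives a single class; for $C_3$, all order-$3$ elements are of type (B2) by the remark following Lemma \ref{classificazione}, fixing a point off $\cH_q$ and two points on it, and transitivity on such triples yields one class. The key structural point throughout is that the normalizer data in Equation \eqref{tipologianormalizzante}—which I would verify in the same breath—confirms that each $H$ is self-paired with a unique geometric object, so no two distinct classes can share an isomorphism type.

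The step I expect to be the main obstacle is the cyclic group $C_{q+1}$, because $q+1$ can have divisors realized by elements of more than one geometric type: an element of order dividing $q+1$ may be a homology of type (A) or may fix a self-polar triangle as in type (B1), and the two give genuinely different subgroup embeddings. The listed representative is $C_{q+1}=Z(M_2(P))$, the center of a $\PSL(2,q)\times C_{q+1}$ factor, and I must show that this particular $C_{q+1}$—consisting of homologies with a common center $P\notin\cH_q$ and common axis $\ell_P$—is the only class arising as an intersection of maximals, distinguishing it from the $C_{q+1}$ sitting diagonally or as a factor inside $C_{q+1}\times C_{q+1}$. The resolution is to observe that a cyclic group of order $q+1$ all of whose nontrivial elements are type-(A) homologies must share a single center–axis pair (since the center of a homology of order dividing $q+1$ is recovered as its unique fixed point off the axis, and powers share fixed structures), and that $G$ is transitive on pole–polar pairs $(P,\ell_P)$ with $P\notin\cH_q$; the full homology group fixing such a pair is exactly $C_{q+1}$, giving one class. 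Care is needed to check that no element of this $C_{q+1}$ is of type (B1) with a strictly larger fixed triangle, which would place it in a different intersection pattern—this is where the precise order-versus-type bookkeeping of Lemma \ref{classificazione} does the real work.
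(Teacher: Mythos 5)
Your overall skeleton is the same as the paper's: exhibit each type as a subgroup attached to a geometric configuration and use the transitivity of $G$ on configurations of that kind; your handling of the delicate case $C_{q+1}=Z(M_2(P))$ (homologies share a common pole--polar pair, and $G$ is transitive on points of $\PG(2,q^2)\setminus\cH_q$) is essentially the paper's. But there is a genuine gap in the step you state as ``elements of a fixed type with a fixed order are permuted transitively by $G$ acting on their fixed configurations.'' Transitivity on configurations only lets you assume the two subgroups have the \emph{same} fixed configuration; it does not make them conjugate, and for several types in \eqref{tipologia} the configuration does not determine the subgroup. Concretely, for $C_2$ your argument fails as stated: the elations with center $P$ and axis $\ell_P$ form $E_q\setminus\{1\}$, so $q-1$ distinct involutions share the same incident point--tangent pair, and transitivity on such pairs gives nothing further. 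The paper closes this by observing that the cyclic group $C_{q-1}\leq C_{q^2-1}$ normalizes $E_q$ with no nontrivial element of $C_{q-1}$ commuting with any nontrivial element of $E_q$, whence its conjugation orbit on the $q-1$ involutions has full length, i.e.\ the stabilizer of the configuration is itself transitive on the involutions fixing it. The same defect recurs for $C_{2(q+1)}$: this group is \emph{not} ``the full cyclic stabilizer of the associated fixed structure'' --- the full stabilizer of a pair $(P,Q)$ with $P\in\cH_q(\fqs)$, $Q\in\ell_P(\fqs)\setminus\cH_q$ is $E_q\rtimes C_{q^2-1}$, which contains more than one cyclic subgroup of order $2(q+1)$ with exactly those fixed points, so again a stabilizer-level conjugation argument is required (the paper pins down the generator and uses transitivity of $S_2$ on the $q^2$ points of $\ell_P(\fqs)\setminus\cH_q$). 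Likewise for $Sym(3)$: after conjugating the $C_3=\langle\alpha\rangle$, there are many involutions $\beta$ with $\beta\alpha\beta^{-1}=\alpha^{-1}$, one for each point of $PQ(\fqs)\setminus\cH_q$ fixed by $\beta$; ``the permutation part is determined intrinsically'' does not dispose of this, whereas the paper conjugates the $\beta$'s using the transitive action of the pointwise stabilizer $C_{q^2-1}$ of $\{P,Q\}$ on $PQ(\fqs)\setminus\cH_q$. For $E_q\rtimes C_{q^2-1}$ the paper sidesteps configuration bookkeeping entirely: it conjugates the Sylow $2$-subgroups $E_q$, notes $N_G(E_q)=M_1(P)$, and invokes Schur--Zassenhaus to conjugate the complements $C_{q^2-1}$; your plan would need an intrinsic identification of the group with the full stabilizer of its configuration, which you do not supply.

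Two smaller points. First, your description of $E_q\rtimes C_{q^2-1}$, $(C_{q+1}\times C_{q+1})\rtimes C_2$, $C_{q+1}\times C_{q+1}$, $Sym(3)$ as ``index-two and index-three subgroups of the maximal ones'' is wrong (the indices are $q^2$, $3$, $6$, and large, respectively), though this does not affect the strategy. Second, your $C_3$ argument is actually sound, but only because of a fact you leave implicit and should state: after conjugating the fixed pair $\{P,Q\}\subset\cH_q(\fqs)$ by $2$-transitivity, both subgroups of order $3$ lie in the pointwise stabilizer of $\{P,Q\}$, which is \emph{cyclic} of order $q^2-1$ and therefore contains a unique subgroup of order $3$. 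It is precisely this ``uniqueness inside the configuration stabilizer'' step --- available for $C_3$ and $C_{q^2-1}$, but false for $C_2$, $C_{2(q+1)}$, and $Sym(3)$ --- that your proposal treats as automatic and that constitutes the real content of the paper's case-by-case proof.
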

\begin{proof}
\begin{itemize}
\item The groups in the first row of Equation \eqref{tipologia} are the maximal subgroups of $G$, for which there is just one conjugacy class by Theorem \ref{Hartley}.
\item Let $\alpha_1,\alpha_2\in G$ have order $3$, so that they are of type (B2) and $\alpha_i$ fixes two distinct points $P_i,Q_i\in\cH_q(\fqs)$. The group $G$ is $2$-transitive on $\cH_q(\fqs)$, and the pointwise stabilizer of $\{P_i,Q_i\}$ is cyclic of order $q^2-1$. Hence, $\langle\alpha_1\rangle$ and $\langle\alpha_2\rangle$ are conjugated in $G$.
\item Let $\alpha_1,\alpha_2\in G$ have order $2$, so that they are of type (C) and $\alpha_i$ fixes exactly one point $P_i$ on $\cH_q(\fqs)$. Up to conjugation $P_1=P_2$, as $G$ is transitive on $\cH_q(\fqs)$. The involutions fixing $P_1$ in $G$, together with the identity, form an elementary abelian group $E_q$, which is normalized by a cyclic group $C_{q-1}$; no nontrivial element of $C_{q-1}$ commutes with any nontrivial element of $E_q$ (see \cite[Section 4]{GSX}). Hence, $\alpha_1$ and $\alpha_2$ are conjugated under an element of $C_{q-1}$.
\item Let $\alpha_1,\alpha_2,\beta_1,\beta_2 \in G$ satisfy $o(\alpha_i)=3$, $o(\beta_i)=2$, and $H_i:=\langle\alpha_i,\beta_i\rangle\cong Sym(3)$. As shown in the previous point, we can assume $\alpha_1=\alpha_2$ up to conjugation. Let $P,Q\in\cH_q(\fqs)$ and $R\in\PG(2,q^2)\setminus\cH_q$ be the fixed points of $\alpha_1$. Since $\beta_i\alpha_1\beta_i^{-1}=\alpha_1^{-1}$, we have that $\beta_i$ fixes $R$ and interchanges $P$ and $Q$; $\beta$ is then uniquely determined from the $\fqs$-rational point of $PQ$ fixed by $\beta$ (namely, the intersection between $PQ$ and the axis of $\beta$).
Since the pointwise stabilizer $C_{q^2-1}$ of $\{P,Q\}$ acts transitively on $PQ(\fqs)\setminus\cH_q$, $\beta_1$ and $\beta_2$ are conjugated, and the same holds for $H_1$ and $H_2$.
\item Any two groups isomorphic to $C_{q^2-1}$ are conjugated in $G$, because they are generated by elements of type (B2) and $G$ is $2$-transitive on $\cH_q(\fqs)$.
\item Any two groups isomorphic to $E_q$ are conjugated in $G$, because any such group fixes exactly one point $P\in\cH_q(\fqs)$, $G$ is transitive on $\cH_q(\fqs)$, and the stabilizer $G_P=M_1(P)$ contains just one subgroup $E_q$.
\item Any two groups $H_1,H_2\cong E_q\rtimes C_{q^2-1}$ are conjugated in $G$. In fact, their Sylow $2$-subgroups $E_q$ coincide up to conjugation, as shown in the previous point.
The normalizer $N_G(E_q)$ fixes the fixed point $P\in\cH_q(\fqs)$ of $E_q$, and hence $N_G(E_q)=M_1(P)=S_2\rtimes C_{q^2-1}$.
The complements $C_{q^2-1}$ are conjugated by Schur-Zassenhaus Theorem; hence, $H_1$ and $H_2$ are conjugated.
\item Any two groups isomorphic to $C_{2(q+1)}$ are conjugated in $G$, because they are generated by elements of type (E) and two elements $\alpha_1,\alpha_2$ of type (E) of the same order are conjugated in $G$.
In fact, $\alpha_i$ is uniquely determined by its fixed points $P_i\in\cH_q(\fqs)$ and $Q_i\in\ell_{P_i}(\fqs)\setminus\cH_q$; here, $\ell_{P_i}$ is the polar line of $P_i$.
Up to conjugation $P_1=P_2$, from the transitivity of $G$ on $\cH_q(\fqs)$.
Also, $S_2$ has order $q^3$ and acts on the $q^2$ points of $\ell_{P_i}(\fqs)\setminus\cH_q$ with kernel $E_q$, hence transitively.
We can then assume $Q_1=Q_2$.
\item Let $Z_{P_i}$ be the center of $M_2(P_i)$, $i=1,2$.
As shown in \cite[Section 4]{DVMZ}, $Z_{P_i}\cong C_{q+1}$ and $Z_{P_i}$ is made by the homologies with center $P_i$, together with the identity.
Since $G$ is transitive on $\PG(2,q^2)\setminus\cH_q$, we have up to conjugation that $M_2(P_1)=M_2(P_2)$ and $Z_{P_1}=Z_{P_2}$.
\item Any two groups $H_1,H_2\cong C_{q+1}\times C_{q+1}$ are conjugated in $G$. In fact, $H_i$ is the pointwise stabilizer of a self-polar triangle $T_i=\{P_i,Q_i,R_i\}\subset\PG(2,q^2)\setminus\cH_q$ (see \cite[Section 3]{DVMZ}), and the stabilizers of $T_1$ and $T_2$ are conjugated by Theorem \ref{Hartley}.
\item Any two groups $H_1,H_2\cong(C_{q+1}\times C_{q+1})\rtimes C_2$ are conjugated in $G$.
In fact, their subgroups $C_{q+1}\times C_{q+1}$ coincide up to conjugation as shown above, and fix pointwise a self-polar triangle $T=\{P,Q,R\}\subset\PG(2,q^2)\setminus\cH_q$.
Let $\beta_i\in H_i$ have order $2$, $i=1,2$. Then $\beta_i$ fixes one vertex of $T$ and interchanges the other two vertexes. Up to conjugation in $M_3(T)$ we have $\beta_1(P)=\beta_2(P)=P$.
Then $H_1=H_2$, as they coincide with the stabilizer of $P$ in $M_3(T)$.
\end{itemize}
\end{proof}

\begin{proposition}\label{normalizzante}
The normalizers $N_G(H)$ of the groups $H$ in Equation {\rm \eqref{tipologia}} are given in Equation {\rm \eqref{tipologianormalizzante}}.
\end{proposition}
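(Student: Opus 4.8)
The plan is to treat the fifteen classes in \eqref{tipologia} by a common two-step recipe and to isolate the few cases that need finer structural input. The uniform idea is that $N_G(H)$ must preserve every object canonically attached to $H$: the common fixed points and fixed lines of $H$ on $\PG(2,\bar{\F}_q)$, together with a characteristic subgroup of $H$ and \emph{its} fixed configuration. Since conjugation by an element of $N_G(H)$ permutes the fixed points of $H$ and respects the partition of $\PG(2,q^2)$ into $\cH_q(\fqs)$ and $\PG(2,q^2)\setminus\cH_q$, one first confines $N_G(H)$ to the stabilizer of a point, of a self-polar triangle, or of a pair of points of $\cH_q(\fqs)$, and then computes the normalizer inside that stabilizer, whose order is known from Theorem \ref{Hartley} and from the $2$-transitivity of $G$ on $\cH_q(\fqs)$. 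The first row is immediate: each of $S_2\rtimes C_{q^2-1}$, $\PSL(2,q)\times C_{q+1}$, $(C_{q+1}\times C_{q+1})\rtimes Sym(3)$, $C_{q^2-q+1}\rtimes C_3$ is maximal, hence self-normalizing because $G$ is simple, so $N_G(H)=H$; and $N_G(\{1\})=G$ trivially.

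For the ``defining'' subgroups the reduction is direct. If $H=E_q$ (elations with common center $P\in\cH_q(\fqs)$) then, as already recorded in the proof of Proposition \ref{unaclassediconiugio}, $N_G(E_q)$ fixes $P$ and equals $M_1(P)=S_2\rtimes C_{q^2-1}$. If $H=Z(M_2(P))\cong C_{q+1}$ is the group of homologies with center $P\in\PG(2,q^2)\setminus\cH_q$, every element of $H$ has center $P$, so $N_G(H)$ fixes $P$ and $N_G(H)\le M_2(P)$; since $H$ is central in $M_2(P)$ we get $N_G(H)=M_2(P)=\PSL(2,q)\times H$. If $H=C_{q+1}\times C_{q+1}$ is the pointwise stabilizer of a self-polar triangle $T$, a generic element of $H$ fixes exactly the three vertices, so $N_G(H)$ stabilizes $T$ and lies in $M_3(T)$; conversely the pointwise stabilizer is normal in the full stabilizer, whence $N_G(H)=M_3(T)=H\rtimes Sym(3)$. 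For $H=C_{q^2-1}$, a generator is of type (B2) with fixed points $P_1\notin\cH_q$, $P_2,P_3\in\cH_q(\fqs)$; then $N_G(H)$ fixes $P_1$ and stabilizes $\{P_2,P_3\}$, so it is contained in the setwise stabilizer of the pair $\{P_2,P_3\}$, of order $2(q^2-1)$ by $2$-transitivity, and this stabilizer normalizes the pointwise stabilizer $H$, giving $N_G(H)=C_{q^2-1}\rtimes C_2$. The same pair governs $H=C_3$: it is the unique (hence characteristic) subgroup of order $3$ of $C_{q^2-1}$, $N_G(C_3)$ again stabilizes $\{P_2,P_3\}$, and the setwise stabilizer normalizes $C_3$, so $N_G(C_3)=C_{q^2-1}\rtimes C_2$.

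The composite subgroups are handled by isolating a characteristic subgroup and a Frobenius/quotient argument. For $H=E_q\rtimes C_{q^2-1}$ the Sylow $2$-subgroup $E_q$ is characteristic, so $N_G(H)\le N_G(E_q)=M_1(P)$; in $M_1(P)/E_q\cong E_{q^2}\rtimes C_{q^2-1}$, a Frobenius group with complement the image of $C_{q^2-1}$, the complement is self-normalizing, and pulling back gives $N_G(H)=H$. For $H=(C_{q+1}\times C_{q+1})\rtimes C_2$ the odd part is the characteristic Hall $2'$-subgroup, so $N_G(H)\le M_3(T)$, and modulo $C_{q+1}\times C_{q+1}$ one reduces to $N_{Sym(3)}(C_2)=C_2$, whence $N_G(H)=H$. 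For $H=Sym(3)$ one checks that $H$ fixes a unique point $R\in\PG(2,q^2)\setminus\cH_q$, so $N_G(H)\le M_2(R)=\PSL(2,q)\times C_{q+1}$; here $H$ lies in the $\PSL(2,q)$ factor acting on the chord $\ell_R$ while $C_{q+1}$ centralizes $H$, so the problem reduces to $N_{\PSL(2,q)}(Sym(3))=Sym(3)$. This last equality uses $3\mid(q-1)$ (as $q=2^{2^n}\equiv1\pmod3$): $Sym(3)$ is a dihedral subgroup of the normalizer $D_{2(q-1)}$ of a split torus, and since $q-1$ is odd, squaring is an automorphism of $C_{q-1}$ and a short computation in $D_{2(q-1)}$ forces the normalizer of $Sym(3)$ to be $Sym(3)$, giving $N_G(H)=Sym(3)\times C_{q+1}=H\times C_{q+1}$.

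The two delicate cases are the involution and $C_{2(q+1)}$, and this is where I expect the real work, since they require the internal action of the torus and of the homology groups on $S_2=E_q\,.\,E_{q^2}$. For $H=C_2=\langle\sigma\rangle$ with $\sigma$ an elation (type (C)) of center $P$, one has $N_G(C_2)=C_G(\sigma)$; as $\sigma\in Z(S_2)=E_q$ the whole of $S_2$ centralizes $\sigma$, and $C_G(\sigma)$ fixes the pole--polar pair $(P,\ell_P)$, so $C_G(\sigma)\le M_1(P)$. The crux is the action of $C_{q^2-1}$ on $E_q$: by the structure recalled from \cite{GSX} it has kernel $C_{q+1}$ and image $C_{q-1}$ acting semiregularly on $E_q\setminus\{1\}$, so the stabilizer of $\sigma$ in $C_{q^2-1}$ is exactly $C_{q+1}$, yielding $C_G(\sigma)=S_2\rtimes C_{q+1}$. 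Building on this, for $H=C_{2(q+1)}=\langle\sigma_2\rangle\times\langle\sigma_{q+1}\rangle$ (with $\sigma_2$ the elation and $\sigma_{q+1}$ a homology of center $Q\in\ell_P$) the involution $\sigma_2$ is central in $C_G(\sigma_2)=S_2\rtimes C_{q+1}$, so $N_G(H)=N_{C_G(\sigma_2)}(\langle\sigma_{q+1}\rangle)$; identifying $\langle\sigma_{q+1}\rangle$ with the complement $C_{q+1}$ and showing its fixed-point subgroup in $S_2$ is precisely $Z(S_2)=E_q$ (i.e.\ $C_{q+1}$ centralizes $E_q$ and acts fixed-point-freely on $S_2/Z(S_2)$) gives $N_G(H)=E_q\times C_{q+1}$. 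Pinning down this last fixed-point computation on $S_2$, for which I would rely on the explicit descriptions in \cite{GSX,DVMZ}, is the main obstacle; the remaining cases are then routine.
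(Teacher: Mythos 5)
Your proposal is correct: all fifteen cases of Equation \eqref{tipologia} are covered and each normalizer agrees with Equation \eqref{tipologianormalizzante}. The overall skeleton --- confine $N_G(H)$ via the fixed configuration of $H$ or of a characteristic subgroup, then compute inside the appropriate stabilizer --- is the same as the paper's, but you take genuinely different local routes in three cases. For $E_q\rtimes C_{q^2-1}$ the paper passes through $N_G(Z(M_2(P)))=M_2(P)$ and the maximality of the Borel subgroup $E_q\rtimes C_{q-1}$ in $\PSL(2,q)$, whereas you use $N_G(E_q)=M_1(P)$ and the self-normalization of the Frobenius complement in $M_1(P)/E_q\cong \AGL(1,q^2)$; both are sound, and yours stays inside a single maximal subgroup. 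For $Sym(3)$ the paper argues geometrically with the points $A_\beta,A_{\alpha\beta},A_{\alpha^2\beta}$ and the semiregularity of $C_{q^2-1}\setminus C_{q+1}$ on $PQ$, while you reduce to $N_{\PSL(2,q)}(Sym(3))=Sym(3)$ inside $M_2(R)=\PSL(2,q)\times C_{q+1}$; your dihedral computation in $D_{2(q-1)}$ is valid (using $3\mid(q-1)$ and $q-1$ odd), though you should add one line justifying that $H$ actually lies in the $\PSL(2,q)$ factor: the projection of $H$ to $C_{q+1}$ is an abelian quotient of $Sym(3)$ of odd order, hence trivial. For $C_{2(q+1)}$ the paper pins down the torus part by invoking Huppert's classification of subgroups of $\PGL(2,q)$ acting on $\ell_Q\cap\cH_q$, whereas you intersect $C_G(\sigma_2)=S_2\rtimes C_{q+1}$ with $N_G(\langle\sigma_{q+1}\rangle)$ and need $C_{S_2}(C_{q+1})=Z(S_2)$; the computation you flag as the ``main obstacle'' is not a genuine gap, since in the norm-trace model \eqref{M2}, with $S_2=\{\psi_{b,c}:(x,y)\mapsto(x+b^qy+c,\;y+b)\}$ and torus elements $\alpha_a:(x,y)\mapsto(a^{q+1}x,ay)$, one checks directly that $\alpha_a\psi_{b,c}\alpha_a^{-1}=\psi_{ab,\;a^{q+1}c}$, so $a^{q+1}=1$ gives centralization of $Z(S_2)$ (the elements with $b=0$) and the fixed-point-free action $b\mapsto ab$ on $S_2/Z(S_2)$. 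This same formula also substantiates your Frobenius claim for $M_1(P)/E_q$ and your $C_2$ case (where the paper makes the analogous structural assertions with a citation to the same sources). The remaining cases --- the maximal subgroups, $\{1\}$, $E_q$, $Z(M_2(P))$, $C_{q+1}\times C_{q+1}$, $(C_{q+1}\times C_{q+1})\rtimes C_2$, $C_{q^2-1}$, and $C_3$ --- coincide with the paper's arguments up to wording (your $2$-transitivity argument for the setwise pair stabilizer replaces the paper's explicit involution \eqref{involuzione}); note also that your direct-product conclusion $E_q\times C_{q+1}$ for $C_{2(q+1)}$ matches Equation \eqref{tipologianormalizzante}, the action of $C_{q+1}$ on $E_q$ being trivial.
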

\begin{proof}
\begin{itemize}
\item Clearly $N_G(H)=H$ for any $H$ in the first row of Equation \eqref{tipologia} as $H$ is maximal in $G$.
\item Let $H=E_q\rtimes C_{q^2-1}$. Then $N_G(H)\leq M_2(P)$, where $P$ is the unique fixed point of $C_{q^2-1}$ in $\PG(2,q^2)\setminus \cH_q$.
The group $H$ has a unique cyclic subgroup $C_{q+1}$ of order $q+1$; namely, $C_{q+1}$ is the center of $M_2(P)$ and is made by the homologies with center $P$; since $q$ is even, $H$ is a split extension $C_{q+1}\times (E_q\rtimes C_{q-1})$. Hence, $N_G(H)\leq N_G(C_{q+1})=M_2(P)$.
The group $H/C_{q+1}=E_q\rtimes C_{q-1}$ is self-normalizing (being maximal) in $M_2(P)/C_{q+1}=\PSL(2,q)$; thus, $N_G(E_q\rtimes C_{q-1})=H$ and $N_G(H)=H$.
\item Let $H=C_{q+1}\times C_{q+1}$. Then $N_G(H)\leq M_3(T)$, where $T$ is the self-polar triangle fixed pointwise by $H$. Since $H$ is the kernel of $M_3(T)$ in its action on $T$, we have $N_G(H)=M_3(T)$ and $|N_G(H)|=6|H|$.
\item Let $H=(C_{q+1}\times C_{q+1})\rtimes C_2$. Then $C_{q+1}\times C_{q+1}$ is normal in $N_G(H)$, being the unique subgroup of index $2$ in $H$. Hence $N_G(H)\leq M_3(T)$, where $T$ is the self-polar triangle fixed pointwise by $H$.
Also, $N_G(H)$ fixes the vertex $P$ of $T$ fixed by $H$, so that $N_G(H)\ne M_3(T)$.
This implies $N_G(H)=H$.
\item Let $H=C_{q^2-1}$. Then $H$ is generated by an element $\alpha$ of type (B2) with fixed points $P,Q\in\cH_q(\fqs)$ and $R\in\PG(2,q^2)\setminus\cH_q$.
Let $\beta$ be an involution satisfying $\beta(R)=R$, $\beta(P)=Q$, and $\beta(Q)=P$; then $\beta\in N_G(H)$, because $H$ coincides with the pointwise stabilizer of $\{P,Q\}$ in $G$.
An explicit description is the following: given $\cH_q$ with equation \eqref{M2}, we can assume up to conjugation that $\alpha=\diag(a^{q+1},a,1)$ where $a$ is a generator if $\mathbb{F}_{q^2}^*$ (see \cite{GSX}); then take
\begin{equation}\label{involuzione}
\beta=\begin{pmatrix} 0 & 0 & 1 \\ 0 & 1 & 0 \\ 1 & 0 & 0 \end{pmatrix}.
\end{equation}
Since $N_G(H)$ acts on $\{P,Q\}$ and $\beta\in N_G(H)$, the pointwise stabilizer $H$ of $\{P,Q\}$ has index $2$ in $N_G(H)$. This implies $N_G(H)=C_{q^2-1}\rtimes C_2$ and $|N_G(H)|=2|H|$.
\item Let $H=C_{2(q+1)}$, so that $H$ is generated by an element $\alpha$ of type (E) fixing exactly two points $P\in\cH_q(\fqs)$ and $Q\in\ell_P(\fqs)\setminus\cH_q$. Then $N_G(H)$ fixes $P$ and $Q$.
The subgroup $E_q$ of $M_1(P)$ commutes with $H$ elementwise, while any $2$-element in $M_1(P)\setminus E_q$ has order $4$ and does not fix $Q$; hence, the Sylow $2$-subgroup of $N_G(H)$ is $E_q$.
Also, $N_G(H)=E_q\rtimes C_d$, where $C_d$ is a subgroup of $C_{q^2-1}$ containing the subgroup $C_{q+1}$ of $H$. Let $C_2$ be the subgroup of $H$ of order $2$; the quotient group $(C_2\rtimes C_{d})/C_{q+1}\cong C_2\rtimes C_{\frac{d}{q+1}}$ acts faithfully as a subgroup of $\PGL(2,q)$ on the $q+1$ points of $\ell_Q\cap\cH_q$. By the classification of subgroups of $\PGL(2,q)$ (\cite{D}; see \cite[Hauptsatz 8.27]{Hup}), this implies $d=1$; that is, $N_G(H)=E_q\rtimes C_{q+1}$ and $|N_G(H)|=\frac{q}{2}|H|$.
\item Let $H=C_{q+1}=Z(M_2(P))$. Since $H$ is the center of $M_2(P)$, $M_2(P)\leq N_G(H)$. Conversely, $H$ is made by homologies with center $P$, and hence $N_G(H)$ fixes $P$. Thus, $N_G(H)=M_2(P)$ and $|N_G(H)|=q(q^2-1)|H|$.
\item Let $H=E_q$. Since $E_q$ has a unique fixed point $P$ on $\cH_q(\fqs)$ and $E_q=Z(M_1(P))$, we have $N_G(H)\leq M_1(P)$ and $M_1(P)\leq N_G(H)$, so that $N_G(H)=M_1(P)$ and $|N_G(H)|=q^2(q^2-1)|H|$.
\item Let $H=Sym(3)=\langle\alpha,\beta\rangle$, with $o(\alpha)=3$ and $o(\beta)=2$. Let $P,Q\in\cH_q(\fqs)$ and $R\in \PG(2,q^2)\setminus\cH_q$ be the fixed points of $\alpha$; $\beta$ fixes $R$, interchanges $P$ and $Q$, and fixes another point $A_{\beta}$ on $\ell_R\cap \cH_q$.
The group $N_G(H)$ acts on $\{P,Q\}$ and on $\{A_{\beta},A_{\alpha\beta},A_{\alpha^2\beta}\}$. The pointwise stabilizer $C_{q^2-1}$ has a subgroup $C_{q+1}$ which is the center of $M_2(P)$ and fixes $PQ$ pointwise, while any element in $C_{q^2-1}\setminus C_{q+1}$ acts semiregularly on $PQ\setminus\{P,Q\}$; hence, $C_{q^2-1}\cap N_G(H)=C_{3(q+1)}$. If an element $\gamma\in N_G(H)$ fixes $\{P,Q\}$ pointwise, then $\gamma$ fixes a point in $\{A_{\beta},A_{\alpha\beta},A_{\alpha^2\beta}\}$, and hence $\gamma\in\{\beta,\alpha\beta,\alpha^2\beta\}$.
Therefore, $N_G(H)=C_{3(q+1)}\rtimes C_2 =H\times C_{q+1}$ and $|N_G(H)|=(q+1)|H|$.
\item Let $H=C_3$ and $\alpha$ be a generator of $H$, with fixed points $P,Q\in\cH_q(\fqs)$ and $R\in\PG(2,q^2)\setminus\cH_q$. The normalizer $N_G(H)$ fixes $R$ and acts on $\{P,Q\}$.
There exists an involution $\beta\in G$ normalizing $H$ and interchanging $P$ and $Q$ (see Equation \eqref{involuzione}).
Then the pointwise stabilizer of $\{P,Q\}$ has index $2$ in $N_G(H)$.
Also, the pointwise stabilizer of $\{P,Q\}$ in $G$ is cyclic of order $q^2-1$.
Then $N_G(H)=C_{q^2-1}\rtimes C_2$ and $|N_G(H)|=\frac{2(q^2-1)}{3}|H|$.
\item Let $H=C_2$ and $\alpha$ be a generator of $H$, with fixed point $P\in\cH_q(\fqs)$. Then $N_G(H)$ fixes $P$, i.e. $N_G(H)\leq M_1(P)=S_2\rtimes C_{q^2-1}$. Since any involution of $M_1(P)$ is in the center of $S_2$, the Sylow $2$-subgroup of $N_G(H)$ has order $q^3$.
Let $\beta\in C_{q^2-1}$. If $o(\beta)\mid(q+1)$, then $\beta$ commutes with any involution of $S_2$. If $o(\beta)\nmid(q+1)$, then $\beta$ does not commute with any element of $S_2$.
This implies that $N_G(H)=S_2\rtimes C_{q+1}$, and $|N_G(H)|=\frac{q^3(q+1)}{2}|H|$.
\end{itemize}
\end{proof}

\begin{lemma}\label{involuzionefissatriangoli}
Let $\alpha\in G$ be an involution, and hence an elation, with center $P$ and axis $\ell_P$.
Then there exist exactly $q^3/2$ self-polar triangles $T_{i,j}=\{P_i,Q_{i,j},R_{i,j}\}$, $i=1,\ldots,q^2$, $j=1,\ldots,\frac{q}{2}$, such that $\alpha$ stabilizes $T_{i,j}$.
Also, $P_i\in\ell_P$ and $P\in Q_{i,j}R_{i,j}$ for any $i$ and $j$.
\end{lemma}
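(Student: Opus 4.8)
The plan is to set up a bijection between the self-polar triangles stabilized by $\alpha$ and the data $(P_i,\{Q_{i,j},R_{i,j}\})$, and then to count the second coordinate by a short computation on a line. First I recall from Lemma \ref{classificazione}(C) that $\alpha$ is an elation with center $P\in\cH_q(\fqs)$ and axis $\ell_P$ (the tangent at $P$), so that the fixed points of $\alpha$ in $\PG(2,q^2)$ are exactly the points of $\ell_P$. If $T=\{A,B,C\}$ is a self-polar triangle stabilized by $\alpha$, then $\alpha$ permutes its three vertices; since $\alpha$ cannot fix all of them (three fixed points would be collinear on the axis $\ell_P$), it fixes exactly one vertex and interchanges the other two. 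Denote the fixed vertex by $P_i$; being a fixed point off $\cH_q$, it is one of the $q^2$ points of $\ell_P(\fqs)\setminus\{P\}$ (all of which lie off $\cH_q$, as $\ell_P$ is tangent), which gives the index $i=1,\dots,q^2$.

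Next I would describe the side opposite to $P_i$. By self-polarity this side is the polar line $\ell_{P_i}$, which is a chord because $P_i\notin\cH_q$. Since $P_i\in\ell_P$ and $\ell_P$ is the polar of $P$, the points $P$ and $P_i$ are conjugate with respect to $\mathcal U$, hence $P\in\ell_{P_i}$; this already yields the asserted incidences $P_i\in\ell_P$ and $P\in Q_{i,j}R_{i,j}$. Thus the problem reduces to the following local count: on the chord $\ell_{P_i}$, determine the number of unordered pairs $\{B,C\}$ of points off $\cH_q$ that are interchanged by $\alpha$ and are conjugate with respect to $\mathcal U$ (the latter being exactly the condition that $\{P_i,B,C\}$ be self-polar, since the pole $P_i$ is automatically conjugate to every point of its polar $\ell_{P_i}$). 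Every such pair produces a stabilized triangle, and distinct pairs and distinct $P_i$ produce distinct triangles, so the correspondence is a genuine bijection.

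The heart of the argument is this count on $\ell_{P_i}$, and here is how I would carry it out. The restriction of $\mathcal U$ to the $2$-dimensional space underlying $\ell_{P_i}$ is a non-degenerate Hermitian form (a chord does not pass through its pole $P_i$), so I may choose homogeneous coordinates $[u_0:u_1]$ on $\ell_{P_i}$ in which this form is $u_0^qv_1+u_1^qv_0$ and $P=[1:0]$. In the affine coordinate $t=u_0/u_1$ one computes that the points of $\ell_{P_i}\cap\cH_q(\fqs)$ are $\fq\cup\{\infty\}$, that the conjugacy involution $\iota$ induced by $\mathcal U$ is $t\mapsto t^q$, and that $\alpha$ restricts to a projectivity fixing $P=\infty$ and no other point of $\ell_{P_i}$ (the axis $\ell_P$ and the chord $\ell_{P_i}$ share only $P$, so $P$ is the unique fixed point of the elation $\alpha$ on $\ell_{P_i}$); hence $\alpha|_{\ell_{P_i}}$ is a nontrivial translation $t\mapsto t+c$, with $c\in\fq^\ast$ because $\alpha$ preserves $\cH_q$. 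A pair $\{B,C\}$ is simultaneously an $\alpha$-pair and a conjugate pair precisely when $\alpha(B)=\iota(B)$, i.e. $t+c=t^q$, that is
\begin{equation*}
\Tr_{\fqs/\fq}(t)=t^q+t=c .
\end{equation*}
Since the trace is $\fq$-linear and surjective, this equation has exactly $q$ solutions, and all of them satisfy $t\notin\fq$ (for $t\in\fq$ one has $\Tr_{\fqs/\fq}(t)=0\neq c$), so all $q$ solutions lie off $\cH_q$. They fall into $q/2$ orbits $\{t,t+c\}$ under $\alpha$, each orbit being also a conjugate pair; this gives the $q/2$ triangles $T_{i,j}$ for the chosen $P_i$, and $q^2\cdot\frac{q}{2}=q^3/2$ in total.

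I expect the main obstacle to be this local count, and within it the step identifying $\alpha|_{\ell_{P_i}}$ as a nontrivial translation: one must use carefully that $\ell_{P_i}$ passes through the center $P$ but is distinct from the axis $\ell_P$, so that $\alpha$ has no second fixed point on $\ell_{P_i}$, and that the translation parameter lands in $\fq^\ast$ because $\alpha$ stabilizes $\cH_q$. The remaining bookkeeping—verifying that the correspondence $T\leftrightarrow(P_i,\{B,C\})$ is a bijection and that the trace equation has the claimed solution set—is routine.
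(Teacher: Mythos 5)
Your proof is correct, but it proceeds quite differently from the paper's. The paper counts globally by double counting the incidences $(\beta,T)$ with $\beta$ an involution stabilizing the self-polar triangle $T$: it uses that $G$ has $(q^3+1)(q-1)$ involutions, that the triangles number $[G:M_3(T)]$, and that each triangle is stabilized by exactly $3(q+1)$ involutions, obtaining the total $q^3/2$; the even distribution into $q^2\cdot\frac{q}{2}$ is then deduced from the transitivity of $M_1(P)$ on $\ell_P(\fqs)\setminus\{P\}$, rather than proved pointwise. You instead argue locally: after the (correct) reduction to a bijection $T\leftrightarrow(P_i,\{B,C\})$ via the observation that the elation fixes exactly one vertex and that the opposite side is the chord $\ell_{P_i}$ through $P$, you compute on each chord in explicit coordinates, where conjugacy becomes $t\mapsto t^q$, the restriction of $\alpha$ is a translation $t\mapsto t+c$ with $c\in\fq^\ast$, and the compatibility condition becomes the trace equation $t^q+t=c$ with exactly $q$ solutions, all off $\cH_q$, pairing into $q/2$ orbits. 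Your key technical step — that $\alpha$ restricts to a fixed-point-free-on-affine-points translation because $\ell_{P_i}$ meets the axis $\ell_P$ only in $P$, with $c\in\fq^\ast$ forced by invariance of $\ell_{P_i}\cap\cH_q=\fq\cup\{\infty\}$ — is sound, as is the verification that conjugacy plus incidence with $\ell_{P_i}$ yields self-polarity. The trade-off: your argument is self-contained and strictly stronger at the local level (it proves directly that each of the $q^2$ points $P_i$ lies in exactly $q/2$ stabilized triangles, with no appeal to transitivity), while the paper's double count is shorter and leans on group orders and transitivity facts already established; either route also yields the stated incidences $P_i\in\ell_P$ and $P\in Q_{i,j}R_{i,j}$, which you derive cleanly by reciprocity of the polarity.
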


\begin{proof}
The number of involutions in $G$ is $(q^3+1)(q-1)$, since for any of the $q^3+1$ $\fqs$-rational points $P$ of $\cH_q$ the involutions fixing $P$ form a group $E_q$.
The number of self-polar triangles $T\subset\PG(2,q^2)\setminus\cH_q$ is $[G : M_3(T)]=\frac{(q^3+1)q^3(q^2-1)}{6(q+1)^2}$.
For any self-polar triangle $T=\{A_1,A_2,A_3\}\subset\PG(2,q^2)\setminus\cH_q$, the number of involutions in $G$ stabilizing $T$ is $3(q+1)$. In fact, for any of the $3$ vertexes of $T$ there are exactly $q+1$ involutions $\alpha_1,\ldots,\alpha_{q+1}$ fixing that vertex, say $A_1$, and interchanging $A_2$ and $A_3$; $\alpha_i$ is uniquely determined by its center $A_2 A_3 \cap \cH_q$.
Then, by double counting the size of
$$\{(\beta,T)\mid \beta\in G,\; o(\beta)=2,\; T\subset\PG(2,q^2)\setminus\cH_q\textrm{ is a self-polar triangle},\; \beta\textrm{ stabilizes }T \},$$
$\alpha$ stabilizes exactly $\frac{q^3}{2}$ self-polar triangles $T$.
For any such $T$, one vertex $P_i$ of $T$ lies on the axis of $\alpha$, because $\alpha$ is an elation, and the other two vertexes $\{Q_{i,j},R_{i,j}\}$ of $T$ lie on the polar line $\ell_{P_i}$ of $P_i$.
Since $M_1(P)$ is transitive on the $q^2$ points $P_1,\ldots,P_{q^2}$ of $\ell_P(\fqs)\setminus\{P\}$, any point $P_i$ is contained in the same number $\frac{q}{2}$ of self-polar triangles $T_{i,j}$ stabilized by $\alpha$.
%Since $\alpha$ stabilizes every line through $P$, a self-polar triangle $T$ can be stabilized by $\alpha$ only if $T$ has a vertex on $\ell_P(\fqs)\setminus\{P\}$.
%Let $\cH_q$ be given by Equation \eqref{M1}. Up to conjugation in $G$,
%$$ \alpha=\begin{pmatrix} 0 & \gamma & 0 \\ \gamma^{-1} & 0 & 0 \\ 0 & 0 & 1 \end{pmatrix} $$
%for some $\gamma\in\fqs$ with $\gamma^{q+1}=1$.
%Then $\alpha$ has center $P=(\gamma,1,0)$ and axis $\ell_P:X=\gamma Y$.
%Let $P_1$ be any point of $\ell_P(\fqs)\setminus\{P\}$; up to conjugation in $M_1(P)$, $P_1=(0,0,1)$ and $\ell_{P_1}:Z=0$.
%On $\ell_{P_1}$ $\alpha$ interchanges $Q_{1,\frac{q}{2}}=(1,0,0)$ and $R_{1,\frac{q}{2}}=(0,1,0)$; hence, $\alpha$ stabilizes the self-polar fundamental triangle $T_{1,\frac{q}{2}}=\{P_1,Q_{1,\frac{q}{2}},R_{1,\frac{q}{2}}\}$. For any other $\fqs$-rational point $Q_{1,j}=(\bar x,1,0)$ of $\ell_{P_1}$, we have by direct checking that $\alpha(Q_{1,j})=(\gamma^2/\bar{x},1,0)$ lies on the polar line $\bar{x}^q X+Y=0$ of $Q_{1,j}$ if and only if $\bar{x}^{q-1}=\gamma^{-2}$; if in addition we require $Q_{1,j}\ne P$, then $\bar{x}\ne\gamma$.
%Then we have $q-2$ choices for $Q_{1,j}$, that is, $\frac{q-2}{2}$ self-polar triangles $T_{1,j}=\{P_1,Q_{1,j},R_{1,j}=\alpha(Q_{1,j})\}$, $j=1,\ldots,\frac{q-2}{2}$; together with the fundamental triangle $T_{1,\frac{q}{2}}$, there are exactly $\frac{q}{2}$ self-polar triangles with a vertex in $P_1$ and stabilized by $\alpha$.
%Since we can replace $P_1$ by anyone of the $q^2$ points of $\ell_P(\fqs)\setminus\{P\}$, the claim follows.
\end{proof}

\begin{lemma}\label{C3fissatriangoli}
Let $\alpha\in G$ have order $3$.
Then there are exactly $\frac{q^2-1}{3}$ self-polar triangles $T_i\subset\PG(2,q^2)\setminus\cH_q$, $i=1,\ldots,\frac{q^2-1}{3}$, which are stabilized by $\alpha$.
Also, there are exactly $\frac{2(q^2-1)}{3}$ triangles $\tilde{T}_j=\{P_j,\Phi_{q^2}(P_j),\Phi_{q^2}^2 (P_j)\}\subset\cH_q(\mathbb{F}_{q^6})\setminus\cH_q(\fqs)$, $j=1,\ldots,\frac{2(q^2-1)}{3}$, which are stabilized by $\alpha$.
\end{lemma}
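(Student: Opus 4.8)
The plan is to prove both counts by the same double-counting scheme used in Lemma \ref{involuzionefissatriangoli}, pairing order-$3$ elements of $G$ with the triangles they stabilize. First I would record the two ingredients that do not depend on the individual $\alpha$. Every element of order $3$ is of type (B2), the subgroups $C_3$ form a single conjugacy class (Proposition \ref{unaclassediconiugio}), and $N_G(C_3)=C_{q^2-1}\rtimes C_2$ (Proposition \ref{normalizzante}); hence the number of subgroups $C_3$ is $|G|/(2(q^2-1))=q^3(q^3+1)/2$, so $G$ contains exactly $q^3(q^3+1)$ elements of order $3$. Moreover, since the $C_2$ in $N_G(C_3)$ inverts a generator, all these elements are conjugate, and conjugation permutes each of the two families of triangles bijectively; thus every order-$3$ element stabilizes the same number of self-polar triangles and the same number of triangles $\tilde T$, and it suffices to compute these two numbers via double counting. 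A key arithmetic remark, used repeatedly, is that $q=2^{2^n}\equiv1\pmod 3$, whence $3\mid(q-1)$, $3\nmid(q+1)$, and $3\nmid(q^2-q+1)$.

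For part 1, I would count the pairs $(\beta,T)$ with $\beta$ of order $3$ and $T$ a self-polar triangle stabilized by $\beta$. Counting over $T$, the number of self-polar triangles is $[G:M_3(T)]=\frac{(q^3+1)q^3(q^2-1)}{6(q+1)^2}$ as in Lemma \ref{involuzionefissatriangoli}, and for fixed $T$ the order-$3$ elements stabilizing it are exactly the order-$3$ elements of $M_3(T)\cong(C_{q+1}\times C_{q+1})\rtimes Sym(3)$. Here the structural computation is the heart of the matter: since $3\nmid(q+1)$ the pointwise stabilizer $C_{q+1}\times C_{q+1}$ has no element of order $3$, so an order-$3$ element must project to a $3$-cycle of $Sym(3)$. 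Writing such an element, in a coordinate system diagonalizing the form, as $\diag(\lambda_1,\lambda_2,\lambda_3)$ with $\lambda_i^{q+1}=1$ (modulo scalars) times the cyclic permutation, its cube equals the scalar $\diag(\lambda_1\lambda_2\lambda_3,\lambda_1\lambda_2\lambda_3,\lambda_1\lambda_2\lambda_3)$, hence is trivial in $G$; thus every element in each of the two $3$-cycle cosets has order $3$, giving $2(q+1)^2$ order-$3$ elements in $M_3(T)$. Double counting then yields $q^3(q^3+1)\cdot t=\frac{(q^3+1)q^3(q^2-1)}{6(q+1)^2}\cdot 2(q+1)^2$, so $t=\frac{q^2-1}{3}$.

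For part 2, I would run the identical argument with $M_4(T)\cong C_{q^2-q+1}\rtimes C_3$. The number of triangles $\tilde T=\{P,\Phi_{q^2}(P),\Phi_{q^2}^2(P)\}\subset\cH_q(\F_{q^6})\setminus\cH_q(\fqs)$ is $\frac{q^6+q^5-q^4-q^3}{3}=\frac{q^3(q+1)(q^2-1)}{3}$ as recorded in the preliminaries, and an order-$3$ element of $G$ cannot fix any vertex of $\tilde T$ (its fixed points lie over $\fqs$), so it projects onto a generator of $C_3$ and lies outside the kernel $C_{q^2-q+1}$. Since $3\nmid(q^2-q+1)$ the kernel has no order-$3$ element, and for an element outside it the cube is governed by the norm $n\mapsto n\,n^{q^2}n^{q^4}=n^{1+q^2+q^4}$ on $C_{q^2-q+1}$; using the factorization $q^4+q^2+1=(q^2-q+1)(q^2+q+1)$ this norm is trivial on all of $C_{q^2-q+1}$, so every element in the two nontrivial cosets has order $3$, giving $2(q^2-q+1)$ order-$3$ elements in $M_4(T)$. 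Double counting gives $q^3(q^3+1)\cdot s=\frac{q^3(q+1)(q^2-1)}{3}\cdot 2(q^2-q+1)$, and since $q^3+1=(q+1)(q^2-q+1)$ this simplifies to $s=\frac{2(q^2-1)}{3}$, as claimed.

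The main obstacle will be the two structural computations of the order-$3$ elements inside $M_3(T)$ and $M_4(T)$: one must verify that, although $3$ divides the orders of these groups only once, every ``rotational'' coset consists entirely of order-$3$ elements. This is precisely where the arithmetic facts $3\nmid(q+1)$, $3\nmid(q^2-q+1)$ and the cyclotomic factorization $q^4+q^2+1=(q^2-q+1)(q^2+q+1)$ enter, the latter making $M_4(T)$ a Frobenius group with kernel $C_{q^2-q+1}$ and the cube of each rotation in $M_3(T)$ a scalar. Once these facts are in place, the two double counts are routine.
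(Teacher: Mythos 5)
Your proposal is correct and follows essentially the same double-counting scheme as the paper: it pairs order-$3$ elements with the triangles they stabilize, using the single conjugacy class of order-$3$ elements, the triangle counts $[G:M_3(T)]$ and $[G:M_4(\tilde T)]$, and the counts $2(q+1)^2$ and $2(q^2-q+1)$ of order-$3$ elements in $M_3(T)$ and $M_4(\tilde T)$. The only differences are cosmetic and harmless: you count the order-$3$ elements via $[G:N_G(C_3)]$ rather than via pairs of fixed points on $\cH_q(\fqs)$ (both give $q^3(q^3+1)$), and you prove the two coset facts directly (the scalar cube of a monomial matrix, and the norm $n\mapsto n^{1+q^2+q^4}$ killed by $q^2-q+1\mid q^4+q^2+1$) where the paper cites \cite{DVMZ} and \cite{CKT2}.
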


\begin{proof}
By Proposition \ref{unaclassediconiugio}, any two subgroups of $G$ of order $3$ are conjugated in $G$.
Also, any element of order $3$ is conjugated to its inverse by an involution of $G$. Hence, any two element of order $3$ are conjugated in $G$.

Now the claim follows by double counting the size of
$$\{(\beta,T)\mid \beta\in G,\; o(\beta)=3,\; T\subset\PG(2,q^2)\setminus\cH_q\textrm{ is a self-polar triangle},\; \beta\textrm{ stabilizes }T \},$$
$$\{(\beta,\tilde{T})\mid \beta\in G,\; o(\beta)=3,\; \tilde{T}=\{P,\Phi_{q^2}(P),\Phi_{q^2}^2 (P)\}\textrm{ for some }P\in\cH_q(\mathbb{F}_{q^6})\setminus\cH_q(\fqs),\; \beta\textrm{ stabilizes }T \},$$
using the following facts.
The number of elements of order $3$ in $G$ is $\binom{q^3+1}{2}\cdot2$.
The number of self-polar triangles $T\subset\PG(2,q^2)\setminus\cH_q$ is $[G:M_3(T)]$.
The number of elements of order $3$ stabilizing a fixed self-polar triangle $T$ is $2(q+1)^2$, because any element acting as a $3$-cycle on the vertexes of $T$ has order $3$ (see \cite[Section 3]{DVMZ}).
The number of triangles $\tilde{T}=\{P,\Phi_{q^2}(P),\Phi_{q^2}^2 (P)\}\subset\cH_q(\mathbb{F}_{q^6})\setminus\cH_q(\fqs)$ is $[G:M_4(\tilde T)]$.
The number of elements of order $3$ stabilizing a fixed triangle $\tilde T$ is $2(q^2-q+1)$, because any element in $M_4(\tilde T)\setminus C_{q^2-q+1}$ has order $3$ (see \cite[Section 4]{CKT2}).
\end{proof}

\begin{lemma}\label{S3fissatriangoli}
Let $H<G$ be isomorphic to $Sym(3)$, $H=\langle\alpha\rangle\rtimes\langle\beta\rangle$.
Then there are exactly $q+1$ self-polar triangles $T_i=\{P_i,Q_i,R_i\}\subset\PG(2,q^2)\setminus\cH_q$, $i=1,\ldots,q+1$, which are stabilized by $H$.
Up to relabeling the vertexes, we have that $P_1,\ldots,P_{q+1}$ lie on the axis of the elation $\beta$, $Q_1,\ldots,Q_{q+1}$ lie on the axis of the elation $\alpha\beta$, and $R_1,\ldots,R_{q+1}$ lie on the axis of the elation $\alpha^2\beta$.
\end{lemma}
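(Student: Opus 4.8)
The plan is to pin down the number $q+1$ by a double count over incident pairs (subgroup isomorphic to $Sym(3)$, stabilized self-polar triangle), exactly in the spirit of Lemmas \ref{involuzionefissatriangoli} and \ref{C3fissatriangoli}, and then to read off the vertex--axis distribution from the action of $H$ on the vertices. First I would record the structural fact underlying everything: the full stabilizer in $G$ of a self-polar triangle $T\subset\PG(2,q^2)\setminus\cH_q$ is $M_3(T)\cong(C_{q+1}\times C_{q+1})\rtimes Sym(3)$ (Theorem \ref{Hartley}(iii)), whose normal subgroup $K:=C_{q+1}\times C_{q+1}$ is the pointwise (kernel of the action on the vertices) stabilizer of $T$. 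Since $q$ is even and $3\nmid(q+1)$, the order $(q+1)^2$ of $K$ is coprime to $6$. Consequently a subgroup of $G$ stabilizes $T$ and is isomorphic to $Sym(3)$ if and only if it is an order-$6$ subgroup of $M_3(T)$, if and only if it is a complement of $K$ in $M_3(T)$; moreover any such subgroup meets $K$ trivially, hence projects isomorphically onto the quotient $Sym(3)$ and acts faithfully, as the full symmetric group, on the three vertices of $T$.

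The heart of the argument, which I expect to be the \emph{main obstacle}, is to count the complements of $K$ in $M_3(T)$. Because $K$ is a normal Hall subgroup, Schur--Zassenhaus gives that its complements form a single conjugacy class, of cardinality $[M_3(T):N_{M_3(T)}(S)]$ for one fixed complement $S\cong Sym(3)$. I would compute $N_{M_3(T)}(S)=C_K(S)\rtimes S$ (for $k\in K$ one has $kSk^{-1}=S$ iff $k\in C_K(S)$, using that $K$ is abelian), and then show $C_K(S)=1$: the quotient $S$ permutes the three diagonal factors of the torus $K$ exactly as it permutes the three vertices of $T$, so the only element of $K$ centralized by all of $S$ is the (projective) identity. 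This gives $[M_3(T):N_{M_3(T)}(S)]=|K|=(q+1)^2$, so every self-polar triangle is stabilized by exactly $(q+1)^2$ subgroups isomorphic to $Sym(3)$.

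With this in hand the double count is routine bookkeeping. Counting the set $\{(H',T): H'\cong Sym(3),\ T\text{ a self-polar triangle stabilized by }H'\}$ by triangles gives $[G:M_3(T)]\cdot(q+1)^2=\frac{|G|}{6(q+1)^2}\cdot(q+1)^2=\frac{|G|}{6}$. Counting by subgroups instead, all copies of $Sym(3)$ are conjugate (Proposition \ref{unaclassediconiugio}) with normalizer of order $6(q+1)$ (Proposition \ref{normalizzante}), so there are $\frac{|G|}{6(q+1)}$ of them, each stabilizing the same number $N$ of triangles. Equating the two counts yields $N=\frac{|G|/6}{|G|/(6(q+1))}=q+1$, which is the first assertion.

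Finally I would establish the vertex--axis labeling. On each of these $q+1$ triangles $H$ acts faithfully as $Sym(3)$ on the vertices, so $\alpha$ is a $3$-cycle while $\beta,\alpha\beta,\alpha^2\beta$ are the three transpositions, each fixing a distinct vertex. By Lemma \ref{classificazione}(C) each of these involutions is an elation, and the fixed-point set of an elation is precisely its axis; hence the vertex fixed by $\beta$ lies on the axis of $\beta$, and similarly for $\alpha\beta$ and $\alpha^2\beta$. Labelling $P_i,Q_i,R_i$ as the vertices of $T_i$ fixed by $\beta,\alpha\beta,\alpha^2\beta$ respectively gives the stated distribution across $i=1,\dots,q+1$; distinctness of $P_1,\dots,P_{q+1}$ (and likewise for the $Q_i$ and $R_i$) is automatic, since $\alpha$ carries $P_i$ to the other two vertices and so $P_i$ already determines $T_i$.
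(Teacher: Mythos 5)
Your proposal is correct, and its outer skeleton coincides with the paper's: a double count of incident pairs (copy of $Sym(3)$, stabilized self-polar triangle), with the global number of $Sym(3)$ subgroups obtained from $|N_G(Sym(3))|=6(q+1)$ (Proposition \ref{normalizzante}) and the final vertex--axis labeling read off from Lemma \ref{classificazione}, exactly as in the paper's closing sentence. Where you genuinely diverge is at the step you rightly flag as the main obstacle, namely that each self-polar triangle $T$ is stabilized by exactly $(q+1)^2$ subgroups isomorphic to $Sym(3)$. The paper proves this by a two-sided argument inside $M_3(T)$: a lower bound of $[M_3(T):Sym(3)]=(q+1)^2$ conjugates of one fixed copy, using $N_G(K)\cap M_3(T)=K$ (again via Proposition \ref{normalizzante}), matched by an upper bound obtained by counting the $(q+1)^2$ subgroups of order $3$ in $M_3(T)$ and showing each lies in a unique $Sym(3)$ of $M_3(T)$. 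You instead observe that, since $\gcd\left((q+1)^2,6\right)=1$, the $Sym(3)$ subgroups stabilizing $T$ are exactly the complements of the normal abelian Hall subgroup $K=C_{q+1}\times C_{q+1}$ in $M_3(T)$, invoke Schur--Zassenhaus to get a single conjugacy class of complements of size $[M_3(T):N_{M_3(T)}(S)]$, and compute $N_{M_3(T)}(S)=C_K(S)\,S=S$; this yields the exact count in one stroke, purely group-theoretically, and is a perfectly valid (arguably cleaner) substitute for the paper's geometric lower-bound/upper-bound pincer. Two minor points to tighten: in proving $N_K(S)=C_K(S)$ what you actually use is $K\trianglelefteq M_3(T)$ together with $K\cap S=1$ (so that $[k,s]\in K\cap S$), not abelianness of $K$; and in showing $C_K(S)=1$, since $K$ sits in a projective group, an element $\diag(a,b,c)$ fixed by the $3$-cycle is fixed only up to a scalar $\lambda$ with $\lambda^3=1$, and you need $\gcd(3,q+1)=1$ (true here, as $3\mid(q-1)$) to force $\lambda=1$ and hence $a=b=c$; your parenthetical ``(projective) identity'' together with the recorded coprimality shows you saw this, but it deserves one explicit line.
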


\begin{proof}
By Proposition \ref{unaclassediconiugio}, any two subgroups $K_1,K_2<G$ with $K_i\cong Sym(3)$ are conjugated, and $|N_G(K_i)|=6(q+1)$; hence, the number of subgroups of $G$ isomorphic to $Sym(3)$ is $[G:N_G(K_i)]=\frac{(q^3+1)q^3(q-1)}{6}$.
The number of self-polar triangles $T$ is $[G:M_3(T)]=\frac{(q^2-q+1)q^3(q-1)}{6}$.
Then the claim on the number of self-polar triangles follows by double counting the size of
$$\{(K,T)\mid K< G,\; K\cong Sym(3)\; T\subset\PG(2,q^2)\setminus\cH_q\textrm{ is a self-polar triangle},\; K\textrm{ stabilizes }T \}$$
if we show that, for any self-polar triangle $T=\{A,B,C\}$, there are in $G$ exactly $(q+1)^2$ subgroups isomorphic to $Sym(3)$ which stabilize $T$.

Let $K< M_3(T)$, $K\cong Sym(3)$, $K=\langle\alpha,\beta\rangle$ with $O(\alpha)=3$, $o(\beta)=2$. Let $P,Q,R$ be the fixed points of $\alpha$, with $P\in\PG(2,q^2)\setminus\cH_q$, $Q,R\in\cH_q(\fqs)$.
By Proposition \ref{normalizzante}, $N_G(K)= K\times C_{q+1}$ where $C_{q+1}$ is made by homologies with center $P$; this implies $N_G(K)\cap M_3(T)=K$, Hence, there are at least $[M_3(T):Sym(3)]=(q+1)^2$ distinct groups $Sym(3)$ stabilizing $T$, namely the conjugates of $K$ through elements of $M_3(T)$.
On the other side, $M_3(T)$ contains exactly $(q+1)^2$ subgroups $K$ of order $3$, with fixed points $P\in\PG(2,q^2)\setminus\cH_q$, $Q,R\in\cH_q(\fqs)$. Any involution $\beta$ of $M_3(T)$ normalizing $K$ is uniquely determined by the vertex of $T$ that $\beta$ fixes, because $\beta(P)=P$, $\beta(Q)=R$, and $\beta(R)=Q$. Thus, $K$ is contained in exactly one subgroup of $M_3(T)$ isomorphic to $Sym(3)$.
Therefore the number of subgroups isomorphic to $Sym(3)$ which stabilize $T$ is $(q+1)^2$.

Finally, the configuration of the vertexes of $T_1,\ldots,T_{q+1}$ on the axes of the involutions of $H$ follows from Lemma \ref{classificazione} and the fact that every involution fixes a different vertex of $T_i$.
\end{proof}

\begin{proposition}\label{sonointersezioni}
Any group $H$ in Equation \eqref{tipologia} is the intersection of maximal subgroups of $G$.
\end{proposition}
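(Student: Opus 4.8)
The plan is to prove the proposition type by type, realising each $H$ in \eqref{tipologia} as the full stabiliser in $G$ of an explicit configuration of points, lines and triangles, and hence as an intersection of the geometric maximal subgroups $M_1,M_2,M_3,M_4$ of Theorem \ref{Hartley}. The four groups in the first row of \eqref{tipologia} are themselves maximal, so for them there is nothing to prove. For every other $H$ I would write down concrete maximal subgroups containing $H$, read off the containment from the description of elements in Lemma \ref{classificazione}, and then check that the intersection is exactly $H$ by computing its order via orbit--stabiliser or by invoking the normaliser data of Proposition \ref{normalizzante}. Throughout, $\ell_P$ denotes the polar line of $P$.

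For the ``large'' groups the verification is a direct stabiliser computation. I expect $E_q\rtimes C_{q^2-1}=M_1(P)\cap M_2(Q)$ with $Q\in\ell_P(\fqs)\setminus\{P\}$ on the tangent $\ell_P$ (since $M_1(P)$ is transitive on these $q^2$ points with kernel $E_q$, as in the proof of Lemma \ref{involuzionefissatriangoli}, the stabiliser of $Q$ has order $q(q^2-1)$); that $(C_{q+1}\times C_{q+1})\rtimes C_2=M_3(T)\cap M_2(P)$ for a vertex $P$ of the self-polar triangle $T$; that $C_{q+1}\times C_{q+1}=M_2(P)\cap M_2(Q)\cap M_2(R)$ is the pointwise stabiliser of $T=\{P,Q,R\}$; that $C_{q^2-1}=M_1(P)\cap M_1(Q)$ for $P,Q\in\cH_q(\fqs)$; that $E_q=M_1(P)\cap M_2(Q_1)\cap M_2(Q_2)$ for $Q_1,Q_2\in\ell_P(\fqs)\setminus\{P\}$ (three collinear fixed points on the tangent force the whole elation group $E_q$); and that $Z(M_2(P))=C_{q+1}=M_1(C_1)\cap M_1(C_2)\cap M_1(C_3)$ for three points $C_i\in\ell_P\cap\cH_q(\fqs)$ on the chord $\ell_P$ (fixing three points of $\ell_P\cap\cH_q$ kills the $\PSL(2,q)$ factor of $M_2(P)$ and leaves only the homologies with centre $P$).

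For the ``small'' groups the containments are immediate but the equalities require the incidence counts of Lemmas \ref{involuzionefissatriangoli}--\ref{S3fissatriangoli}, and this is where the real work lies. I would take $C_{2(q+1)}=M_1(P)\cap M_2(Q)\cap M_3(T)$ with $T=\{Q,A,B\}$ a self-polar triangle having $Q$ as a vertex, $P$ on the opposite side $\ell_Q$, and the order-two part of the type (E) generator interchanging $A$ and $B$; the intersection is then the set of elements of the vertex stabiliser $(C_{q+1}\times C_{q+1})\rtimes C_2$ of $Q$ in $M_3(T)$ that fix $P\in\ell_Q$, namely the homologies with centre $Q$ together with that involution, which is exactly $\cong C_{2(q+1)}$. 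For $C_3=M_1(P)\cap M_1(Q)\cap M_3(T)$, with $T$ one of the $\frac{q^2-1}{3}$ triangles of Lemma \ref{C3fissatriangoli}, I use that $M_1(P)\cap M_1(Q)=C_{q^2-1}$ has odd order and that no vertex of $T$ lies on $\ell_R=PQ$; hence any element of $C_{q^2-1}\cap M_3(T)$ acts on the vertices as a power of the $3$-cycle (an order-$2$ action is excluded, and a nontrivial homology with centre $R$ would force the three vertices collinear with $R$), so $C_{q^2-1}\cap M_3(T)=C_3$. For $C_2=M_1(P)\cap M_2(Q_1)\cap M_2(Q_2)\cap M_3(T)$ the first three factors give $E_q$, and by the incidence structure of Lemma \ref{involuzionefissatriangoli} exactly one nontrivial elation with centre $P$ stabilises a given $T$ among the $\frac{q^3}{2}$ triangles fixed by $\beta$, so $E_q\cap M_3(T)=\langle\beta\rangle$.

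The main obstacle is $Sym(3)=\langle\alpha,\beta\rangle$, for which I would intersect $M_2(R)$ (the stabiliser of the common off-curve fixed point $R$) with the stabilisers $M_3(T_i)$ of the $q+1$ self-polar triangles fixed by $Sym(3)$ from Lemma \ref{S3fissatriangoli}. Writing $Z=C_{q+1}=Z(M_2(R))$ for the homologies with centre $R$, one first checks $M_3(T_i)\cap Z=1$ by the same collinearity obstruction (no vertex of $T_i$ lies on $\ell_R$), so the intersection injects into $\PSL(2,q)=M_2(R)/Z$. The decisive geometric input is that the three axes of the involutions $\beta,\alpha\beta,\alpha^2\beta$ all pass through $R$, and that by Lemma \ref{S3fissatriangoli} the three vertices of each $T_i$ lie one on each axis; hence any $g$ in the intersection permutes the three axes, and after correcting by a suitable element of $Sym(3)$ it fixes each axis and therefore fixes every one of the $q+1$ vertices lying on it. Three fixed points determine a line pointwise, and two lines fixed pointwise force $g=1$, so the intersection equals $Sym(3)$. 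Finally $\{1\}$ is the intersection of all maximal subgroups, a proper normal subgroup of the simple group $G$, hence trivial. The delicate points are precisely these exactness checks for $Sym(3)$, $C_3$ and $C_2$, all resting on the vertex--axis incidence counts of Lemmas \ref{involuzionefissatriangoli}--\ref{S3fissatriangoli} together with the absence of involutions in the odd-order tori $C_{q^2-1}$ and $C_{q+1}$.
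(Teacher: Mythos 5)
Your proposal is correct, and it shares the paper's overall strategy: realise each $H$ in \eqref{tipologia} as the joint stabiliser of an explicit configuration and check exactness via the incidence counts of Lemmas \ref{involuzionefissatriangoli}--\ref{S3fissatriangoli}. Several of your decompositions, however, genuinely differ from the paper's, and it is worth recording the comparison. For $C_3$ the paper takes $H=M_4(\tilde{T}_1)\cap M_4(\tilde{T}_2)$ for two of the Frobenius triangles of Lemma \ref{C3fissatriangoli}, where exactness is immediate (a larger intersection would contain a nontrivial element fixing both triangles pointwise, against Lemma \ref{classificazione}); your $M_1\cap M_1\cap M_3$ decomposition avoids $M_4$ altogether, at the cost of the collinearity argument inside the odd-order torus $C_{q^2-1}$ --- both routes are valid, the paper's being shorter, yours staying within the self-polar geometry already set up. For $Sym(3)$ the paper uses the bare intersection $M_3(T_1)\cap\cdots\cap M_3(T_{q+1})$ with a brief contradiction, whereas you add $M_2(R)$ and run an explicit pencil-of-axes argument; your version actually supplies detail the paper leaves implicit (the paper merely asserts that the $T_i$ share no vertices). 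For $C_{2(q+1)}$ you replace the paper's appeal to the classification of subgroups of $\PSL(2,q)$ (\cite[Hauptsatz 8.27]{Hup}) by a direct computation in the vertex stabiliser $(C_{q+1}\times C_{q+1})\rtimes C_2$ of $M_3(T)$, which is correct and more self-contained; for $E_q\rtimes C_{q^2-1}$, $E_q$ and $C_2$ your choices differ only cosmetically from the paper's (order counting in place of the exclusion of type (D) elements; $M_1(P)$ plus two $M_2$'s in place of three $M_2$'s). One step to tighten in your $Sym(3)$ argument: concluding that each axis is fixed pointwise requires the $q+1$ vertices on it to be pairwise distinct, which you use tacitly; this holds because two triangles $T_i\ne T_j$ sharing the vertex on the axis of $\beta$ would, after applying $\alpha$ and $\alpha^2$, share all vertices and coincide --- or you can shortcut entirely: once $gh$ fixes the three vertices of a single $T_i$ together with $R$, which is neither a vertex nor on a side of $T_i$ (since no vertex lies on $\ell_R$), Lemma \ref{classificazione} already forces $gh=1$.
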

\begin{proof}
\begin{itemize}
\item The groups in the first row of Equation \eqref{tipologia} are exactly the maximal subgroups of $G$.
\item Let $H=E_q\rtimes C_{q^2-1}$. Let $P\in\cH_q(\fqs)$ be the unique point of $\cH_q$ fixed by $E_q$; $E_q$ fixes $\ell_P$ pointwise. Also, the fixed points of $C_{q^2-1}$ are $P,Q\in\cH_q(\fqs)$ and $R\in\PG(2,q^2)\setminus\cH_q$, where $R\in\ell_P$ and $PQ=\ell_R$.
Then $H\leq M_1(P)\cap M_2(R)$.
Conversely, from $M_1(P)\cap M_2(R)\leq M_1(P)$ follows $M_1(P)\cap M_2(R)=K\rtimes C_d$ with $K\leq S_2$ and $C_d\leq C_{q^2-1}$. From $M_1(P)\cap M_2(R)\leq M_2(R)$ follows that $K$ does not contain any element of type (D), so that $K\leq E_q$. Thus, $M_1(P)\cap M_2(R)\leq H$, and $H=M_1(P)\cap M_2(R)$.
\item Let $H=(C_{q+1}\times C_{q+1})\rtimes C_2$.
Let $T=\{P,Q,R\}\subset\PG(2,q^2)\setminus\cH_q$ be the self-polar triangle fixed pointwise by $C_{q+1}\times C_{q+1}$, and let $P$ be the vertex of $T$ fixed by $C_2$. Then $H\leq M_3(T)\cap M_2(P)$.
Conversely, since $M_3(T)\cap M_2(P)$ fixes $P$ and acts on $\{Q,R\}$, the pointwise stabilizer $C_{q+1}\times C_{q+1}$ of $T$ has index at most $2$ in $M_3(T)\cap M_2(P)$, so that $M_3(T)\cap M_2(P)\leq H$.
Thus, $H=M_3(T)\cap M_2(P)$.
\item Let $H=C_{q+1}\times C_{q+1}$.
Let $T=\{P,Q,R\}\subset\PG(2,q^2)\setminus\cH_q$ be the self-polar triangle fixed pointwise by $C_{q+1}\times C_{q+1}$.
Since $H$ is the whole pointwise stabilizer of $T$ in $G$, we have $H= M_2(P)\cap M_2(Q)\cap M_2(R)$.
\item Let $H=C_{q^2-1}$ and let $\alpha$ be a generator of $H$, with fixed points $P,Q\in\cH_q(\fqs)$ and $R\in\PG(2,q^2)\setminus\cH_q$.
The pointwise stabilizer of $\{P,Q\}$ in $G$ is exactly $H$; thus, $H=M_1(P)\cap M_1(Q)$.
\item Let $H=C_{2(q+1)}$ and let $\alpha$ be a generator of $H$, of type (E), with fixed points $P\in\cH_q(\fqs)$ and $Q\in\ell_P(\fqs)\setminus\cH_q$.
By Lemma \ref{involuzionefissatriangoli} there are $\frac{q}{2}$ sel-polar triangles stabilized by the involution $\alpha^{q+1}$ having one vertex in $Q$ and two vertexes on $\ell_Q$; let $T=\{Q,R_1,R_2\}$ be one of these triangles.
Then $H\leq M_1(P)\cap M_2(Q)\cap M_3(T)$.

Conversely, let $\sigma\in (M_1(P)\cap M_2(Q)\cap M_3(T))\setminus\{1\}$. If $\sigma$ fixes $\{R_1,R_2\}$ pointwise, then from $\sigma\in M_1(P)$ follows that $\sigma$ is in the kernel $C_{q+1}\leq H$ of the action of $M_2(Q)$ on $\ell_Q$.
The quotient $(M_1(P)\cap M_2(Q)\cap M_3(T))/C_{q+1}$ acts on $\ell_Q$ as a subgroup of $\PSL(2,q)$ fixing $P$ and interchanging $R_1$ and $R_2$. From \cite[Hauptsatz 8.27]{Hup} follows $(M_1(P)\cap M_2(Q)\cap M_3(T))/C_{q+1} \cong C_2$, and hence $H=M_1(P)\cap M_2(Q)\cap M_3(T)$.
\item Let $H=C_{q+1}=Z(M_2(P))$. Then $H$ is made by the homologies of $G$ with center $P$, together with the identity.
Thus, $H=M_1(P_1)\cap M_1(P_2)\cap M_1(P_3)$, where $P_1,P_2,P_3$ are distinct point in $\ell_P \cap\cH_q$.
\item Let $H=E_q$ and let $P$ be the unique point of $\cH_q(\fqs)$ fixed by any element in $H$. Then $H=M_2(P_1)\cap M_2(P_2)\cap M_2(P_3)$, where $P_1,P_2,P_3$ are distinct points in $\ell_P(\fqs)\setminus\{P\}$.
\item Let $H=C_2$, $\alpha$ be a generator of $H$ with fixed point $P\in\cH_q(\fqs)$, and $P_1,P_2,P_3\in\ell_P(\fqs)\setminus\{P\}$.
Let $T=\{P_1,Q_{1,1},R_{1,1}\}$ be a self-polar triangle stabilized by $\alpha$.
Then $H\leq M_2(P_1)\cap M_2(P_2)\cap M_2(P_3)\cap M_3(T)$.
Since the elation $\alpha$ is uniquely determined by the image of one point not on its axis $\ell_P$, $H\leq M_3(T)$ implies $H=M_2(P_1)\cap M_2(P_2)\cap M_2(P_3)\cap M_3(T)$.
\item Let $H=C_3$. By Lemma \ref{C3fissatriangoli}, $H$ stabilizes $\frac{2(q^2-1)}{3}$ triangles $\tilde{T}\subset\cH_q(\mathbb{F}_{q^6})\setminus\cH_q(\fqs)$; let $\tilde T_1$ and $\tilde T_2$ be two of them.
Then $H\leq M_4(\tilde T_1)\cap M_4(\tilde T_2)$. If $H < M_4(\tilde T_1)\cap M_4(\tilde T_2)$, then there exist a nontrivial $\sigma\in G$ stabilizing pointwise both $\tilde T_1$ and $\tilde T_2$, a contradiction to Lemma \ref{classificazione}.
Thus, $H= M_4(\tilde T_1)\cap M_4(\tilde T_2)$.
\item Let $H=Sym(3)$. By Lemma \ref{S3fissatriangoli}, $H$ stabilizes $q+1$ self-polar triangles $T_{1},\ldots,T_{q+1}$, so that $H\leq M_3(T_1)\cap\cdots\cap M_3(T_{q+1})$. Suppose by contradiction that $H \ne M_3(T_1)\cap\cdots\cap M_3(T_{q+1})$. Then $ M_3(T_1)\cap\cdots\cap M_3(T_{q+1})$ contains a nontrivial element $\sigma$ fixing every triangle $T_i$ pointwise.
Since the triangles $T_i$'s do not have vertexes in common, this is a contradiction to Lemma \ref{classificazione}. Thus, $H = M_3(T_1)\cap\cdots\cap M_3(T_{q+1})$.
\item Let $H=\{1\}$. Since $G$ is simple, $H$ is the intersection of all maximal subgroups of $G$.
\end{itemize}
\end{proof}

\begin{proposition}\label{soloquellisonointersezioni}
If $H<G$ is the intersection of maximal subgroups, then $H$ is one of the groups in Equation \eqref{tipologia}.
\end{proposition}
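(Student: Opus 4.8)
The plan is to prove this statement as the converse of Proposition \ref{sonointersezioni}. First I would make the standard reduction: if $H$ is an intersection of maximal subgroups, then $H$ coincides with the intersection of \emph{all} maximal subgroups containing it, since that intersection is contained in any particular one. So it suffices to understand, for a given $H<G$, the family of maximal subgroups containing it, and to show that unless $H$ occurs in Equation \eqref{tipologia} this intersection is strictly larger than $H$. By Theorem \ref{Hartley} every maximal subgroup is the stabilizer of a geometric object — an isotropic point of $\cH_q(\fqs)$, a point of $\PG(2,q^2)\setminus\cH_q$, a non-degenerate self-polar triangle, or a triangle of $\cH_q(\F_{q^6})\setminus\cH_q(\fqs)$ — so the maximal subgroups containing $H$ are exactly the stabilizers of the objects of these four kinds fixed (resp. stabilized) by $H$, and $H=\bigcap_{M\supseteq H}M$ is precisely the full stabilizer of this configuration. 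The fixed objects of a single element are read off from Lemma \ref{classificazione}, which is the geometric engine of the whole argument.

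The core technique is then: for a subgroup $H$ not listed in \eqref{tipologia}, produce a strictly larger subgroup $H'$ stabilizing exactly the same configuration of points and triangles; then $H$ and $H'$ lie in the same maximal subgroups, whence $H\subsetneq H'\le\bigcap_{M\supseteq H}M$ and $H$ is not an intersection of maximals. I would organize the search for such $H'$ first according to the $2$-part of $H$, exploiting that the four maximal types have very different Sylow $2$-subgroups: $M_1$ contains the full Sylow $2$-subgroup $S_2$, with its elements of order $4$ of type (D) and centre $Z(S_2)=E_q$; $M_2\cong\PSL(2,q)\times C_{q+1}$ has elementary abelian Sylow $2$-subgroup $E_q$ of exponent $2$; while $M_3$ has $2$-part $2$ and $M_4$ has odd order. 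Consequently any subgroup whose $2$-part contains an element of order $4$, or a group of order $>q$, lies only in conjugates of $M_1$; since a type (C) or (D) element determines a unique isotropic point $P$, such an $H$ (for instance $S_2$, $C_4$, or a subgroup of $S_2$ properly containing $E_q$) lies in the unique maximal $M_1(P)$ and is excluded. Likewise a proper subgroup $E_{2^k}$ of $E_q=Z(S_2)$ with $2<2^k<q$ fixes $\ell_P$ pointwise just as $E_q$ does, and being of order $\ge4$ cannot lie in any $M_3$; so it lies in the same maximals as $E_q$ and $\bigcap_{M\supseteq E_{2^k}}M=E_q$. The two $2$-groups that do survive, $E_q$ and $C_2$, are exactly those recovered in Proposition \ref{sonointersezioni}, the involution $C_2$ being special because a single elation stabilizes self-polar triangles (Lemma \ref{involuzionefissatriangoli}), allowing an $M_3$ to cut $E_q=M_2(P_1)\cap M_2(P_2)\cap M_2(P_3)$ down to $C_2$.

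For the odd part I would combine the fixed-point data of Lemma \ref{classificazione} with the counting Lemmas \ref{C3fissatriangoli} and \ref{S3fissatriangoli}. Any cyclic group $C_d\le C_{q+1}=Z(M_2(R))$ of homologies with centre $R$ fixes the chord $\ell_R$ pointwise; since $3\nmid(q+1)$ it contains no element of order $3$ and hence, like $C_{q+1}$ itself, can stabilize neither an $\F_{q^6}$-triangle nor any self-polar triangle other than those with $R$ as a vertex. Thus $C_d$ and $C_{q+1}$ stabilize the same configuration, so $\bigcap_{M\supseteq C_d}M=C_{q+1}$ and every proper $C_d<C_{q+1}$ is excluded. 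A cyclic group generated by a type (B2) element of order $d\mid(q^2-1)$, $d\nmid(q+1)$, fixes exactly the frame $\{P,Q,R\}$, so its maximal overgroups are $M_1(P),M_1(Q),M_2(R)$, whose intersection is $M_1(P)\cap M_1(Q)=C_{q^2-1}$; hence every such $C_d$ with $d\ne q^2-1$ is strictly smaller than $C_{q^2-1}$ and is excluded, with the single exception $d=3$. Order $3$ is genuinely special: an element $\sigma$ with $\sigma^3=1$ stabilizes every triangle $\{X,\Phi_{q^2}(X),\Phi_{q^2}^2(X)\}$ on which it acts as a $3$-cycle, and by Lemma \ref{C3fissatriangoli} it stabilizes enough $\F_{q^6}$-triangles that two of the corresponding $M_4$'s already intersect in $C_3$; no element of order $9$ or of order $d>3$ can act this way, which is exactly why only $C_3$ survives among these cyclic groups. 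Finally, a type (B3) element fixes no $\fqs$-point and no self-polar triangle and stabilizes a unique $\F_{q^6}$-triangle, so $C_{q^2-q+1}$ lies in the unique maximal $M_4(T)$ and is excluded, whereas $Sym(3)$ is recovered through Lemma \ref{S3fissatriangoli}.

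The hard part will be ensuring the case analysis is exhaustive: one must run this ``replace $H$ by a larger group with the same fixed configuration'' argument over every isomorphism type of subgroup of $G$, in particular over the many subgroups of $M_1=S_2\rtimes C_{q^2-1}$ and of $M_2=\PSL(2,q)\times C_{q+1}$ — for example $\PSL(2,q)$ itself (which fixes no isotropic point and lies in the unique maximal $M_2(P)$), the groups $E_q\rtimes C_d$, and the mixed cyclic groups such as $C_{2d}$ and $C_{3(q+1)}$ — and to verify in each case the precise list of maximal subgroups containing $H$. This bookkeeping rests throughout on Lemma \ref{classificazione} to determine the fixed points of the elements of $H$, on Proposition \ref{unaclassediconiugio} to reduce to one representative per isomorphism type, and on the normalizer computations of Proposition \ref{normalizzante} to detect when the intersection fails to shrink. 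Assembling these into a complete, non-redundant enumeration is the principal difficulty, while each individual case is a routine geometric verification.
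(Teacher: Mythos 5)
Your opening reduction ($H$ an intersection of maximals implies $H=\bigcap_{M\supseteq H}M$) is valid, and the sampled cases you work out are correct: the exclusion of $2$-groups containing type (D) elements, of proper subgroups $E_{2^k}<E_q$ with $4\leq 2^k<q$, of proper homology subgroups $C_d<C_{q+1}=Z(M_2(R))$, of type (B2) cyclic groups with the single exception $d=3$, and of (B3) subgroups all check out against Lemma \ref{classificazione}. But the proposal has a genuine structural gap, which you yourself flag as ``the principal difficulty'' and then leave open: to run the argument ``replace $H$ by a strictly larger group stabilizing the same configuration'' over \emph{every} $H$ not in \eqref{tipologia}, you need a complete enumeration of the conjugacy classes (or at least isomorphism types together with their embeddings) of \emph{all} subgroups of $G$. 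Theorem \ref{Hartley}, the only classification input available in the paper, lists just the maximal subgroups; the full subgroup classification of $\PSU(3,q)$ — the many subgroups of $S_2\rtimes C_{q^2-1}$, the subfield, dihedral, subdirect and mixed subgroups of $\PSL(2,q)\times C_{q+1}$ such as $\PSL(2,2^{2^m})\times C_d$ or $E_{2^r}\rtimes C_d$, and so on — is neither carried out nor cited in your plan. As written, the proof is a correct strategy executed for a handful of families and an unverified exhaustiveness claim for the rest.

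The paper closes exactly this hole by a different and much lighter induction, and this is the idea your plan is missing: any intersection of maximal subgroups is an \emph{iterated} intersection $(\cdots(M_{i_1}\cap M_{i_2})\cap\cdots)\cap M_{i_k}$, so it suffices to prove that the family \eqref{tipologia} contains the maximal subgroups and is closed under intersecting with one further maximal subgroup — i.e., that $K\cap M_i$ lies in the list for each listed $K$ and each maximal $M_i\not\geq K$, with the small residual cases absorbed by the remark that every subgroup of $Sym(3)$ is listed. This replaces the global subgroup classification your approach requires by finitely many geometric computations involving only the groups already in hand, using the very tools you deploy case by case (Lemma \ref{classificazione}, Lagrange's theorem, fixed-configuration reasoning). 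To complete your proof you must either import the full subgroup classification of $G$ from the literature, or reorganize it along this closure argument; without one of the two, exhaustiveness — the actual content of the proposition — remains unproved.
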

\begin{proof}
We proceed as follows: we take every subgroup $K<G$ in Equation \eqref{tipologia}, starting from the maximal subgroups $M_i$ of $G$; we consider the intersections $H=K\cap M_i$ of $K$ with the maximal subgroups of $G$; here, we assume that $K\not\leq M_i$. We show that $H$ is again one of the groups in Equation \eqref{tipologia}.
\begin{itemize}
\item Let $K=S_2\rtimes C_{q^2-1}=M_1(P)$ for some $P\in\cH_q(\fqs)$.
\begin{itemize}
\item
Let $H=K\cap M_1(Q)$, $Q\ne P$. Then $H$ is the pointwise stabilizer of $\{P,Q\}\subset\cH_q(\fqs)$, which is cyclic of order $q^2-1$, i.e. $H=C_{q^2-1}$.
\item
Let $H=K\cap M_2(Q)$. Suppose $Q\in\ell_P$. Then $H=E_{q^2}\rtimes C_{q^2-1}$, where $E_{q^2}$ is made by the elations with axis $PQ$ and $C_{q^2-1}$ is generated by an element of type (B2) with fixed points $Q$, $P$, and another point $R\in\ell_Q$. % Thus, $H$ is in Equation \eqref{tipologia}.
Now suppose $Q\notin\ell_P$. Then $H$ stabilizes $\ell_Q$ and hence also the point $R=\ell_P \cap \ell_Q$. Then $H$ stabilizes $QR$ and hence also the pole $A$ of $QR$; by reciprocity, $A\in PQ$. Thus, $H$ fixes three collinear point $A,P,Q$, and hence every point on $AP$. Then $H=C_{q+1}=Z(M_2(R))$. % is in Equation \eqref{tipologia}.
\item Let $H=K\cap M_3(T)$, $T=\{A,B,C\}$, with $P$ on a side of $T$, say $P\in AB$. Then $H$ fixes $C$ and acts on $\{A,B\}$. Thus, $H$ is generated by an element of type (E) with fixed points $P,C$ and fixed lines $PC,AB$; hence, $H=C_{2(q+1)}$. % hence, $H$ is in Equation \eqref{tipologia}.
\item Let $H=K\cap M_3(T)$, $T=\{A,B,C\}$, with $P$ out of the sides of $T$. By reciprocity, no vertex of $T$ lies on $\ell_P$. This implies that no elation acts on $T$, so that $2\nmid|H|$; this also implies that no homology in $M_3(T)$ fixes $P$, so that $H$ has no nontrivial elements fixing $T$ pointwise.
Thus $H\leq C_3$. % is in Equation \eqref{tipologia}.
\item Let $H=K\cap M_4(T)$. By Lagrange's theorem, $H\leq C_3$. % is in Equation \eqref{tipologia}.
\end{itemize}

\item Let $K=\PSL(2,q)\times C_{q+1}=M_2(P)$ for some $P\in\PG(2,q^2)\setminus\cH_q$.
\begin{itemize}
\item Let $H=K\cap M_2(Q)$, $Q\ne P$, and $R$ be the pole of $PQ$.
If $R\in PQ$, then $H$ is the pointwise stabilizer of $PQ$ and is made by the elations with center $R$; thus, $H=E_q$. % is in Equation \eqref{tipologia}.
If $R\notin PQ$, then $H$ is the pointwise stabilizer of $T=\{P,Q,R\}$; thus, $H=C_{q+1}\times C_{q+1}$. % is in Equation \eqref{tipologia}.
\item Let $H=K\cap M_3(T)$ with $T=\{A,B,C\}$.
If $P$ is a vertex of $T$, then $H=(C_{q+1}\times C_{q+1})\rtimes C_2$. % is in Equation \eqref{tipologia}.
If $P$ is on a side of $T$ but is not a vertex, say $P\in AB$, then $H$ fixes the pole $D\in AB$ of $C$. Then $H$ fixes pointwise $T^\prime=\{P,C,D\}$ and acts on $\{A,B\}$. This implies that $H$ fixes $AB$ pointwise and $H=C_{q+1}=Z(M_2(C))$. % is in Equation \eqref{tipologia}.
If $P$ is out of the sides of $T$, then no nontrivial element of $H$ fixes $T$ pointwise; thus, $H\leq Sym(3)$. % is in Equation \eqref{tipologia}.
\item Let $H=K\cap M_4(T)$. By Lagrange's theorem, $H\leq C_3$. % is in Equation \eqref{tipologia}.
\end{itemize}

\item Let $K=(C_{q+1}\times C_{q+1})\rtimes Sym(3)=M_3(T)$ for some self-polar triangle $T=\{A,B,C\}$.
\begin{itemize}
\item Let $H=K\cap M_3(T^\prime)$ with $T^\prime=\{A^\prime,B^\prime,C^\prime\}\ne T$.
If $T$ and $T^{\prime}$ have one vertex $A=A^\prime$ in common, then $H=C_{2(q+1)}$ is generated by an element of type (E) fixing $A$ and a point $D\in BC=B^\prime C^\prime$.
If $A^\prime\in AC\setminus\{A,C\}$, then $H$ stabilizes $B^\prime C^{\prime}$, because $B^\prime C^\prime$ is the only line containing $4$ points of $\{A,B,C,A^\prime,B^\prime,C^\prime\}$. Then $H$ fixes $A^\prime$, $A$, and $C$; hence also $B$. Since $H$ acts on $\{B^\prime,C^\prime\}$, $H$ cannot be made by nontrivial homologies of center $B$; thus, $H=\{1\}$.
\item Let $H=K\cap M_4(T^\prime)$. By Lagrange's theorem, $H\leq C_3$. % is in Equation \eqref{tipologia}.
\end{itemize}

\item Let $K=C_{q^2-q+1}\rtimes C_3 =M_4(T)$ for some $T\subset\cH_q(\mathbb{F}_{q^6})$.
Let $H=K\cap M_4(T^\prime)$ with $T^\prime T$. Since $3$ does not divide the order of the pointwise stabilized $C_{q^2-q+1}$ of $T$, $H$ contains no nontrivial elements fixing $T$ or $T^\prime$ pointwise. Thus, $H\leq C_3$. % is in Equation \eqref{tipologia}.

\item Let $K=E_q\rtimes C_{q^2-1}$ and $P\in\cH_q(\fqs)$, $Q\in\ell_P\setminus\{P\}$ be the fixed points of $K$.
\begin{itemize}
\item Let $H=K\cap M_1(R)$ with $R\ne P$.
If $R\in \ell_Q$, then $H=C_{q^2-1}$. % is in Equation \eqref{tipologia}.
If $R\notin\ell_Q$, then $H$ fixes the pole $S$ of $PR$; by reciprocity $S\in PQ$, so that $H$ fixes $PQ$ pointwise and also $R\notin PQ$. Thus, $H=\{1\}$.
\item Let $H=K\cap M_2(R)$ with $R\ne Q$.
If $R\in\ell_P$, then $H$ is the pointwise stabilizer $E_q$ of $PQ$. %, which is in Equation \eqref{tipologia}.
If $R\notin\ell_P$, then $H$ fixes pointwise the self-polar triangle $\{Q,R,S\}$ where $S$ is the pole of $QR$. Hence, either $H=C_{q+1}=Z(M_2(Q))$ or $H=\{1\}$ according to $P\in RS$ or $P\notin RS$, respectively. %; thus, $H$ is in Equation \eqref{tipologia}.
\item Let $H=K\cap M_3(T)$ with $T=\{A,B,C\}$.
If $P$ is on a side of $T$, say $P\in BC$, then either $H=\{1\}$ or $H=C_{q+1}=Z(M_2(A))$. %; thus, $H$ is in Equation \eqref{tipologia}.
If $P$ is out of the sides of $T$, then no nontrivial element of $H$ can fix $T$ pointwise; thus, $H\leq Sym(3)$. % is in Equation \eqref{tipologia}.
\item Let $H=K\cap M_4(T)$. By Lagrange's theorem, $H\leq C_3$. % is in Equation \eqref{tipologia}.
\end{itemize}

\item Let $K=(C_{q+1}\times C_{q+1})\rtimes C_2=M_3(T)\cap M_2(A)$, where $T=\{A,B,C\}$.
\begin{itemize}
\item Let $H=K\cap M_1(P)$.
If $P\in BC$, then $H=C_{2(q+1)}$ is generated by an element of type (E). % and is in Equation \eqref{tipologia}.
If $P\notin BC$, then $H=\{1\}$.
\item Let $H=K\cap M_2(P)$, $P\ne A$.
If $P\in\{B,C\}$, then $H$ is the pointwise stabilizer $C_{q+1}\times C_{q+1}$ of $T$. %, which is in Equation \eqref{tipologia}.
If $P\in\ AB\setminus\{A,B\}$ or $P\in\ AC\setminus\{A,C\}$, then $H=C_{q+1}=Z(M_2(C))$ or $H=C_{q+1}=Z(M_2(B))$, respectively.
If $P\in BC\setminus\{B,C\}$, then $H$ fixes $A$, $P$, the pole of $AP$, and acts on $\{B,C\}$; thus, $H=C_{q+1}=Z(M_2(A))$.
If $P$ is not on the sides of $T$, then no nontrivial element of $H$ can fix $T$ pointwise; thus, $H\leq C_2$.
\item Let $H=K\cap M_3(T^{\prime})$ with $T^\prime=\{A^\prime,B^\prime,C^\prime\}\ne T$. Since $3\nmid|H|$, $H$ fixes a vertex of $T^\prime$, say $A^\prime$.
If $A^\prime =A$, then $H=C_{2(q+1)}$.
If $A^\prime\in \{B,C\}$, then $H$ fixes $T$ pointwise and acts on $\{B^\prime,C^\prime\}$; thus, $H=C_{q+1}=Z(M_2(A^\prime))$.
If $A^\prime\in (AB\cup AC)\setminus\{A,B,C\}$, then $H$ fixes $AB$ or $AC$ pointwise and acts on $\{B^\prime,C^\prime\}$; thus, $H=\{1\}$.
If $A^\prime\in BC$, then $H$ fixes $A$, $A^\prime$, and the pole $D$ of $A A^\prime$; as $H$ acts on $\{B,C\}$, this implies $H=\{1\}$.
If $A^\prime$ is not on the sides of $T$, then no nontrivial element of $H$ fixes $T$ pointwise and $H\leq C_2$.
\item Let $H=K\cap M_4(T^\prime)$. By Lagrange's theorem, $H\leq C_3$.
\end{itemize}

\item Let $K=C_{q+1}\times C_{q+1}=M_3(T)\cap M_2(A)\cap M_2(B)\cap M_2(C)$ with $T=\{A,B,C\}$.
\begin{itemize}
\item Let $H=K\cap M_1(P)$ or $H=K\cap M_2(P)$. If $P$ is not on the sides of $T$, then $H=\{1\}$; if $P$ is on a side of $T$, say $P\in BC$, then $H=C_{q+1}=Z(M_2(A))$.
\item Let $H=K\cap M_3(T^\prime)$ with $T^\prime=\{A^\prime,B^\prime,C^\prime\}$. Since $K$ is not divisible by $2$ or $3$, $H\ne\{1\}$ only if $H$ fixes $T^\prime$ pointwise. Up to relabeling, this implies $A^\prime =A$, $B^\prime, C^\prime \in BC$, and $H=C_{q+1}=Z(M_2(A))$.
\item Let $H=K\cap M_4(T^\prime)$. By Lagrange's theorem, $H=\{1\}$.
\end{itemize}

\item Let $K=C_{q^2-1}=\langle\alpha\rangle$, with $\alpha$ of type (B2) fixing the points $P\in\PG(2,q^2)\setminus\cH_q$ and $Q,R\in\cH_q(\fqs)$.
\begin{itemize}
\item Let $H=K\cap M_1(A)$ or $H=K\cap M_2(A)$. Since the nontrivial elements of $H$ are either of type (B2) or of type (A) with axis $QR$, we have $H=\{1\}$ unless $A\in QR$; in this case, $H=C_{q+1}=Z(M_2(P))$.
\item Let $H=K\cap M_3(T)$ or $H=K\cap M_4(T)$. By Lagrange's theorem, $H\leq C_3$.
\end{itemize}

\item Let $K=C_{2(q+1)}=\langle\alpha\rangle$ with $\alpha$ of type (E) fixing the points $P\in\cH_q(\fqs)$ and $Q\in\PG(2,q^2)\setminus\cH_q$.
\begin{itemize}
\item Let $H=K\cap M_1(R)$ or $H=K\cap M_2(R)$.
If $R\in\ell_Q$, then $H=C_{q+1}=Z(M_2(Q))$. If $R\notin \ell_Q$, then $H=\{1\}$.
\item Let $H=K\cap M_3(T)$; recall that $H<K$.
If $Q$ is a vertex of $T$, then $H=C_{q+1}=Z(M_2(Q))$.
If $Q$ is not a vertex of $T$, then no homology in $K$ acts on $T$; hence, $H\leq C_2$.
\item Let $H=K\cap M_4(T)$. By Lagrange's theorem, $H=\{1\}$.
\end{itemize}

\item Let $K=C_{q+1}=Z(M_2(P))$ for some $P\in \PG(2,q^2)\setminus\cH_q$ and $\sigma\in K\setminus\{1\}$.
Then $\sigma$ fixes no points out of $\{P\}\cup\ell_P$; also, the triangles fixed by $\sigma$ have one vertex in $P$ and two vertexes on $\ell_P$.
Thus, $K\cap M_i=\{1\}$ for amy maximal subgroup $M_i$ of $G$ not containing $K$.

\item Let $K=E_q$ and $\sigma\in E_q\setminus\{1\}$. Recall that $K$ fixes one point $P\in\cH_q(\fqs)$ and the line $\ell_P$ pointwise.
Also, $\sigma$ fixes no points out of $\ell_P$. If $\sigma$ fixes a triangle $T=\{A,B,C\}$, then one vertex of $T$ lies on $\ell_P(\fqs)$, say $A$, and $\sigma$ is uniquely determined by $\sigma(B)=C$.
Thus, $K\cap M_1(Q)=K\cap M_2(Q)=K\cap M_4(T^\prime)=\{1\}$ and $K\cap M_3(T)\leq C_2$.

\item Let $K\in\{Sym(3),C_3,C_2,\{1\}\}$. Then every subgroup of $K$ is in Equation \eqref{tipologia}.
\end{itemize}
\end{proof}

\begin{proposition}\label{valoridimu}
The values $\mu(H)$ for the groups in Equation \eqref{tipologia} are given in Equation \eqref{tipologiamu}.
\end{proposition}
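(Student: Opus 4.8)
The plan is to compute each value $\mu(H)$ directly from the defining recursion $\sum_{K\geq H}\mu(K)=0$, rewritten as $\mu(H)=-\sum_{H<K}\mu(K)$ with $K$ ranging over all subgroups strictly containing $H$. By Lemma \ref{intersezione} together with Propositions \ref{sonointersezioni} and \ref{soloquellisonointersezioni}, every $K$ with $\mu(K)\neq0$ is conjugate to one of the groups in Equation \eqref{tipologia}, and by Proposition \ref{unaclassediconiugio} each such type forms a single conjugacy class; since $\mu$ is constant on conjugacy classes, it is a well-defined function of the type. The sum therefore collapses to
$$\mu(H)=-\sum_{T}n_T(H)\,\mu(T),$$
where $T$ runs over the types of \eqref{tipologia} with $\mu(T)\neq0$ that strictly contain $H$, and $n_T(H)$ is the number of subgroups of $G$ of type $T$ containing the fixed $H$. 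I would process the types in decreasing order in the subgroup poset, so that each $\mu(T)$ on the right has already been determined.

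The base of the recursion is immediate: for each maximal subgroup $M_i$ of Theorem \ref{Hartley} the only overgroup is $G$, so $\mu(M_i)=-\mu(G)=-1$, which gives the first four entries of \eqref{tipologiamu}. For the remaining types the entire content is the determination of the incidence numbers $n_T(H)$, which I would obtain geometrically. When $T$ is maximal, $n_T(H)$ counts the objects stabilized by $H$: by Theorem \ref{Hartley} an $M_1$, $M_2$, $M_3$, or $M_4$ containing $H$ corresponds respectively to a fixed point of $\cH_q(\fqs)$, a fixed point of $\PG(2,q^2)\setminus\cH_q$, a stabilized self-polar triangle, or a stabilized triangle in $\cH_q(\F_{q^6})\setminus\cH_q(\fqs)$, and all of these are read off from the fixed-point structure of $H$ given in Lemma \ref{classificazione}. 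For intermediate $T$ the count uses the explicit intersection descriptions of Proposition \ref{sonointersezioni}. For example $E_q\rtimes C_{q^2-1}$ lies in a unique $M_1$ and a unique $M_2$ and (by order and parity constraints) in no other nonzero type, whence $\mu=-(\mu(G)+\mu(M_1)+\mu(M_2))=-(1-1-1)=1$; likewise $(C_{q+1}\times C_{q+1})\rtimes C_2$ lies in a unique $M_2$ and a unique $M_3$, giving $\mu=1$.

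The genuinely laborious cases are the small subgroups $C_3$, $Sym(3)$, and above all $C_2$, where many intermediate types sit above $H$ and the incidence numbers depend on $q$. Here the three counting lemmas are essential: Lemma \ref{C3fissatriangoli} supplies $n_{M_3}(C_3)=\frac{q^2-1}{3}$ and $n_{M_4}(C_3)=\frac{2(q^2-1)}{3}$, Lemma \ref{S3fissatriangoli} gives $n_{M_3}(Sym(3))=q+1$, and Lemma \ref{involuzionefissatriangoli} gives the $q^3/2$ self-polar triangles stabilized by an involution. I would supplement these with the normalizer data of Proposition \ref{normalizzante}: for instance $n_{Sym(3)}(C_3)=\frac{q-1}{3}$ follows by counting the involutions in $N_G(C_3)=C_{q^2-1}\rtimes C_2$ that invert a generator, and $n_{Sym(3)}(C_2)=q^3/2$ follows from a global double count of incidences between involutions and $Sym(3)$-subgroups (using $|N_G(Sym(3))|=6(q+1)$) together with the homogeneity of the involution class. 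Assembling the contributions for $C_3$ then yields
$$\mu(C_3)=-\Big(1-2-1-\tfrac{q^2-1}{3}-\tfrac{2(q^2-1)}{3}+2+\tfrac{q-1}{3}(q+1)\Big)=\tfrac{2(q^2-1)}{3}.$$

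The hardest case, and the most delicate point of the argument, is $C_2$, where the large incidence numbers cancel in pairs: the $q^2$ subgroups of type $M_2$ arising from the $q^2$ off-curve points of the axis $\ell_P$ cancel against the $q^2$ subgroups of type $E_q\rtimes C_{q^2-1}$ with the same fixed points, the equal counts $n_{M_3}(C_2)=n_{(C_{q+1}\times C_{q+1})\rtimes C_2}(C_2)=q^3/2$ coming from Lemma \ref{involuzionefissatriangoli} cancel, and $n_G=n_{M_1}=1$ cancel. Only the $Sym(3)$ term survives, giving $\mu(C_2)=-(q+1)\cdot\frac{q^3}{2}=-\frac{q^3(q+1)}{2}$. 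The remaining entries with value $0$, namely the types $C_{q+1}\times C_{q+1}$, $C_{q^2-1}$, $C_{2(q+1)}$, $C_{q+1}=Z(M_2(P))$, $E_q$, and $\{1\}$, are treated by the same bookkeeping, the vanishing again arising from exact cancellation among the overgroup contributions. I expect the principal obstacle to be precisely this cancellation bookkeeping for the small subgroups: correctly enumerating every type above $H$, computing each $n_T(H)$ without double counting or omission, and confirming that the alternating contributions sum to the claimed values.
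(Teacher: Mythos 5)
Your proposal is correct and takes essentially the same route as the paper: a top-down evaluation of $\mu(H)=-\sum_{K>H}\mu(K)$ restricted, via Lemma \ref{intersezione} and Propositions \ref{unaclassediconiugio}--\ref{soloquellisonointersezioni}, to the conjugacy types of Equation \eqref{tipologia}, with the incidence numbers taken from Lemmas \ref{involuzionefissatriangoli}, \ref{C3fissatriangoli}, \ref{S3fissatriangoli} and the same double counts giving $n_{Sym(3)}(C_3)=\frac{q-1}{3}$ and $n_{Sym(3)}(C_2)=\frac{q^3}{2}$, and your explicit evaluations for $C_3$ and $C_2$ agree with the paper's. Indeed your bookkeeping for $C_2$ (using $\mu(E_q\rtimes C_{q^2-1})=1$, so that the $q^2$ subgroups of type $M_2$ through points of $\ell_P$ cancel against the $q^2$ subgroups $E_q\rtimes C_{q^2-1}$) is the internally consistent one --- the paper's proof misprints $\mu(K)=0$ for those intermediate groups, which would contradict its own final value --- while the zero-valued cases you leave as ``the same bookkeeping'' are routine, the lengthiest being $H=C_{q+1}=Z(M_2(P))$.
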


\begin{proof}
Let $H$ be one of the groups in Equation \eqref{tipologia}. By Lemma \ref{intersezione} and Proposition \ref{soloquellisonointersezioni}, $\mu(H)$ only depends on the subgroups $K$ of $G$ such that $H<K$ and $K$ is in Equation \eqref{tipologia}.

\begin{itemize}
\item If $H$ is in the first row of Equation \eqref{tipologia}, then $H$ is maximal in $G$, and hence $\mu(H)=-1$.

\item Let $H=E_q\rtimes C_{q^2-1}$. Let $P\in\cH_q(\fqs)$ and $Q\in\PG(2,q^2)\setminus\cH_q$ be the fixed points of $H$.
Then $H=M_1(P)\cap M_2(Q)$ and $H$ is not contained in any other maximal subgroup of $G$. Thus, $\mu(H)=-\{\mu(G)+\mu(M_1(P))+\mu(M_2(Q))\}=1$.

\item Let $H=(C_{q+1}\times C_{q+1})\rtimes C_2$.
Let $T=\{P,Q,R\}$ be the self-polar triangle stabilized by $H$, with $H(P)=P$.
No point different from $P$ is fixed by $H$. Also, if a triangle $T^\prime=\{P^\prime,Q^\prime\} \ne T$ is fixed by $H$, then $P$ is a vertex of $T^\prime$, say $P=P^\prime$, and $\{Q^\prime,R^\prime\}\subset QR$; but $C_{q+1}\times C_{q+1}$ has orbits of length $q+1>|\{Q^\prime,R^\prime\}|$, so that $H$ cannot fix $T^\prime$.
Then $H=M_2(P)\cap M_3(T)$ and $H$ is not contained in any other maximal subgroup of $G$.
Thus, $\mu(H)=1$.

\item Let $H=C_{q+1}\times C_{q+1}$ and $T=\{P,Q,R\}$ be the self-polar triangle fixed pointwise by $H$.
The vertexes of $T$ are the unique fixed points of the elements of type (B1) in $H$.
Also, any triangle $T^\prime\ne T$ fixed by an element of type (A) in $H$ has two vertexes on a side $\ell$ of $T$; but $H$ has orbits of length $q+1>2$ on $\ell$, so that $H$ does not fix $T^\prime$.
Then $H=M_3(T)\cap M_2(P)\cap M_2(Q)\cap M_2(R)$ and $H$ is not contained in any other maximal subgroup of $G$.

If $K$ is one of the groups $M_3(T)\cap M_2(P)$, $M_3(T)\cap M_2(P)$, $M_3(T)\cap M_2(P)$, then $K$ contains $H$ properly, and $\mu(K)=1$ as shown in the previous point.
The intersection of three groups between $M_3(T)$, $M_2(P)$, $M_2(Q)$, and $M_2(R)$ is equal to $H$.
Thus, by direct computation, $\mu(H)=0$.

\item Let $H=C_{q^2-1}$ with fixed points $P\in\PG(2,q^2)\setminus\cH_q$ and $Q,R\in\cH_q(\fqs)$. Then $H=M_1(Q)\cap M_1(R) = M_1(Q)\cap M_1(R)\cap M_2(P)$.
We already know $\mu(M_1(Q)\cap M_2(P))=\mu(M_1(R)\cap M_2(P))=1$.
Moreover, $C_{q^2-1}$ has no fixed triangles, by Lagrange's theorem, and no other fixed points.
Thus, by direct computation, $\mu(H)=0$.

\item Let $H=C_{2(q+1)}=\langle\alpha\rangle$; $\alpha$ is of type (E), fixes the points $P\in\cH_q(\fqs)$ and $Q\in PG(2,q^2)\setminus\cH_q$, and fixes the lines $\ell_P$ and $\ell_Q$.
Since $\alpha^2$ is a homology with center $Q$, the orbits on $\ell_Q$ of $H$ coincide with the orbits on $\ell_Q$ of the elation $\alpha^{q+1}$.
By Lemma \ref{involuzionefissatriangoli}, the self-polar triangles $T_i$ stabilized by $H$ have a vertex in $Q$ and two vertexes on $\ell_Q$; there are exactly $\frac{q}{2}$ such triangles $T_1,\ldots,T_{\frac{q}{2}}$. No other triangle and no other point different from $P$ and $Q$ is fixed by $H$, so that $H=M_1(P)\cap M_2(Q)\cap M_3(T_1)\cap\cdots\cap M_3(T_{\frac{q}{2}})$ and $H$ is not contained in any other maximal subgroup of $G$.

If $K$ is the intersection of $M_2(Q)$ with one of the groups $M_1(P),M_3(T_1),\ldots, M_3(T_{\frac{q}{2}})$, then $K=E_q\rtimes C_{q^2-1}$ or $K=(C_{q+1}\times C_{q+1})\rtimes C_2$; hence, $K$ contains $H$ properly and $\mu(K)=1$ as shown above.
The intersection of $K$ with a third maximal subgroup of $G$ containing $H$ coincides with $H$.
Finally, the intersection of any two groups in $\{M_1(P),M_3(T_1),\ldots,M_3(T_{\frac{q}{2}})\}$ coincides with $H$.
Thus, by direct computation, $\mu(H)=0$.

\item Let $H=C_{q+1}=Z(M_2(P))$. Denote $\ell_P\cap\cH_q=\{P_1,\ldots,P_{q+1}\}$ and $\ell(\fqs)\setminus\cH_q=\{Q_1,\ldots,Q_{q^2-q}\}$ such that, for $i=1,\ldots,\frac{q^2-q}{2}$, $T_{i}=\{P,Q_i,Q_{i+\frac{q^2-q}{2}}\}$ are the self-polar triangles with a vertex in $P$.
Then
$$H=M_1(P_1)\cap\cdots\cap M_1(P_{q+1})\cap M_2(P)\cap M_2(Q_1)\cap\cdots\cap M_2(Q_{q^2-q})\cap M_3(T_1)\cap\cdots\cap M_3(T_{\frac{q^2-q}{2}})$$
and $H$ is not contained in any other maximal subgroup of $G$.
By direct inspection, the intersections $K$ of some (at least two) maximal subgroups of $G$ such that $H<K<G$ are exactly the following.
\begin{itemize}
\item $K=M_1(P_i)\cap M_1(P_j)$ for some $i\ne j$; in this case, $K=C_{q^2-1}$ and $\mu(K)=0$.
\item $K=M_1(P_i)\cap M_2(P)$ with $i\in\{1,\ldots,q+1\}$; in this case, $K=E_q\rtimes C_{q^2-1}$ and $\mu(K)=1$. These $q+1$ groups are pairwise distinct.
\item $K=M_1(P_i)\cap M_3(T_j)$ for some $i,j$; in this case, $K=C_{2(q+1)}$ and $\mu(K)=0$.
\item $K=M_2(P)\cap M_2(Q_i)$ for some $i$; in this case, $K=C_{q+1}\times C_{q+1}$ and $\mu(K)=0$.
\item $K=M_2(P)\cap M_3(T_i)$ with $i\in\{1,\ldots,\frac{q^2-q}{2}\}$; in this case, $K=(C_{q+1}\times C_{q+1})\rtimes C_2$ and $\mu(K)=1$. These $\frac{q^2-q}{2}$ groups are pairwise distinct.
\item $K=M_2(Q_i)\cap M_3(T_i)$ or $K=M_2(Q_{i+\frac{q^2-q}{2}})\cap M_3(T_i)$, with $i\in\{1,\ldots,\frac{q^2-q}{2}\}$; in  this case, $K=(C_{q+1}\times C_{q+1})\rtimes C_2$ and $\mu(K)=0$.
These $q^2-q$ groups are pairwise distinct.
\end{itemize}
To sum up, the only subgroups $K$ with $H<K<G$ and $\mu(K)\ne0$ are the maximal subgroups, $q+1$ distinct groups of type $E_q \rtimes C_{q^2-1}$, and $\frac{3(q^2-q)}{2}$ distinct groups of type $(C_{q+1}\times C_{q+1})\rtimes C_2$.
Thus, $\mu(H)=0$.

\item Let $H=E_q$. Let $P$ be the point of $\cH_q(\fqs)$ fixed by $H$; $H$ fixes $\ell_P$ pointwise.
We have $H=M_1(P)\cap M_2(Q_1)\cap\cdots\cap M_2(Q_{q^2})$, where $Q_1,\ldots,Q_{q^2}$ are the $\fqs$-rational points of $\ell_P\setminus\{P\}$; $H$ is not contained in any other maximal subgroup of $G$.
The intersections $K$ of at least two maximal subgroups of $G$ such that $H<K<G$ are exactly the $q^2$ groups $M_1(P)\cap M_2(Q_i)=E_q\rtimes C_{q^2-1}$, with $\mu(K)=1$.
Thus, by direct computation, $\mu(H)=0$.

\item Let $H=Sym(3)=\langle\alpha,\beta\rangle$ with $o(\alpha)=3$ and $o(\beta)=2$. Let $P\in\PG(2,q^2)\setminus\cH_q$ and $Q,R\in\cH_q$ be the fixed points of $\alpha$, and $A\in QR$ be the fixed point of $\beta$ on $\cH_q$, so that $\beta$ fixes $\ell_A=AP$.
By Lemma \ref{S3fissatriangoli} and its proof, $H=M_2(P)\cap M_3(T_1)\cap\cdots\cap M_3(T_{q+1})$, where $T_i$ has one vertex on $\ell_A\setminus\{P,A\}$ and the other two vertexes are collinear with $A$; $H$ is not contained in any other maximal subgroup of $G$.

For any $i,j\in\{1,\ldots,q+1\}$ with $i\ne j$, no vertex of $T_j$ is on a side of $T_i$; hence, no nontrivial element of $M_3(T_i)\cap M_3(T_j)$ fixes $T_i$ pointwise. This implies $M_3(T_i)\cap M_3(T_j)=H$.
Analogously, no nontrivial element in $M_3(T_i)\cap M_2(P)$ fixes $T_i$ pointwise, and this implies $M_3(T_i)\cap M_2(P)=H$.
Thus, by direct computation, $\mu(H)=q+1$.

\item Let $H=C_3=\langle\alpha\rangle$ with fixed points $P\in\PG(2,q^2)\setminus\cH_q$ and $Q,R\in\cH_q$.
By Lemma \ref{C3fissatriangoli},
$$H=M_1(Q)\cap M_1(R)\cap M_2(P)\cap M_3(T_1)\cap\cdots\cap M_3(T_\frac{q^2-1}{3})\cap M_4(\tilde{T}_1)\cap\cdots\cap M_4(\tilde{T}_{\frac{2(q^2-1)}{3}}) $$ 
and $H$ is not contained in  any other maximal subgroup of $G$.
By direct inspection, the intersections $K$ of at least two maximal subgroups of $G$ such that $H<K<G$ are exactly the following.
\begin{itemize}
\item $K=M_1(Q)\cap M_2(P)$ or $K=M_1(R)\cap M_2(P)$; in this case, $K=E_q\rtimes C_{q^2-1}$ and $\mu(K)=1$.
\item $K=M_1(Q)\cap M_1(R)$; in this case, $K=C_{q^2-1}$ and $\mu(K)=0$.
\item There are exactly $\frac{q-1}{3}$ groups $K$ containing $H$ with $K\cong Sym(3)$, and hence $\mu(K)=q+1$.
In fact, any involution $\beta\in G$ satisfying $\langle H,\beta\rangle\cong Sym(3)$ interchanges $Q$ and $R$ and fixes a point of $(QR\cap\cH_q)\setminus\{P,Q\}$; conversely, any of the $q-1$ points $A_1,\ldots,A_{q-1}$ of $(QR\cap\cH_q)\setminus\{P,Q\}$ determines uniquely the involution $\beta_i\in G$ such that $\beta(A_i)$, $\beta_i(Q)=R$, $\beta_i(R)=Q$, and hence $\langle H,\beta_i\rangle\cong Sym(3)$.
The involutions $\beta_i$, $\alpha\beta_i$, and $\alpha^2\beta_i$, together with $H$, generate the same group; thus, there are exactly $\frac{q-1}{3}$ groups $Sym(3)$ containing $H$.
\end{itemize}
Thus, by direct computation, $\mu(H)=\frac{2(q^2-1)}{3}$.

\item Let $H=C_2=\langle\alpha\rangle$, where $\alpha$ has center $P$. Let $\ell_P(\fqs)\setminus\{P\}=\{P_1,\ldots,P_{q^2}\}$. By Lemma \ref{involuzionefissatriangoli},
$$ H= M_1(P)\cap \bigcap_{i=1}^{q^2} M_2(P_i)\cap \bigcap_{i=1}^{q^2}\bigcap_{j=1}^{q/2} M_3(T_{i,j}),  $$
where the triangles $T_{i,j}$ are described in Lemma \ref{involuzionefissatriangoli}; $H$ is not contained in any other maximal subgroup of $G$.
By direct inspection, the intersections $K$ of at least two maximal subgroups of $G$ such that $H<K<G$ are exactly the following.
\begin{itemize}
\item $K=M_1(P)\cap M_2(P_i)$ for $i=1,\ldots,q^2$; in this case, $K=E_q\rtimes C_{q^2-1}$ and $\mu(K)=0$.
\item $K=M_2(P_i)\cap M_2(P_j)$ with $i\ne j$; in this case, $K=E_q$ and $\mu(K)=0$.
\item $K=M_1(P)\cap M_3(T_{i,j})$; in this case, $K=E_q\rtimes C_{2(q+1)}$ and $\mu(K)=0$.
\item $K=M_2(Q_i)\cap M_3(T_{i,j})$ with $i\in\{1,\ldots,q^2\}$ and $j\in\{1,\ldots,\frac{q}{2}\}$; these $\frac{q^3}{2}$ distinct groups are of type $(C_{q+1}\times C_{q+1})\rtimes C_2$, so that $\mu(K)=1$.
\item There are exactly $N=\frac{q^3}{2}$ groups $K$ containing $H$ such that $K\cong Sym(3)$, and hence $\mu(K)=q+1$.
This follows by double counting the size of
$$ I=\{(H,K)\mid\; H,K<G,\;H\cong C_2,\;K\cong Sym(3),\; H<K \}. $$
Arguing as in the proof of Lemma \ref{involuzione}, $|I|=(q^3+1)(q-1)N$; arguing as in the proof of Lemma \ref{S3fissatriangoli}, $|I|=\frac{q^3(q^3+1)(q-1)}{6}\cdot3$.
Hence, $N=\frac{q^3}{2}$.
\end{itemize}
Thus, by direct computation, $\mu(H)=-\frac{q^3(q+1)}{2}$.

\item Let $H=\{1\}$. Then $\mu(H)=-\sum_{\{1\}<K\leq G} \mu(K,G)$. By the values $\mu(K)$ computed in the previous cases, Propositions \ref{unaclassediconiugio}, and Proposition \ref{normalizzante}, only the following groups $K$ have to be considered:
$1$ group $G$; $q^3+1$ groups $S_2\rtimes C_{q^2-1}$; $q^2(q^2-q+1)$ groups $\PSL(2,q)\times C_{q+1}$; $\frac{q^3(q-1)(q^2-q+1)}{6}$ groups $(C_{q+1}\times C_{q+1})\rtimes Sym(3)$; $\frac{q^3(q+1)^2(q-1)}{3}$ groups $C_{q^2-q+1}\rtimes C_3$; $(q^3+1)q^2$ groups $E_q\rtimes C_{q^2-1}$; $\frac{q^3(q-1)(q^2-q+1)}{2}$ groups $(C_{q+1}\times C_{q+1})\rtimes C_2$; $\frac{q^3(q^3+1)(q-1)}{6}$ groups $Sym(3)$; $\frac{q^3(q^3+1)}{2}$ groups $C_3$; $(q^3+1)(q-1)$ groups $C_2$.
Thus, by direct computation, $\mu(H)=0$.
\end{itemize}
\end{proof}

%%%%%%%%%%%%%%%%%%%%%%%%%%%%%%%%%%%%%%%%%%%%%%%%%%%%%%%%%%%%%%%
\section{Determination of $\lambda(H)$ for any subgroup $H$ of $G$}\label{sec:lambda}

Let $n>0$, $q=2^{2^n}$, $G=\PSU(3,q)$. This section is devoted to the proof of the following theorem.

\begin{theorem}\label{risultatolambda}
Let $H$ be a proper subgroup of $G$.
Then $\lambda(H)\ne0$ if and only $H$ is one of the following groups:
\begin{equation}\label{tipologialambda}
\begin{array}{c}
E_q\rtimes C_{q^2-1},\quad (C_{q+1}\times C_{q+1})\rtimes C_2,\quad Sym(3),\quad C_3, \\
S_2\rtimes C_{q^2-1},\quad \PSL(2,q)\times C_{q+1},\quad (C_{q+1}\times C_{q+1})\rtimes Sym(3),\quad C_{q^2-q+1}\rtimes C_3,\quad C_2.
\end{array}
\end{equation}
For any isomorphism type in Equation \eqref{tipologialambda} there is just one conjugacy class of subgroups of $G$.

If $H$ is in the first row of Equation \eqref{tipologialambda}, then $\lambda(H)=-1$; if $H$ is in the second row of Equation \eqref{tipologialambda}, then $\lambda(H)=1$.
\end{theorem}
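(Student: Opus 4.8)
The plan is to obtain every value $\lambda(H)=\mu_{\bar L}([H],[G])$ from the defining recursion in the poset $\bar L$ of conjugacy classes, worked out from the top: $\lambda(G)=1$, and for a proper subgroup $\lambda(H)=-\sum_{[H]<[K]\le[G]}\lambda(K)$, the sum being over conjugacy classes lying strictly above $[H]$. Before running this recursion I would cut $\bar L$ down to the fifteen classes of Equation \eqref{tipologia}, which by Theorem \ref{risultato} are exactly the (single) conjugacy classes of intersections of maximal subgroups.

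The reduction rests on a conjugacy-class analogue of Hall's Lemma \ref{intersezione}, which I would prove first: \emph{if $H<G$ is not an intersection of maximal subgroups, then $\lambda(H)=0$}. Set $\hat H=\bigcap\{M : M\text{ maximal},\ M\ge H\}$, so that $H<\hat H<G$ and $\hat H$ is an intersection of maximals; note that any intersection of maximals $K$ equals the intersection of all maximals containing it, hence $H\le K^{g}$ forces $\hat H\le K^{g}$. Proceeding by downward induction in $\bar L$, I may assume that every class $[K]>[H]$ with $\lambda(K)\ne0$ is an intersection of maximals. For such a $K$ we have $H\le K^{g}$ for some $g$ with $K^{g}$ again an intersection of maximals, whence $[\hat H]\le[K]$; conversely $[\hat H]\le[K]$ trivially gives $[H]\le[K]$. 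Thus the nonzero-$\lambda$ classes strictly above $[H]$ are precisely those above or equal to $[\hat H]$, and $\lambda(H)=-\sum_{[\hat H]\le[K]\le[G]}\lambda(K)=0$ by the recursion applied at $[\hat H]<[G]$. This closes the induction and confines all nonzero values to the classes of \eqref{tipologia}.

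Next I would record the order relations in $\bar L$ among these fifteen classes, i.e. for each pair decide whether a conjugate of the smaller sits inside the larger. Since by Proposition \ref{unaclassediconiugio} each isomorphism type is a single conjugacy class, this reduces to testing embeddability of fixed geometric representatives, and is handled by Lemma \ref{classificazione} together with the fixed-point and self-polar-triangle analysis of Section \ref{sec:mobius}, supplemented by divisibility of orders. For instance $2\nmid|M_4(\tilde T)|$ excludes $[C_2]$ and $[Sym(3)]$ below $[M_4(\tilde T)]$, while $3\nmid(q+1)$ excludes $[C_3]$ and $[Sym(3)]$ below $[(C_{q+1}\times C_{q+1})\rtimes C_2]$; the containments $[E_q\rtimes C_{q^2-1}]\le[M_1],[M_2]$ and $[(C_{q+1}\times C_{q+1})\rtimes C_2]\le[M_2],[M_3]$ come from the intersection descriptions already established in Section \ref{sec:mobius}.

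Finally I would run the recursion downward. The four maximal classes give $\lambda=-1$; then $[E_q\rtimes C_{q^2-1}]$ lies below only $[M_1],[M_2],[G]$, giving $\lambda=1$, and $[(C_{q+1}\times C_{q+1})\rtimes C_2]$ lies below only $[M_2],[M_3],[G]$, giving $\lambda=1$; continuing through $[Sym(3)]$, $[C_3]$ and $[C_2]$ yields the remaining nonzero values, whereas $C_{q+1}\times C_{q+1}$, $C_{q^2-1}$, $C_{2(q+1)}$, $Z(M_2(P))\cong C_{q+1}$, $E_q$ and $\{1\}$ all produce cancelling sums equal to $0$. The outcome is then the column $\lambda(H)$ of Table \ref{tabella1}. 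I expect the genuine difficulty to lie not in the arithmetic of the recursion but in the determination of the order relations: pinning down, cleanly and without omission, the full incidence of conjugacy classes in $\bar L$ — in particular distinguishing ``$H\le K$'' from ``$H\le K^{g}$'' and ruling out spurious containments — which is exactly where the geometry of $\cH_q$ and the element classification of Lemma \ref{classificazione} must be used with care.
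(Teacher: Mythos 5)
Your proposal is correct, but it takes a genuinely different route from the paper. The paper never invokes a $\lambda$-analogue of Hall's lemma: its proof of Theorem \ref{risultatolambda} runs a top-down induction over \emph{all} subgroups of $G$, organized by which maximal subgroup contains $H$, and disposes of the intermediate isomorphism types one by one (subgroups of $M_4$, groups $E_{2^r}\rtimes C_d$ inside $M_1$, the various subgroups of $M_2$ via Huppert's Hauptsatz 8.27 on subgroups of $\PSL(2,q)$, subgroups $L\rtimes K$ of $M_3$, homology groups, etc.), showing directly that each of these has $\lambda=0$ before evaluating the nine surviving classes. You instead prove the reduction lemma that $\lambda(H)=0$ whenever $H$ is not an intersection of maximal subgroups, and your proof of it is sound: the key points — that an intersection of maximals $K$ satisfies $K=\hat K$, that $H\le K^g$ then forces $\hat H\le K^g$, that $[\hat H]=[H]$ is impossible by order comparison, and that the recursion at $[\hat H]<[G]$ kills the sum — all check out (this is in effect Crapo's closure theorem for the closure $[H]\mapsto[\hat H]$ on $\bar L$, and the same device appears in Pierro's treatment of the small Ree groups). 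What your route buys is a finite, transparent computation on the sixteen-class poset furnished by Theorem \ref{risultato}, re-using Section \ref{sec:mobius} wholesale; what the paper's route buys is that it needs no such lemma and, as a by-product, certifies $\lambda(K)=0$ for every intermediate type explicitly. Your recursion values are correct as stated ($\lambda=1$ for $[E_q\rtimes C_{q^2-1}]$ below exactly $[G],[M_1],[M_2]$; $\lambda=1$ for $[(C_{q+1}\times C_{q+1})\rtimes C_2]$ below exactly $[G],[M_2],[M_3]$; and the listed zero classes do cancel). The one place demanding the care you yourself flag is the incidence table: several exclusions are \emph{not} consequences of Lagrange alone and need the geometry — e.g.\ for $q=4$ one has $|C_{q^2-1}|=15$ dividing $|M_3|=150$, yet $[C_{q^2-1}]\not\le[M_3]$ (the centralizer in $C_{q+1}\times C_{q+1}$ of a $3$-cycle on the triangle is trivial); likewise $[Sym(3)]\not\le[M_1]$ (no common fixed point on $\cH_q(\mathbb{F}_{q^2})$) and $[Sym(3)]\not\le[E_q\rtimes C_{q^2-1}]$ (the Sylow $2$-subgroup there is normal) must be ruled out even though the orders divide. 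With that table completed along the lines you indicate, your argument is a complete and somewhat cleaner alternative proof.
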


\begin{proof}
By Proposition \ref{unaclassediconiugio}, for any isomorphism type in Equation \eqref{tipologialambda} there is just one conjugacy class of subgroups of $G$ of that type.
Hence, we can use the notation $[M_1]$, $[M_2]$, $[M_3]$ and $[M_4]$ for the conjugacy classes of $M_1(P)$, $M_2(P)$, $M_3(T)$ and $M_4(T)$, respectively.
If $H=G$, then $\lambda(H)=1$; if $H$ is one of the groups in the second row of Equation \eqref{tipologialambda} and $H\ne C_2$, then $\lambda(H)=-1$ as $H$ is maximal in $G$.

Firstly, we assume that $H$ is not a subgroup of $Sym(3)$, and that $H$ is not a group of homologies, i.e. $H\not\leq C_{q+1}=Z(M_2(Q))$ for any point $Q$.
\begin{itemize}
\item Let $H< M_4(T)$ for some $T$. From $H\ne C_3$ follows that some nontrivial element in $H$ fixes $T$ pointwise; hence, $H$ is not contained in any maximal subgroup of $G$ other than $M_4(T)$. Thus, inductively, $\lambda(H)=-\{\lambda(G)+\lambda(M_4(T))\}=0$.

\item Let $H< M_1(P)$ for some $P$; we assume in addition that $\gcd(|H|,q-1)>1$. Here, the assumption $H\not\leq Sym(3)$ reads $H\notin\{\{1\},C_2,C_3\}$.
If $H$ contains an element of order $4$, then $H$ is not contained in any maximal subgroup of $G$ other than $M_1(P)$. Thus, inductively, $\lambda(H)=0$.

We can then assume that the $2$-elements of $H$ are involutions, so that $H=E_{2^r}\rtimes C_{d}$ with $0\leq r\leq 2^n$ and $d\mid(q^2-1)$ (see \cite[Theorem 11.49]{HKT}).
This implies that $H\leq M_1(P)\cap M_2(Q)$ for some $Q\in\ell_P$; the eventual nontrivial elements in $H$ whose order divides $q+1$ are homologies with center $Q$.
Then we have $[H]\leq [M_1]$, $[H]\leq [M_2]$; by Lagrange's theorem, $[H]\not\leq[M_4]$.
From the assumptions $\gcd(|H|,q-1)>1$ and $H\not\leq Sym(3)$ follows $[H]\not\leq[M_3]$.
%In fact, if $r=0$, then the assumption $H\not\leq C_{q+1}=Z(M_2(Q))$ implies $\gcd(d,q-1)>1$, and hence $[H]\not\leq[M_3]$ by Lagrange's theorem.
%If $r>0$, then $H/(C_d\cap Z(M_2(Q)))$ acts faithfully on $\ell_Q\cap\cH_q$ as a subgroup of $\PSL(2,q)$, which implies $d\mid(2^r-1)$ by \cite[Hauptsatz 8.27]{Hup}; hence, from the assumption $H\ne C_2$ follows $[H]\not\leq[M_3]$.

If $H=E_q\rtimes C_{q^2-1}$, then no proper subgroup of $M_1(P)$ or $M_2(Q)$ contains $H$ properly; thus, $\lambda(H)=1$.
If $H\ne E_q\rtimes C_{q^2-1}$, then $H< E_q\rtimes C_{q^2-1}=M_1(P)\cap M_2(Q)$ up to conjugation. Thus, inductively, the only classes $[K]$ with $[H]\leq [K]$ and $\lambda(K)\ne0$ are $[K]\in\{[G],[M_1],[M_2],[E_q\rtimes C_{q^2-1}]\}$. This implies $\lambda(H)=0$.

\item Let $H<M_2(Q)$ for some $Q$, and assume also $H\not\leq M_1(P)$ for any $P$. As $H\not\leq C_3$, we have $[H]\not\leq [M_4]$. The group $\bar H:= H/(H\cap Z(M_2(Q)))$ acts as a subgroup of $\PSL(2,q)$ on $\ell_Q\cap\cH_q$; we assume in this point that $H$ is one of the following groups (see \cite[Hauptsatz 8.27]{Hup}): $\PSL(2,2^{2^h})$ with $0<h\leq n$; a dihedral group of order $2d$ where $d$ is a divisor of $q-1$ greater than $3$; $Alt(5)$.
Then, by Lagrange's theorem, $[H]\not\leq[M_3]$.
Thus, inductively, $G$ and $M_2(Q)$ are the only groups $K$ with $H<K$ and $\lambda(K)\ne0$, so that $\lambda(H)=0$.

Note that, since we are under the assumptions $H\not\leq M_1(P)$ for any $P$, $H\not\leq Sym(3)$, and $H\not\leq C_{q+1}=Z(M_2(Q))$, we have that the only subgroups $\bar H$ of $\PSL(2,q)$ for which $\lambda(H)$ still has not been computed are the cyclic or dihedral groups of order $d$ or $2d$ (respectively), where $d$ is a nontrivial divisor of $q+1$.

\item Let $H< M_3(T)$ for some $T$, and assume also $H\not\leq M_1(P)$ for any $P$. As $H\not\leq C_3$, we have $[H]\not\leq [M_4]$.
Here, the assumption $H\not\leq Sym(3)$ means that some nontrivial element of $H$ fixes $T$ pointwise. Hence, the assumption $H\not\leq C_{q+1}=Z(M_2(Q))$ for any vertex $Q$ of $T$, together with $H\not\leq M_1(P)$, implies that $H$ contains some element of type (B1).
Write $H=L\rtimes K$, with $K\leq Sym(3)$ and $L<C_{q+1}\times C_{q+1}$.

If $K=C_3$ or $K=Sym(3)$, then $[H]\not\leq[M_2]$; thus, inductively, $G$ and $M_3(T)$ are the only groups $K$ with $H<K$ and $\lambda(K)\ne0$, so that $\lambda(H)=0$.

If $K=C_2$ and $L=C_{q+1}\times C_{q+1}$, then $H\leq M_2(Q)$ for some vertex $Q$ of $T$. Since $\bar H:= H/(H\cap Z(M_2(Q)))$ is dihedral of order $2(q+1)$, \cite[Haptsatz 8.27]{Hup} implies the non-existence of groups $K$ with $H<K<M_2(Q)$ (except for $q=4$ and $\bar{K}=Alt(5)$; in this case, $\lambda(K)=0$ by the previous point).
Thus, $\lambda(H)=-\{\lambda(G)+\lambda(M_2(Q))+\lambda(M_3(T))\}=1$.

If $K=C_2$ and $L<C_{q+1}\times C_{q+1}$, then again $H\leq M_2(Q)$ with $Q$ vertex of $T$. The group $\bar H$ is dihedral of order $2d$, where $d\mid(q+1)$; $d>1$ because $L$ contains elements of type (B1).
By the previous point and \cite[Hauptsatz 8.27]{Hup}, the only groups $K$ with $H<K<M_2(Q)$ are such that $\bar K$ is dihedral of order dividing $q+1$.
Thus, inductively, $\lambda(H)=0$.

If $K=\{1\}$, then $H\in M_2(Q)$ for any vertex $Q$ of $T$.
The group $\bar H<\PSL(2,q)$ on the line $\ell_Q\cap\cH_q$ is cyclic of order $d\mid(q+1)$; $d>1$ because $H$ has elements of type (B1).
By \cite[Hauptsatz 8.27]{Hup}, the groups $K$ with $H<K< M_2(Q)$ are such that either $\bar K$ is cyclic of order dividing $q+1$, or we have already proved that $\lambda(K)=0$.
Thus, inductively, $\lambda(K)=0$.

\item Let $H<M_2(Q)$ for some $Q$. Let $\bar H\ne\{1\}$ be the induced subgroup of $\PSL(2,q)$ acting on $\ell_Q\cap\cH_q$.
If $\bar H$ is cyclic or dihedral of order $d$ or $2d$ (respectively) with $d\mid(q+1)$, then $H\leq M_3(T)$ for some $T$. Hence, $\lambda(H)=0$, as already computed in the previous point in the case $K=\{1\}$ if $\bar H$ is cyclic, or in the case $K=C_2$ if $H$ is dihedral.

\item Under the assumptions that $H\not\leq Sym(3)$ and $H$ is not a group of homologies, the only remaining case is $H<M_1(P)$ for some $P$ with $\gcd(|H|,q-1)=1$.
In this case $H=E_{2^r}\times C_d$, where $C_d$ is cyclic of order $d\mid(q+1)$ and made by homologies, whose axis passes through $P$ and whose center $Q$ lies on $\ell_P$.
We have $r>0$, because $H\not\leq Z(M_2(Q))$.

If $r=1$, then $H$ is cyclic of order $2d$ generated by an element of type (E). By Lemma \ref{involuzionefissatriangoli}, $H\leq M_3(T)$, where $T$ has a vertex in $Q$ and two vertexes on $\ell_Q$.
Hence, $[H]\leq[M_1]$, $[H]\leq[M_2]$, $[H]\leq[M_3]$, and $[H]\not\leq[M_4]$.
Let $K$ be such that $H<K\leq G$ and $K$ is not of the same type of $H$, i.e. $K$ is not cyclic of order $2d^\prime$ with $d^\prime\mid(q+1)$.
As shown in the previous points, $\lambda(K)\ne0$ if and only if $[K]\in\{[G],[M_1],[M_2],[M_3],[E_q\rtimes C_{q^2-1}],[(C_{q+1}\times C_{q+1})\rtimes C_2]\}$.
Thus, inductively, $\lambda(H)=0$.
\end{itemize}

Let $H\leq C_{q+1}=Z(M_2(Q))$ for some $Q$ and $K$ a subgroup of $G$ properly containing $H$.
As shown above, $\lambda(K)\ne0$ if and only if $[K]\in\{[G],[M_1],[M_2],[M_3],[E_q\rtimes C_{q^2-1}],[(C_{q+1}\times C_{q+1})\rtimes C_2]\}$.
Thus $\lambda(Z(M_2(Q)))=0$ and, inductively, $\lambda(H)=0$.

Let $H=Sym(3)=\langle\alpha\rangle\rtimes\langle\beta\rangle$ with $o(\alpha)=3$ and $o(\beta)=2$.
Let $P\in\PG(2,q^2)\setminus\cH_q$ and $Q,R\in\cH_q(\fqs)$ be the fixed point of $\alpha$, so that $\beta$ fixes $P$ and interchanges $Q$ and $R$.
This implies $[H]\leq[M_2]$.
By Lemma \ref{S3fissatriangoli}, $[H]\leq[M_3]$.
From the computations above and Lagrange's theorem, no class $[K]$ with $K\leq G$ other than $[G]$, $[M_2]$ and $[M_3]$ satisfies $[H]\leq[K]$ and $\lambda(H)\ne0$.
Thus, $\lambda(H)=1$.

Let $H=C_3$. By Lagrange's theorem and Proposition \ref{unaclassediconiugio}, $H<K\leq G$ and $\lambda(K)\ne0$ if and only if $[K]\in\{[G],[M_1],[M_2],[M_3],[M_4],[E_q\rtimes C_{q^2-1}],[Sym(3)]\}$. Thus, $\lambda(H)=1$.

Let $H=C_2$. By Lagrange's theorem and Proposition \ref{unaclassediconiugio}, $H<K\leq G$ and $\lambda(K)\ne0$ if and only if $[K]\in\{[G],[M_1],[M_2],[M_3],[E_q\rtimes C_{q^2-1}],[(C_{q+1}\times C_{q+1})\rtimes C_2],[Sym(3)]\}$. Thus, $\lambda(H)=-1$.

Let $H=\{1\}$.
Collecting all the classes $[K]$ with $\lambda(K)\ne0$, we have by direct computation $\lambda(H)=0$.
\end{proof}

%%%%%%%%%%%%%%%%%%%%%%%%%%%%%%%%%%%%%%%%%%%%%%%%%%%%%%%%%%%%%%%
\section{Determination of $\chi(\Delta(L_p\setminus\{1\}))$ for any prime $p$}\label{sec:euler}

Let $n>0$, $q=2^{2^n}$, $G=\PSU(3,q)$.
If $p$ is a prime number, we denote by $L_p$ the poset of $p$-subgroups of $G$ ordered by inclusion, by $L_p\setminus\{1\}$ its subposet of proper $p$-subgroups of $G$, and by $\Delta(L_p\setminus\{1\})$ the order complex of $L_p\setminus\{1\}$.
In this section we determine the Euler characteristic $\chi(\Delta(L_p\setminus\{1\}))$ of $\Delta(L_p\setminus\{1\})$ for any prime $p$, using Equation \eqref{caratteristica} and Lemma \ref{pgruppi}. The results are stated in Theorem \ref{caratteristicadieulero} and in Table \ref{tabella2}.

\begin{theorem}\label{caratteristicadieulero}
For any prime number $p$ one of the following cases holds:
\begin{itemize}
\item $p\nmid|G|$ and $\chi(\Delta(L_p\setminus\{1\}))=0$;
\item $p=2$ and $\chi(\Delta(L_2\setminus\{1\}))=q^3+1$;
\item $p\mid(q+1)$ and $\chi(\Delta(L_p\setminus\{1\}))=-\frac{q^6-2q^5-q^4+2q^3-3q^2}{3}$;
\item $p\mid(q-1)$ and $\chi(\Delta(L_p\setminus\{1\}))=-\frac{q^6+q^3}{2}$;
\item $p\mid(q^2-q+1)$ and $\chi(\Delta(L_p\setminus\{1\}))=-\frac{q^6+q^5-q^4-q^3}{3}$.
\end{itemize}
\end{theorem}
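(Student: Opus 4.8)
The plan is to compute $\chi(\Delta(L_p\setminus\{1\}))$ via Equation~\eqref{caratteristica}, namely
$$ \chi(\Delta(L_p\setminus\{1\})) = -\sum_{H\in L_p\setminus\{1\}} \mu_{L_p}(\{1\},H), $$
where the sum runs over all nontrivial $p$-subgroups of $G$. By Lemma~\ref{pgruppi}, the summand $\mu_{L_p}(\{1\},H)$ vanishes unless $H$ is elementary abelian, in which case $\mu_{L_p}(\{1\},H)=(-1)^r p^{\binom{r}{2}}$ for $H\cong E_{p^r}$. Thus the first step is to reduce the sum to a sum over \emph{elementary abelian} $p$-subgroups only, organized by rank $r$; the entire computation becomes a count, for each $r$, of the number $N_r$ of elementary abelian subgroups of $G$ of order $p^r$, after which
$$ \chi(\Delta(L_p\setminus\{1\})) = -\sum_{r\geq1} N_r\,(-1)^r p^{\binom{r}{2}}. $$
The case $p\nmid|G|$ is immediate, since then $L_p\setminus\{1\}=\emptyset$ and the sum is zero.

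The core of the work, carried out case by case on $p$, is to determine which elementary abelian $p$-subgroups occur and to count them, using Lemma~\ref{classificazione} to identify the geometric type of the relevant elements and Theorem~\ref{Hartley} together with the orbit-counting already performed in Section~\ref{sec:mobius}. \textbf{For $p=2$}, the elements of order $2$ are the elations (type (C)), and those of order $4$ (type (D)) contribute nothing by Lemma~\ref{pgruppi} since they are not involutions; the only elementary abelian $2$-subgroups are subgroups of the centers $E_q=Z(S_2)$ attached to each of the $q^3+1$ points of $\cH_q(\fqs)$, so one counts elementary abelian subgroups of a fixed $E_q$ of each rank $r\le 2^n$ and multiplies by $q^3+1$, being careful that distinct points give distinct centers. \textbf{For $p\mid(q+1)$}, the relevant $p$-elements are homologies (type (A)) and the type (B1) elements; the maximal elementary abelian $p$-subgroups sit inside the groups $C_{q+1}\times C_{q+1}$ fixing self-polar triangles, so the count reduces to counting self-polar triangles via $[G:M_3(T)]$ and then counting rank-$r$ subgroups of a fixed $C_{q+1}\times C_{q+1}$, with an inclusion--exclusion correction for the rank-$1$ homology groups $Z(M_2(Q))$ which lie in several such tori. \textbf{For $p\mid(q-1)$}, the $p$-elements are of type (B2), generating cyclic groups inside $C_{q^2-1}$; here the elementary abelian $p$-subgroups have rank at most $1$ (the Sylow $p$-subgroup of $C_{q^2-1}$ is cyclic), so only $N_1$ matters, counted from the number of pairs $\{P,Q\}\subset\cH_q(\fqs)$. \textbf{For $p\mid(q^2-q+1)$}, the $p$-elements lie in the cyclic groups $C_{q^2-q+1}$ (type (B3)), again of $p$-rank $1$, and $N_1$ is counted from the number of triangles $\tilde T\subset\cH_q(\F_{q^6})\setminus\cH_q(\fqs)$ via $[G:M_4(\tilde T)]$.

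After assembling each $N_r$, one substitutes into the alternating sum and simplifies to obtain the closed forms in the statement. As a consistency check one verifies Brown's congruence that $\chi(\Delta(L_p\setminus\{1\}))\equiv1\pmod{|G|_p}$, which the excerpt cites and which should hold automatically. \textbf{The main obstacle} I anticipate is the $p\mid(q+1)$ case: there the $p$-rank is $2$ rather than $1$, so I must correctly count the maximal tori $C_{q+1}\times C_{q+1}$ and then account for the overcounting of rank-$1$ subgroups—an individual group of homologies $Z(M_2(Q))$ of order $q+1$ lies in every maximal torus through $Q$, so a careful inclusion--exclusion (or a double-counting argument like those in Lemmas~\ref{involuzionefissatriangoli}--\ref{S3fissatriangoli}) is needed to avoid counting such subgroups with the wrong multiplicity, and this is precisely where the more intricate polynomial in the answer $-\tfrac{q^6-2q^5-q^4+2q^3-3q^2}{3}$ comes from.
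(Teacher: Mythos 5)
Your plan is essentially the paper's proof: the same reduction via Equation \eqref{caratteristica} and Lemma \ref{pgruppi}, the same geometric identification of the elementary abelian $p$-subgroups in each of the five cases, and the same counts ($q^3+1$ copies of $E_q=Z(S_2)$, one per point of $\cH_q(\fqs)$, for $p=2$; one $E_{p^2}$ and $p-2$ non-homology subgroups $C_p$ per self-polar triangle, plus the homology subgroups $C_p$, for $p\mid(q+1)$; $\binom{q^3+1}{2}$ resp.\ $[G:M_4(\tilde T)]$ rank-one subgroups in the last two cases). The only real difference is organizational, in the case $p\mid(q+1)$: you propose inclusion--exclusion over the tori $C_{q+1}\times C_{q+1}$ to count the rank-one homology subgroups, while the paper counts them directly by their centers ($q^2(q^2-q+1)$ points of $\PG(2,q^2)\setminus\cH_q$, each carrying a unique cyclic group of homologies); the two routes agree, since each torus contains exactly $3$ homology subgroups $C_p$ and each such subgroup lies in exactly $\frac{q^2-q}{2}$ tori, so your double count gives $3\cdot\frac{q^3(q^2-q+1)(q-1)}{6}\cdot\frac{2}{q^2-q}=q^2(q^2-q+1)$, as in the paper.

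One warning about your final step, where you assert both that the alternating sum ``simplifies to obtain the closed forms in the statement'' and that Brown's congruence $\chi\equiv1\pmod{|G|_p}$ ``should hold automatically'': in the two rank-one cases these two claims are incompatible. For $p\mid(q-1)$ and $p\mid(q^2-q+1)$ every contributing subgroup has order $p$ with $\mu_{L_p}(\{1\},C_p)=-1$, so Equation \eqref{caratteristica} yields $\chi=+N_1$, namely $+\frac{q^6+q^3}{2}$ and $+\frac{q^6+q^5-q^4-q^3}{3}$; this sign is forced in any case, because there $\Delta(L_p\setminus\{1\})$ is a disjoint union of $N_1$ contractible chains. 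The positive values are $\equiv1\pmod p$, whereas the negative values printed in the statement are $\equiv-1\pmod p$ and would violate Brown's congruence. So a faithful execution of your plan proves the theorem only up to these two signs: your proposed Brown check would expose a sign slip in the statement rather than confirm it (for $p\mid(q-1)$, Table \ref{tabella2} already records the positive value), and the paper's own proof makes the same last-line slip, concluding $\chi=-N_1$ immediately after establishing $\mu_{L_p}(\{1\},H)=-1$ for each of the $N_1$ contributing subgroups.
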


\begin{proof}
Since $|G|=q^3(q+1)^2(q-1)(q^2-q+1)$, $q$ is even, and $3\mid(q-1)$, the cases $p\nmid|G|$, $p=2$, $p\mid(q+1)$, $p\mid(q-1)$, and $p\mid(q^2-q+1)$ are exhaustive and pairwise incompatible.
We denote by $S_p$ a Sylow $p$-subgroup of $G$.
\begin{itemize}
\item 
Let $p\nmid|G|$. Then $\Delta(L_p\setminus\{1\})=\emptyset$, and hence $\chi(\Delta(L_p\setminus\{1\}))=\chi(\emptyset)=0$.

\item Let $p=2$.
The group $G$ has $q^3+1$ Sylow $2$-subgroups, and two of them intersect trivially; see \cite[Theorem 11.133]{HKT}. Any nontrivial element $\sigma$ of $S_2$ fixes exactly one point $P$ on $\cH_q(\fqs)$ which is the same for any $\sigma\in S_2$; $S_2$ is uniquely determined among the Sylow $2$-subgroups of $G$ by $P$. Hence, Equation \eqref{caratteristica} reads
$$ \chi(\Delta(L_2\setminus\{1\}))=-(q^3+1)\sum_{H\in L_2\setminus\{1\},\; H(P)=P} \mu_{L_2}(\{1\},H), $$
where $P$ is any fixed point of $\cH_q(\fqs)$. By Lemma \ref{pgruppi}, we only consider those $2$-groups in $M_1(P)$ which are elementary abelian.
Then we consider all nontrivial subgroups $H$ of an elementary abelian $2$-group $E_q$ of order $q$. For any such group $H=E_{2^r}$ of order $2^r$, with $1\leq r\leq 2^n$, we have $\mu_{L_2}(\{1\},H)=(-1)^r \cdot 2^{\binom{r}{2}}$ by Lemma \ref{pgruppi}. Thus,
$$ \chi(\Delta(L_2\setminus\{1\})) = -(q^3+1)\sum_{r=1}^{2^n}(-1)^r \, 2^{\binom{r}{2}}\,\binom{2^n}{r}_2 $$
where the Gaussian coefficient $\binom{2^n}{r}_2$ counts the subgroups of $E_q$ of order $2^r$. Using the property
$$\binom{2^n}{r}_2 = \binom{2^n-1}{r-1}_2 + 2^r \binom{2^n-1}{r}_2$$
we obtain
$$ \sum_{r=1}^{2^n}(-1)^r \, 2^{\binom{r}{2}}\,\binom{2^n}{r}_2 = \sum_{r=1}^{2^n}(-1)^r \, 2^{\binom{r}{2}}\,\binom{2^n-1}{r-1}_2 \,+\, \sum_{r=1}^{2^n}(-1)^r \, 2^{\binom{r}{2}+r}\,\binom{2^n-1}{r}_2 $$
$$ = \sum_{r=0}^{2^n-1}(-1)^{r+1} \, 2^{\binom{r+1}{2}}\,\binom{2^n-1}{r}_2 \,+\, \sum_{r=1}^{2^n}(-1)^r \, 2^{\binom{r+1}{2}}\,\binom{2^n-1}{r}_2$$
$$ = (-1)^0\,2^{\binom{1}{2}}\,\binom{2^n-1}{0}_2 \,+\, (-1)^{2^n}\,2^{\binom{2^n+1}{2}}\,\binom{2^n-1}{2^n}_2 =-1. $$
Thus, $\chi(\Delta(L_2\setminus\{1\}))=q^3+1$.

\item
Let $p\mid(q+1)$.
Then $S_p\leq C_{q+1}\times C_{q+1}$, and hence $S_p\cong C_{p^s}\times C_{p^s}$, where $p^s\mid(q+1)$ and $p^{s+1}\nmid(q+1)$.
Let $H$ be a subgroup of $S_p$. By Lemma \ref{pgruppi}, $\mu_{L_p}(\{1\},H)\ne0$ only if $H$ is elementary abelian of order $p$ or $p^2$; in this cases, $\mu_{L_p}(\{1\},C_p)=-1$ and $\mu_{L_p}(\{1\},C_p\times C_p)=r$.
Now we count the number of elementary abelian subgroups of order $p$ or $p^2$ in $G$.
\begin{itemize}
\item
A subgroup $E_{p^2}$ of $G$ of type $C_{p}\times C_p$ is uniquely determined by the maximal subgroup $M_3(T)$ such that $E_{p^2}$ is the Sylow $p$-subgroup of $M_3(T)$.
Hence, $G$ contains exactly $[G:N_G(M_3(T))]=\frac{q^3(q^2-q+1)(q-1)}{6}$ elementary abelian subgroups of order $p^2$.
\item
A subgroup $C_p$ made by homologies is uniquely determined by its center $P\in\PG(2,q^2)\setminus\cH_q$ of homology, because the group of homology with center $P$ is cyclic.
Hence, $G$ contains exactly $|\PG(2,q^2)\setminus\cH_q|=q^2(q^2-q+1)$ cyclic subgroups of order $p$ made by homologies.
\item
A subgroup $C_p$ which is not made by homologies is made by elements of type (B1), and fixes pointwise a unique self-polar triangle $T$.
The Sylow $p$-subgroup $C_{p}\times C_p$ of $M_3(T)$ contains exactly $3$ subgroups $C_{p}$ made by homologies, namely the groups of homologies with center one of the vertexes of $T$.
Since $C_p\times C_p$ contains $p+1$ subgroups $C_p$ altogether, $C_p\times C_p$ contains exactly $p-2$ subgroups $C_p$ not made by homologies.
Thus, the number of subgroups $C_p$ of $G$ not made by homologies is $(p-2)\cdot [G:N_G(M_3(T))]=\frac{q^3(q^2-q+1)(q-1)(p-2)}{6}$.
\end{itemize}
Thus, by direct computation,
\begin{small}
$$ \chi(\Delta(L_p\setminus\{1\}))= -\left\{ \frac{q^3(q^2-q+1)(q-1)(p-2)}{6}\cdot r + \left[q^2(q^2-q+1)+\frac{q^3(q^2-q+1)(q-1)(p-2)}{6}\right]\cdot(-1) \right\}$$
$$ = -\frac{q^6-2q^5-q^4+2q^3-3q^2}{3}. $$
\end{small}

\item Let $p\mid(q-1)$.
By Lemma \ref{classificazione}, $S_p$ is a subgroup of the cyclic group $C_{q^2-1}$ fixing two points $P,Q$ on $\cH_q(\fqs)$;
then a proper $p$-subgroup $H$ of $G$ satisfies $\mu_{L_p}(\{1\})\ne0$ if and only if $H$ has order $p$; in this case, $\mu_{L_p}(\{1\},H)=-1$.
Also, by Lemma \ref{classificazione}, any two Sylow $p$-subgroups of $G$ have trivial intersection. Then the number of subgroups $C_p$ of $G$ is equal to the number $\binom{q^3+1}{1}$ of couples of points in $\cH_q(\fqs)$; equivalently, this number is equal $[G:N_G(C_{q^2})]$, where $|N_G(C_{q^2-1})|=2(q^2-1)$ by Proposition \ref{normalizzante}.
Thus, $\chi(\Delta(L_p\setminus\{1\}))=-\frac{q^6+q^3}{2}$.

\item Let $p\mid(q^2-q+1)$.
Then $S_p\leq C_{q^2-q+1}$, and hence a proper $p$-subgroup $H$ of $G$ satisfies $\mu_{L_p}(\{1\},H)\ne0$ if and only if $H$ has order $p$; in this case, $\mu_{L_p}(\{1\},H)=-1$.
The number of subgroups $C_p$ of $G$ is equal to the number of subgroups $C_{q^2-q+1}$, and hence to the number $[G:N_G(M_4(\tilde T))]=\frac{q^3(q+1)^2(q-1)}{3}$ of maximal subgroups of type $M_4(\tilde T)$ in $G$.
Thus, $\chi(\Delta(L_p\setminus\{1\}))=-\frac{q^3(q+1)^2(q-1)}{3}=-\frac{q^6+q^5-q^4-q^3}{3}$.
\end{itemize}
\end{proof}

%%%%%%%%%%%%%%%%%%%%%%%%%%%%%%%%%%%%%%%%%%%%%%%%%%%%%%%%%%%%%%%
%%%%%%%%%%%%%%%%%%%%%%%%%%%%%%%%%%%%%%%%%%%%%%%%%%%%%%%%%%%%%%%
\section{Acknowledgements}

This research was partially supported by Ministry for Education, University and Research of Italy (MIUR) and by the Italian National Group for Algebraic and Geometric Structures and their Applications (GNSAGA - INdAM).
The author would like to thank Francesca Dalla Volta who introduced him to M\"obius functions on groups and supported him with a number of helpful conversations. Many thanks are also due to Martino Borello for his useful comments, and to Emilio Pierro for pointing out a mistake in a previous version of this paper.

\end{document}